\DeclareSymbolFont{cyrletters}{OT2}{wncyr}{m}{n}
\DeclareMathSymbol{\Sha}{\mathalpha}{cyrletters}{"58}
\numberwithin{equation}{section}
\newtheorem{lemma}{Lemma}[section]
\newtheorem{theorem}[lemma]{Theorem}
\newtheorem{proposition}[lemma]{Proposition}
\newtheorem{corollary}[lemma]{Corollary}
\newtheorem{remark}[lemma]{Remark}
\theoremstyle{definition}
\newtheorem{mydef}[lemma]{Definition}
\newtheorem{example}[lemma]{Example}
\newcommand{\Z}{\mathbb{Z}}
\newcommand{\Q}{\mathbb{Q}}
\newcommand{\R}{\mathbb{R}}
\newcommand\Vol{\mathrm{Vol}}
\newcommand\Gal{\mathrm{Gal}}
\newcommand{\res}{\textup{res}}
\title{\vspace{-\baselineskip}\sffamily\bfseries Constructing Jacobians of rank $1$}
\author[1]{Peter Koymans\thanks{Mathematisch Instituut, Universiteit Utrecht, Postbus 80.010, 3508 TA Utrecht, The Netherlands, p.h.koymans@uu.nl}}
\author[2]{Adam Morgan\thanks{Gonville and Caius College, Trinity Street, Cambridge, CB2 1TA, United Kingdom, ajm269@cam.ac.uk}}
\affil[1]{Utrecht University}
\affil[2]{University of Cambridge}
\date{\today}
\begin{document}
\maketitle

\begin{abstract}
Let $K$ be a number field, let $g \geq 1$ be an integer and let $f(x) = (x - a_1) \cdots (x - a_{2g + 1}) \in O_K[x]$ be a polynomial that splits into $2g + 1$ distinct linear factors. Write $C$ for the hyperelliptic curve given by $C: y^2 = f(x)$ and write $J = \mathrm{Jac}(C)$ for its Jacobian. Under mild technical assumptions on $f$ that are satisfied almost always, we prove that there exists some $d \in K^\times$ such that the quadratic twist $J^d$ has rank exactly equal to $1$. As a consequence, we deduce that for any  positive integer $g$, there exists an absolutely simple abelian variety over $K$ with dimension equal to $g$ and rank equal to $1$. 
\end{abstract}

\section{Introduction}
\subsection{Background and results}
An important but exceedingly difficult question in number theory is to determine the set
$$
\mathcal{R} := \{r : r = \mathrm{rk} \, E(\Q) \text{ for infinitely many non-isomorphic elliptic curves } E/\Q\}.
$$
Despite many computations and conjectures \cites{BW, Del1, Del2, DD, DW, Goldfeld, KatzSarnak, PPVW, RS1, RS2, Watkins1, Watkins2, Watkins3}, very little has been rigorously proven about the set $\mathcal{R}$. In this paper, for a number field $K$, we will be interested in understanding the more general set
$$
\mathcal{R}(K, g) := \left\{r : \begin{array}{l} r = \mathrm{rk} \, \mathrm{Jac}(C)(K) \text{ for infinitely many non-isomorphic} \\ \text{hyperelliptic curves } C/K \text{ of genus } g\end{array}\right\},
$$
so $\mathcal{R} = \mathcal{R}(\Q, 1)$. The main goal of this paper is to prove the following theorem, which implies that $1 \in \mathcal{R}(K, g)$ for all number fields $K$ and all integers $g \geq 1$.

\begin{theorem}
\label{tFirstResult}
Let $K$ be a number field and let $g \geq 1$ be an integer. Then there are infinitely many non-isomorphic hyperelliptic curves $C$ of the shape
$$
y^2 = (x - a_1) \cdots (x - a_{2g + 1})
$$
with distinct $a_1, \dots, a_{2g + 1} \in K$ such that its Jacobian $J = \mathrm{Jac}(C)$ satisfies $\mathrm{rk} \, J(K) = 1$. Moreover, the Jacobian $J$ may be taken to be absolutely simple. 
\end{theorem}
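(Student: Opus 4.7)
The plan is to deduce Theorem~\ref{tFirstResult} from the main technical result announced in the abstract, which for each $f(x) = \prod_i (x-a_i)$ satisfying the stated mild assumptions produces $d \in K^\times$ with $\mathrm{rk}(J^d(K)) = 1$. A key preliminary observation is that the quadratic twist $J^d$ is itself the Jacobian of a curve of the form demanded by the theorem: applying the substitution $(x,y) \mapsto (dx, d^{g+1}y)$ to the model $y^2 = d\,f(x)$ of $J^d$ yields the curve $y^2 = \prod(x - a_i/d)$. So each successful application of the main theorem already produces one curve of the required shape whose Jacobian has rank $1$; we are left only with the tasks of extracting an infinite pairwise non-isomorphic family and arranging that the Jacobians are absolutely simple.

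First, via Hilbert's irreducibility theorem applied to a suitable universal family, I would construct an infinite sequence of polynomials $f_n \in O_K[x]$ of degree $2g+1$ with distinct roots such that (i) the mild assumptions of the main theorem hold for each $f_n$, (ii) the Galois group of a splitting field of $f_n$ over $K$ is the full symmetric group $S_{2g+1}$, and (iii) the curves $C_n : y^2 = f_n(x)$ are pairwise geometrically non-isomorphic (for instance, by forcing their images in the coarse moduli space of genus-$g$ hyperelliptic curves to take infinitely many distinct values). All three conditions cut out a Hilbertian, hence Zariski-dense, subset of parameter space, so can be imposed simultaneously on infinitely many $f_n$.

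Applying the main theorem to each $f_n$ now produces $d_n \in K^\times$ with $\mathrm{rk}(J_n^{d_n}(K)) = 1$. For absolute simplicity, observe that $J_n^{d_n}$ is geometrically isomorphic to $J_n$, so it suffices that $J_n$ itself be absolutely simple; by Zarhin's theorem on endomorphism algebras of hyperelliptic Jacobians with a large monodromy action, condition (ii) forces $\mathrm{End}_{\overline{K}}(J_n) = \mathbb{Z}$ (for $g \geq 2$; the case $g = 1$ is trivial since elliptic curves are always absolutely simple), and hence in particular $J_n$ is absolutely simple. Pairwise non-isomorphism of the curves $C_n^{d_n}$ then follows from (iii): any $K$-isomorphism between $C_n^{d_n}$ and $C_m^{d_m}$ would pull back to a $\overline{K}$-isomorphism between $C_n$ and $C_m$, contradicting our choice.

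The real obstacle is none of the above, but the main technical construction itself — producing an explicit twist of rank exactly $1$. I would expect this to combine a controlled $2$-descent analysis, governing the $2$-Selmer rank of $J^d$ as $d$ ranges over appropriate twists in the spirit of Mazur-Rubin and its higher-genus refinements, with an explicit construction of an auxiliary $K$-rational point on the relevant $J^d$ to realise the rank lower bound, and a parity or Cassels-Tate style argument to pin the rank down at exactly $1$ rather than merely bounding it above.
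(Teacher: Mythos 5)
Your overall strategy — deduce Theorem~\ref{tFirstResult} from the main twisting result, observe that the twist $J^d$ is again the Jacobian of a curve of the required shape, then vary $f$ to get an infinite family — is a reasonable reading of the abstract. But there is a fatal incompatibility at the heart of the construction.

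\textbf{The gap.} Theorem~\ref{tFirstResult} (and the technical Theorem~\ref{tMain} it is deduced from) concerns curves $y^2=(x-a_1)\cdots(x-a_{2g+1})$ with \emph{all} $a_i\in K$ (resp.\ $O_K$); that is, $f$ splits completely over $K$, and $\mathrm{Gal}(f/K)$ is \emph{trivial}. Your condition~(ii), requiring $\mathrm{Gal}(f_n/K)\cong S_{2g+1}$, is therefore unattainable for any polynomial in the class under consideration. As a consequence the appeal to Zarhin's large-monodromy theorem to obtain $\mathrm{End}_{\overline K}(J_n)=\mathbb{Z}$ cannot even get off the ground: the only genus-$g$ hyperelliptic curves to which the rank-$1$ machine applies have full rational $2$-torsion, hence trivial $\mathrm{mod}\,2$ monodromy, and any route to absolute simplicity must be compatible with that. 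The paper instead uses a theorem of Yelton (Lemma~\ref{lem:2_structure_end}), which replaces the monodromy hypothesis by the existence of suitable multiplicative reduction places — a condition that \emph{is} compatible with split $f$, and is in fact part of the $B$-genericity hypothesis already required for the main theorem, so simplicity is obtained ``for free.''

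\textbf{Two smaller issues.} First, the ``mild assumptions'' of Theorem~\ref{tMain} (Definition~\ref{def:places_in_aux_twist}, $B$-genericity) are not cut out by polynomial conditions on the $a_i$; they require the differences $a_i-a_j$ to have prime factors with prescribed local behaviour, which is an arithmetic density-$1$ condition, not a thin-set/Hilbertian one. Hilbert irreducibility is therefore not the right tool to impose them. The paper instead observes that $100\%$ of tuples $(a_1,\dots,a_{2g+1})\in O_K^{2g+1}$ ordered by height are $B$-generic, which is what is actually needed. Second (and more cosmetically), your substitution produces $\prod(x-da_i)$, not $\prod(x-a_i/d)$; the conclusion that the twist is still of the admissible shape is nonetheless correct. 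Finally, note that Theorem~\ref{tMain} already produces \emph{infinitely many} twists $t$ with $\mathrm{rk}\,J^t(K)=1$ from a single $B$-generic curve; the paper's deduction of Theorem~\ref{tFirstResult} therefore needs only one base curve, and the family-of-$f_n$'s construction you outline is unnecessary.
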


Theorem \ref{tFirstResult} positively resolves a question of Gajovic--Park \cite{GP}*{Question 4.1}. 

We will now state a more technical, but also significantly more powerful result, that easily implies Theorem \ref{tFirstResult}. To do so, we need some notation.


\begin{mydef} 
\label{def:places_in_aux_twist}
We say that $J$ is $B$-generic if there are distinct finite places $w_1, \dots, w_{4g^2 + 2g - 1}$ and $w_1', \dots, w_{2g}'$ such that
\begin{itemize}
\item for all $1 \leq i \leq 2g$ and all $1 \leq j \leq 2g$, the place $w_{(i - 1) 2g + j}$ is multiplicative of type $\{j, 2g + 1\}$ (see Definition \ref{dMultiplicative} for the definition of multiplicative place);
\item for all $1 \leq i \leq 2g - 1$, the place $w_{4g^2 + i}$ is multiplicative of type $\{i, i + 1\}$;
\item for all $1 \leq i \leq 2g$, the place $w_i'$ is multiplicative of type $\{i, 2g + 1\}$;
\item we have $|O_K/w_i| \geq B$ for all $i$.
\end{itemize}
\end{mydef}

\begin{theorem}
\label{tMain}
Let $g \geq 1$ be an integer. Then there exists a constant $B > 0$ such that the following holds. Let $K$ be a number field and let $C$ be a hyperelliptic curve of the shape
$$
y^2 = (x - a_1) \cdots (x - a_{2g + 1})
$$
with distinct $a_1, \dots, a_{2g + 1} \in O_K$. Assume that $J = \mathrm{Jac}(C)$ is $B$-generic. Then $J$ is absolutely simple. Moreover, there exist infinitely many elements $t \in K^\times/K^{\times 2}$ such that $\mathrm{rk} \, J^t(K) = 1$.
\end{theorem}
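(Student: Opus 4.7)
The plan for Theorem \ref{tMain} splits into two largely independent parts: establishing the absolute simplicity of $J$, and producing a quadratic twist of rank exactly one.

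Absolute simplicity is a consequence of the full symmetry of the Galois action on $J[2]$. The $2g$ places $w_i'$ in Definition~\ref{def:places_in_aux_twist}, being multiplicative of type $\{i, 2g+1\}$, have inertia acting on the roots $\{a_1, \ldots, a_{2g+1}\}$ of $f$ as the transposition $(i, 2g+1)$, and these $2g$ transpositions already generate $S_{2g+1}$. Hence $\Gal(K(J[2])/K) \cong S_{2g+1}$, and via the odd-degree model $J[2]$ is identified with the quotient of $\mathbb{F}_2^{2g+1}$ by the all-ones vector: the standard $(2g)$-dimensional absolutely irreducible $\mathbb{F}_2$-representation of $S_{2g+1}$ (irreducible since $2 \nmid 2g+1$). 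For $g \ge 2$, a theorem of Zarhin on hyperelliptic Jacobians with $S_n$-Galois group then forces $\mathrm{End}_{\overline{K}}(J) = \mathbb{Z}$ and in particular absolute simplicity; the $g = 1$ case is automatic.

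For the rank statement, I would perform a two-descent on a family of quadratic twists $J^t$. The multiplicative places act as local building blocks: at a place $w$ of multiplicative type $\{i,j\}$, the image of the local Kummer map $J^t(K_w)/2 J^t(K_w) \hookrightarrow H^1(K_w, J[2])$ is one-dimensional and controlled by the class $[(a_i) - (a_j)]$, so prescribing the classes of $t$ in $K_{w_i}^\times / K_{w_i}^{\times 2}$ and $K_{w_i'}^\times / K_{w_i'}^{\times 2}$ fixes most of the local Selmer conditions explicitly. Starting from an arbitrary auxiliary twist $t_0$, a Mazur--Rubin-style argument adjusts $t_0$ by multiplying by further primes, produced via Chebotarev in the compositum of $K(J[2])$ with suitable Kummer extensions, to remove unwanted Selmer classes one at a time, until one arrives at a twist $t$ with $\dim_{\mathbb{F}_2} \Sel_2(J^t/K) = 2g+1$ --- i.e.\ only one greater than the dimension $2g$ of $J^t(K)[2] = J(K)[2]$, which is constant under quadratic twisting and maximal because all roots of $f$ are rational.

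From $\dim \Sel_2(J^t/K) = 2g+1$ the exact sequence $0 \to J^t(K)/2J^t(K) \to \Sel_2(J^t/K) \to \Sha(J^t/K)[2] \to 0$ forces $\mathrm{rk}\, J^t(K) \in \{0, 1\}$, and one excludes rank $0$ using the parity information provided by the alternating Cassels--Tate pairing on $\Sha(J^t/K)[2] / \Sha(J^t/K)[2]_{\mathrm{div}}$, in the Poonen--Stoll formulation, which under the control on local invariants afforded by $B$-genericity pins down the correct parity of $\dim \Sel_2(J^t/K) - \dim J^t(K)[2]$. Infinitely many such twists $t$ arise by running the construction with disjoint sets of auxiliary Chebotarev primes. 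The main technical obstacle I foresee is the Selmer-trimming step: at each stage of the induction one must produce an auxiliary prime whose Frobenius lies in a prescribed conjugacy class inside a carefully assembled Galois group and whose local contribution swaps exactly one undesired Selmer class off. The constant $B$ is precisely what guarantees that the necessary Chebotarev densities are non-zero and that the multiplicative places $w_i, w_i'$ are generic enough not to interact pathologically with the $2$-torsion field $K(J[2])$ and the various Kummer extensions entering these arguments.
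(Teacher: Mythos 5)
Your proposal contains two concrete gaps, the first of which is fatal.

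\textbf{Absolute simplicity.} Your argument begins from the premise that $\Gal(K(J[2])/K) \cong S_{2g+1}$, with inertia at $w_i'$ acting as the transposition $(i, 2g+1)$ on the roots. But the hypothesis of the theorem is that $a_1, \ldots, a_{2g+1} \in O_K$ are \emph{distinct elements of $O_K$}, so $f$ is split over $K$ and $J[2]$ is entirely defined over $K$; thus $\Gal(K(J[2])/K)$ is trivial. Inertia at $w_i'$ cannot permute roots that already lie in the base field. The Zarhin-style argument therefore has no chance to get started. The paper takes a genuinely different route (Lemma \ref{lem:2_structure_end}), invoking a theorem of Yelton which deduces $\mathrm{End}(J_{\overline{K}}) = \mathbb{Z}$ directly from the existence of the $2g$ multiplicative places of types $\{i, 2g+1\}$, working with the Galois representation on the full Tate module and its behaviour under semistable reduction rather than with the (trivial) representation on $J[2]$.

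\textbf{Excluding rank 0.} Even granting that a Mazur--Rubin-style trimming argument could produce a twist with $\dim_{\mathbb{F}_2} \Sel_2(J^t/K) = 2g+1$, your parity argument does not rule out rank $0$. The Cassels--Tate pairing in the Poonen--Stoll formulation is alternating on $\Sha(J^t/K)/\Sha(J^t/K)_{\mathrm{div}}$, so it constrains the parity of $\dim (\Sha[2^\infty]/\Sha[2^\infty]_{\mathrm{div}})[2]$, not of $\dim \Sha[2]$ itself. Without finiteness of $\Sha[2^\infty]$ (a conjecture), one cannot conclude that a single unwanted Selmer class must correspond to a rational point rather than to a divisible element of $\Sha$. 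The paper sidesteps this entirely: the lower bound $\mathrm{rk}\, J^t(K) \geq 1$ comes from an \emph{explicit} non-torsion rational point, constructed by choosing $t$ of the form $\rho^{-2}\gamma(c - a_1\gamma)\cdots(c - a_{2g+1}\gamma)$, which forces $C^t$ to have the visible point $(c/\gamma, \rho/\gamma^{g+1})$. Meanwhile the upper bound $\dim \Sel_2(J^t) \leq 2g+1$ is arranged by a separate Selmer computation. Crucially, producing a $t$ which is simultaneously such a product of linear forms in prime variables \emph{and} whose prime factors $q_1, \ldots, q_{2g+2}$ have prescribed quadratic residue behaviour at all of $W$ (so that adding them kills the Selmer group down to $2g+1$) is where additive combinatorics (Kai's theorem) enters, which your proposal omits altogether. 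The Mazur--Rubin / Harpaz trimming does appear in the paper, but only as an \emph{intermediate} step (Section \ref{sHarpaz}) to produce an auxiliary twist $d$ with $\dim \Sel_2(J^d) = 4g^2 + 4g - 1$ having a very specific basis, not $2g+1$; the final drop to $2g+1$ happens in one shot when the primes $q_i$ from the linear forms are adjoined.
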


If we order $(a_1, \dots, a_{2g + 1}) \in O_K^{2g + 1}$ by the maximum of their Weil heights, then it is not difficult to show that 100\% of hyperelliptic curves $C$ of the special shape
$$
y^2 = (x - a_1) \cdots (x - a_{2g + 1})
$$
satisfy the hypotheses of our theorem. In particular, Theorem \ref{tMain} implies Theorem \ref{tFirstResult}. 

For $t\in K^{\times}/K^{\times 2},$ denote by $C^t$ the quadratic twist of $C$ by $t$. The infinite order point on $J^t(K)$ that we construct in the proof of Theorem \ref{tMain} has the form $[P] - [\infty]$ for some $P \in C^t(K)$. Combined with work of Stoll \cite{MR2264661}*{Theorem 5.1} (applied with $\Gamma = \mu_2$, $G = J^t(K)$ and $T = C^t(K)$), we deduce the following result. 

\begin{corollary} 
\label{cor:stoll}
Take the notation of Theorem \ref{tMain} and suppose that $g\geq 2$. Then there are infinitely many elements $t\in K^{\times}/K^{\times 2}$ such that $C^t(K)$ contains exactly one pair of non-Weierstrass points. 
\end{corollary}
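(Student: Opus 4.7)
The strategy is the one signposted in the excerpt: combine the explicit construction of Theorem \ref{tMain} with Stoll's Theorem 5.1 \cite{MR2264661}*{Theorem 5.1}. Theorem \ref{tMain} produces infinitely many $t \in K^\times/K^{\times 2}$ with $\mathrm{rk}\, J^t(K) = 1$, and the paragraph preceding the corollary tells us that a generator of the free part of $J^t(K)$ can be taken of the form $[P_t] - [\infty]$ with $P_t \in C^t(K)$. Since $[W] - [\infty]$ is $2$-torsion for every Weierstrass point $W$, and $[P_t] - [\infty]$ has infinite order, $P_t$ is necessarily non-Weierstrass; hence $\{P_t, \iota(P_t)\}$, where $\iota$ denotes the hyperelliptic involution on $C^t$, is already one pair of non-Weierstrass rational points on $C^t$.

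It remains to rule out any further pair. Because $g \geq 2$, the Abel--Jacobi map $\jmath \colon C^t \hookrightarrow J^t$, $P \mapsto [P] - [\infty]$, is injective on points and intertwines $\iota$ with multiplication by $-1$. Thus $T := \jmath(C^t(K)) \subseteq J^t(K) =: G$ is a $\mu_2$-stable subset. Stoll's Theorem 5.1 is a general bound on the number of $\Gamma$-orbits in such a $T$ in terms of the rank of $G$ (together with a contribution coming from $2$-torsion). Applying it with $\Gamma = \mu_2$ and $\mathrm{rk}\, G = 1$, I expect the bound to collapse the number of non-Weierstrass $\mu_2$-orbits on $C^t(K)$ down to at most one, which combined with the pair constructed above gives the corollary.

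The main technical issue is to ensure that Stoll's conclusion genuinely caps the number of non-Weierstrass pairs at one for infinitely many of the twists in our family. Concretely, one must handle the $2$-torsion contribution in Stoll's inequality and control the image of $C^t(K)$ in $J^t(K)/2J^t(K)$. For the $B$-generic twists produced by Theorem \ref{tMain}, this should be extracted with little extra effort from the $2$-descent calculation already central to the proof of Theorem \ref{tMain}: the same multiplicative-place hypotheses in Definition \ref{def:places_in_aux_twist} that pin down the rank also constrain the $2$-Selmer structure finely enough to make the application of Stoll's theorem routine bookkeeping rather than a new arithmetic input.
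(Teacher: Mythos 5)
Your proposal follows the same route as the paper: produce the non-torsion point $[P_t]-[\infty]$ from a non-Weierstrass $P_t \in C^t(K)$, and apply Stoll's Theorem 5.1 with $\Gamma=\mu_2$, $G=J^t(K)$, $T=C^t(K)$ to cap the number of non-Weierstrass pairs. The extra $2$-descent bookkeeping you worry about is not needed here: Stoll's bound is a self-contained group-theoretic statement, and the only additional arithmetic input — that $J^t(K)_{\textup{tors}}$ consists of $2$-torsion so that $T\cap G_{\textup{tors}}$ is exactly the set of Weierstrass points — is already supplied by Lemma \ref{lTorsion}, not by the finer $B$-generic or Selmer structure.
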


As a second corollary of Theorem \ref{tMain}, we obtain our next result.

\begin{corollary}
\label{cAb}
Let $K$ be a number field and let $d \geq 1$ be an integer. Then there are infinitely many absolutely simple abelian varieties $A$ of dimension $d$ with $\mathrm{rk} \, A(K) = 1$.
\end{corollary}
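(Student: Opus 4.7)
The plan is to deduce Corollary~\ref{cAb} directly from Theorem~\ref{tMain} applied with $g = d$. First I set $g = d$ and let $B = B(g)$ be the constant provided by Theorem~\ref{tMain}. The observation made immediately after Theorem~\ref{tMain} — that $100\%$ of hyperelliptic curves of the required form, ordered by the maximum of the Weil heights of the $a_i$, are $B$-generic — in particular guarantees the existence of at least one such curve $C$ over $K$. Fixing such a $C$ and setting $J = \mathrm{Jac}(C)$, Theorem~\ref{tMain} supplies an infinite set $T \subseteq K^\times/K^{\times 2}$ with $\mathrm{rk}\, J^t(K) = 1$ for every $t \in T$, and also asserts that $J$ is absolutely simple. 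Absolute simplicity is preserved under quadratic twist, since $J^t_{\bar{K}} \cong J_{\bar{K}}$, so each $J^t$ is an absolutely simple abelian variety over $K$ of dimension $d$ and rank $1$.

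The remaining task, which I expect to be the main obstacle, is to show that infinitely many of the $J^t$ for $t \in T$ are pairwise non-isomorphic over $K$. I would argue this via bad reduction. Let $S$ be the finite set of places of $K$ containing those above $2$ together with the places of bad reduction of $J$. For any $v \notin S$, the quadratic twist $J^t$ has good reduction at $v$ if and only if $v(t)$ is even, so the set of places outside $S$ at which $J^t$ has bad reduction is a well-defined invariant of $[t] \in K^\times/K^{\times 2}$. Consequently, if $J^t \cong J^{t'}$ over $K$ for some $t, t' \in T$, then by twisting we obtain $J \cong J^{t/t'}$, forcing $[t/t']$ to have even valuation at every place outside $S$.

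The subgroup of $K^\times/K^{\times 2}$ consisting of classes with even valuation at every place outside $S$ is finite, as follows from finiteness of the class group of $K$ together with finiteness of $S$-units modulo squares. Hence the map $T \to \{\text{isomorphism classes of abelian varieties over } K\}$ sending $t \mapsto J^t$ has finite fibres, so its image is infinite. This produces the required infinite family of pairwise non-isomorphic absolutely simple abelian varieties of dimension $d$ and rank $1$, completing the argument. Once the finite-fibres claim of this last paragraph is in place, the remainder of the proof is essentially a repackaging of Theorem~\ref{tMain}.
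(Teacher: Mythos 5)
Your proof is correct, and it follows the route the paper has in mind: apply Theorem \ref{tMain} to a single $B$-generic Jacobian and harvest the infinitely many rank-$1$ twists. The paper states the corollary without detail, so the only substantive thing to evaluate is your pairwise-non-isomorphism argument, which is valid: the reduction ${}^g\sigma = \chi_{t/t'}(g)\cdot\sigma$ (with $\sigma := \phi_{t'}^{-1}\circ\psi\circ\phi_t$, using that $\pm 1$ is central) does show that $J^t\cong_K J^{t'}$ forces $J\cong_K J^{t/t'}$, and a quadratic twist of a good-reduction abelian variety by a character ramified at an odd place has bad reduction there, so the bad-reduction locus pins $[t/t']$ down to the finite $2$-Selmer-type group $K(S,2)$.

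One small remark: in the paper's setting there is a shorter path to non-isomorphism. Lemma \ref{lem:2_structure_end} (part of $B$-genericity) gives $\mathrm{End}(J_{\overline{K}}) = \mathbb{Z}$, hence $\mathrm{Aut}(J_{\overline{K}}) = \{\pm 1\}$, so the twists of $J$ are classified exactly by $H^1(G_K,\mu_2) = K^\times/K^{\times 2}$ and $J^t\cong_K J^{t'}$ already forces $t=t'$ in $K^\times/K^{\times 2}$ outright, with no reduction-theoretic input. Your bad-reduction argument is more elementary and would work even without the endomorphism-ring computation, which is a modest advantage in generality; the $\mathrm{Aut} = \mu_2$ argument is shorter given what the paper has already established. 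Both are fine.
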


Taking $d = 1$, this recovers \cite{KP1}*{Corollary 1.1} and \cite{Zywina3}*{Theorem 1.1}: both of these papers independently prove that $1 \in \mathcal{R}(K, 1)$ for every number field $K$.

\subsection{Related results}
Theorem \ref{tMain} falls in the same theme as a string of recent results. Although the specific details vary greatly, key to all of these results is to combine additive combinatorics with descent. 

The first result of this type, which involves only elliptic curves and additionally imposes restrictions on the number field, can be found in Koymans--Pagano \cite{KPHilbert}. The main goal of the paper \cite{KPHilbert} is to settle Hilbert's tenth problem for finitely generated rings in the negative. The same conclusion was obtained soon thereafter in an elegant short paper by Alp\"oge--Bhargava--Ho--Shnidman \cite{Bhargava}, which cleverly exploits twists of certain hyperelliptic curves to obtain a rank growth result on abelian varieties, and Zywina \cite{Zywina3}, who proves a very general and strong rank growth result building on his works \cites{Zywina, Zywina2}. Zywina \cite{Zywina2} also proves the impressive result that $2 \in \mathcal{R}(\Q, 1)$, which was extended to $2 \in \mathcal{R}(\Q(i), 1)$ by Savoie \cite{Savoie}.

Zywina's most recent rank growth result \cite{Zywina3} implies in particular that for every number field $K$ there exist infinitely many non-isomorphic elliptic curves\footnote{We remark that Mazur--Rubin \cite{MR}, predating the recent flurry of results, had already shown that $0 \in \mathcal{R}(K, 1)$ using $2$-descent.} of rank $1$ over $K$, so $1 \in \mathcal{R}(K, 1)$. Independently of Zywina and at the same time, Koymans--Pagano \cite{KP1} also proved that $1 \in \mathcal{R}(K, 1)$. In fact, Koymans--Pagano show the much stronger result that a ``generic'' elliptic curve $E$ with full rational $2$-torsion has a quadratic twist of rank $1$. 

Our current work is most closely related to the latter paper \cite{KP1} and follows the same high-level strategy and structure. Nevertheless, we emphasize that there are significant challenges in extending \cite{KP1} to hyperelliptic curves of arbitrary genus. Indeed, that work heavily relies on expressing Selmer ranks as a Markov chain process, which is possible thanks to work of Klagsbrun--Mazur--Rubin \cite{KMR}. However, as explained in the introduction to \cite{KMR}, this exploits several ``small number phenomena'' that fail for abelian varieties of dimension at least $2$. To salvage the strategy, we avoid working with Markov chains in \S \ref{ssMarkov} and instead work directly with Selmer groups of twists, which we are able to obtain sufficient control of by building on ideas of Harpaz \cite{Har19}. When specialised to genus $1$, this gives an alternative approach to \cite{KP1}*{\S3.2} which is somewhat shorter and simpler. 



Finally, we mention the recent work of Gajovic--Park \cite{GP}, which makes significant progress towards understanding several of the sets $\mathcal{R}(K, g)$. Of their many excellent results, we highlight in particular that $\{0, 1, 2\} \subseteq \mathcal{R}(K, 2)$ for every number field $K$. 

\subsection{Overview of the paper and method of proof} 
The main body of the paper consists of two sections. Section \ref{sPreliminaries} contains mostly background material and is divided in four subsections. In \S \ref{ssDescent} we recall the basic theory of $2$-descent on Jacobians of hyperelliptic curves with full rational $2$-torsion. In \S \ref{ssMult} we state some facts about places of multiplicative reduction. These places will play an important technical role in our arguments, as our Selmer group computations will heavily exploit some of their special properties. We will also state some technical lemmas in this subsection: the first lemma recalls a result of Yelton \cite{MR4316889} that lets us deduce that $J$ is absolutely simple under some mild hypotheses, while the second lemma will be used to show that the leading constant in our additive combinatorics result does not vanish. 

In \S \ref{ssQuadratic} we introduce some general theory regarding quadratic twists. At the end of this subsection, we prove a critical lemma that gives sufficiently strong control on the torsion subgroup of our Jacobian. This will ensure that the rational point that we will construct in Section \ref{sMain} is not torsion. Finally, in \S \ref{ssMarkov} we detail a rather explicit mechanism for changing Selmer ranks, based on work of Harpaz \cite{Har19} and Mazur--Rubin \cite{MR}.

The main argument of our paper can be found in Section \ref{sMain}. Our proof consists of two main ingredients. The first ingredient is executed in \S \ref{sHarpaz}. In this subsection, we construct a twist of $J$ that has a prescribed Selmer group (here we prescribe both the dimension, but also the local behavior of our Selmer elements at certain places of multiplicative reduction). This is a rather delicate argument, partly inspired by ideas of Harpaz \cite{Har19}. 

The second ingredient is a combination of additive combinatorics with descent and can be found in \S \ref{sKai}. In common with the works \cites{KPHilbert, KP1} of Koymans--Pagano, the idea is to quadratic twist $C$ to get a new curve $C'$ with equation
$$
m(n - a_1m) \cdots (n - a_{2g + 1}m) y^2 = (x - a_1) \cdots (x - a_{2g + 1}),
$$
where $n, m \in O_K$ are free parameters to be chosen later. By construction, the choice of twist ensures that $C'$ has the rational point $P = (n/m, 1/m^{g + 1})$. For all but finitely many twists, the point $[P] - [\infty] \in \mathrm{Jac}(C')(K)$ is non-torsion, hence the rank of $\mathrm{Jac}(C')$ is at least $1$. Additive combinatorics, specifically a theorem of Kai \cite{Kai}, is then used to carefully control the prime factors of 
$$
m(n - a_1m) \cdots (n - a_{2g + 1}m).
$$
This careful control is then used to simultaneously perform descent, from which we deduce that the rank is at most $1$ (and hence must be $1$). For this step, we need a rather carefully prescribed Selmer group before we add in the prime factors from additive combinatorics, and it is here that we rely on our result in \S \ref{sHarpaz} in an essential way.

\subsection*{Acknowledgements}
The first author gratefully acknowledges the support of the Dutch Research Council (NWO) through the Veni grant ``New methods in arithmetic statistics''.
\section{Preliminaries}
\label{sPreliminaries}

\subsection{Descent on Jacobians}
\label{ssDescent}
Let $C$ be the hyperelliptic curve given by the equation
\begin{equation} 
\label{eq:hyp_curve}
C: y^2 = (x - a_1) \cdots (x - a_{2g + 1}),
\end{equation}
for distinct $a_1, \dots, a_{2g + 1} \in K$. Denote by $\infty$ the point at infinity on $C$. If we write $J = \mathrm{Jac}(C)$, we have an exact sequence
\begin{equation} 
\label{Kummer_sequence}
0 \rightarrow J[2] \rightarrow J \xrightarrow{\cdot 2} J \rightarrow 0.
\end{equation}
Let $D_i = [(a_i, 0)] - [\infty]$. Then the $D_i$ span $J[2]$ with the single relation 
$$
\sum_{i = 1}^{2g + 1} D_i = 0.
$$
Denoting by $e: J[2]\times J[2] \to \mu_2$ the Weil pairing on $J[2],$ we have (cf.~\cite{MR2964027}*{Section 5.2.2})
\[
e(D_i, D_j) = 
\begin{cases}1~~&~~i=j,\\ 
-1~~&~~\textup{otherwise.} 
\end{cases}
\]
We shall frequently use this formula without further mention.

Write $\Sigma \subseteq \mu_2^{2g + 1}$ for the subspace of $\mu_2^{2g + 1}$ such that the product of all the coordinates equals $1$. Then we have an identification $J[2] \cong \Sigma$ given by
$$
D \mapsto \left(e(D, D_1), \dots, e(D, D_{2g + 1})\right).
$$
This isomorphism also has an explicit inverse given by sending a tuple $(\alpha_1, \dots, \alpha_{2g + 1}) \in \Sigma$ to $\sum_{i = 1}^{2g + 1} f(\alpha_i) D_i$, where $f: \mu_2 \cong \mathbb{F}_2$ is the unique isomorphism between these groups.

The inclusion $\Sigma \hookrightarrow \mu_2^{2g+1}$, combined with Kummer theory, induces an isomorphism from $H^1(G_K, \Sigma)$ to the subgroup of $(K^{\times}/K^{\times2})^{2g+1}$ consisting of tuples for which the product of all coordinates is a square. Combined with the isomorphism $J[2] \cong \Sigma$ above, this allows us to view elements of $H^1(G_K, J[2])$ as $2g + 1$ tuples of elements of $K^{\times}/K^{\times 2}$. Since the product of all coordinates of such a tuple is a square, any $2g$ coordinates uniquely determine the last. 


\begin{lemma} 
\label{descent_lemma}
Suppose that $D = \sum_P n_P [P] \in J$ and suppose that $D$ has no Weierstrass point in its support. Then the connecting map $\delta\colon J(K) \rightarrow H^1(G_K, J[2]) \cong H^1(G_K, \Sigma)$ is given by
$$
D \mapsto \left(\prod_P (x(P) - a_1)^{n_P}, \dots, \prod_P (x(P) - a_{2g + 1})^{n_P}\right).
$$
On the 
points $D_i\in J[2]$, we have
$$
D_i \mapsto \left(a_i - a_1, \dots, \prod_{\substack{1 \leq j \leq 2g + 1 \\ j \neq i}} (a_i - a_j), \dots, a_i - a_{2g + 1}\right),
$$
where the product $\prod_{\substack{1 \leq j \leq 2g + 1 \\ j \neq i}} (a_i - a_j)$ appears in the $i$th coordinate.
\end{lemma}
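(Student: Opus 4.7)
The plan is to exploit the classical description of $\delta$ via the Weil pairing identification $J[2] \cong \Sigma$ set up in the excerpt. The key observation I would start from is that, under this identification combined with Kummer theory, the $i$-th coordinate of $\delta(D) \in H^1(G_K, \Sigma) \hookrightarrow (K^\times/K^{\times 2})^{2g+1}$ is computed by $f_i(D) \pmod{K^{\times 2}}$ for any function $f_i$ on $C$ with $\mathrm{div}(f_i) = 2 D_i$, whenever the supports of $D$ and $\mathrm{div}(f_i)$ are disjoint. This is the standard ``evaluate a function with divisor $2T$ on $D$'' recipe underlying 2-descent, and it follows formally from the definition of the connecting map and the self-duality of $J[2]$ furnished by the Weil pairing.

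For the first formula I would simply take $f_i = x - a_i$. On the odd-degree hyperelliptic curve $C$ this function has divisor $2D_i = 2[(a_i,0)] - 2[\infty]$, with the pole of order $2$ at $\infty$ coming from the degree-$2g+1$ model. Since the support of $D$ avoids every Weierstrass point, and in particular both $(a_i, 0)$ and $\infty$, the evaluation $f_i(D) = \prod_P (x(P) - a_i)^{n_P}$ is a well-defined element of $K^\times/K^{\times 2}$, and this yields the displayed formula directly.

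The second formula is where real care is required, since the support of $D_i$ contains both the affine Weierstrass point $(a_i,0)$ and the point $\infty$. For $j \neq i$ the function $x - a_j$ is regular and nonvanishing at $(a_i, 0)$, so the only obstruction to direct evaluation is the shared pole at $\infty$. I would handle this by invoking Weil reciprocity (equivalently, the tame symbol at $\infty$): because $x - a_j$ has an even-order pole at $\infty$, the correction term is a square, and we obtain $\delta(D_i)_j = a_i - a_j$ in $K^\times/K^{\times 2}$. For the $i$-th coordinate, direct evaluation genuinely fails because $x - a_i$ vanishes at $(a_i,0)$. However, this coordinate need not be computed directly: the excerpt records that $H^1(G_K, \Sigma) \subseteq (K^\times/K^{\times 2})^{2g+1}$ consists of tuples whose product is a square, which forces the $i$-th coordinate of $\delta(D_i)$ to equal $\prod_{j \neq i}(a_i - a_j)$ modulo squares, exactly as claimed.

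The main obstacle is the local bookkeeping at $\infty$ in the $j \neq i$ case, and this is precisely where the odd-degree hyperelliptic assumption is used: it guarantees that $x - a_j$ has an even-order pole at $\infty$, so the leading-coefficient contribution is a square and can be discarded in $K^\times/K^{\times 2}$. Once this is pinned down, the remainder of the argument is a routine unwinding of Kummer theory combined with the Weil pairing identification $J[2] \cong \Sigma$.
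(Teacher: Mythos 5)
Your argument is essentially the standard ``$x - T$'' descent computation that the paper delegates to its reference (the paper's proof is simply a pointer to Schaefer's work), so the overall structure is right: identify the $j$-th coordinate of $\delta(D)$ with the evaluation of a function $f_j$ satisfying $\mathrm{div}(f_j) = 2D_j$, take $f_j = x - a_j$, evaluate directly on $D$ when supports are disjoint, and handle $D_i$ by combining a partial evaluation for the $j \neq i$ coordinates with the ``product of all coordinates is a square'' relation to pin down the $i$-th coordinate.

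There is one spot where you have glossed over a step. For $j \neq i$ you say that because $x - a_j$ has an even-order pole at $\infty$, ``the correction term is a square,'' and conclude $\delta(D_i)_j = a_i - a_j$. The even order of the pole guarantees that the regularised value $\bigl(t^2(x - a_j)\bigr)(\infty)$ is \emph{well-defined modulo squares} (changing the uniformiser $t$ rescales it by a square), but it does not by itself tell you that this value is trivial in $K^\times/K^{\times 2}$. To finish the computation you need one of the following: either compute explicitly in local coordinates at $\infty$ (with $u = 1/x$, $v = y/x^{g+1}$ one finds $v^2(x - a_j) = (1 - a_j u)\prod_l (1 - a_l u)$, which takes the value $1$ at $u = 0$); or note that the regularised value $c_j := (t^2(x - a_j))(\infty)$ is independent of $j$ (since $(x - a_j)/(x - a_k)$ has value $1$ at $\infty$), so that the relation $\prod_l (x - a_l) = y^2$ gives $c_j^{2g+1} = \square$, and $2g+1$ odd forces $c_j = \square$. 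Either patch is short, and with it your argument is a complete proof of the lemma; without it, the claim that the normalisation at $\infty$ contributes only a square is an assertion rather than a consequence of the even pole order alone.
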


\begin{proof}
See \cite{MR1829626}*{Section 4}. 
\end{proof}

\subsection{Multiplicative places and geometric endomorphism rings}
\label{ssMult}
For $i \neq j$, write $D_{ij} = D_i + D_j$. For this subsection, we suppose that $a_i\in O_K$ for all $i$, as will also be the case in Section \ref{sMain}. 

\begin{mydef} 
\label{dMultiplicative}
Let $w$ be a finite place of $K$ not dividing $2$ and let $i,j$ be distinct elements of $\{1, \dots, 2g + 1\}$. We say that $w$ is a \textit{multiplicative place of type} $\{i,j\}$ if, for all $k\neq l$, we have 
\[\textup{val}_{w}(a_k-a_l)=\begin{cases}1~~&~~\{k,l\}=\{i,j\},\\ 0~~&~~\textup{otherwise.}\end{cases}\]
\end{mydef}

\begin{remark} 
\label{rem:component_group}
If $w$ is a multiplicative place of type $\{i,j\}$, then $J$ has semistable reduction at $w$ with toric rank $1$. That is, the identity component of the special fibre of the N\'{e}ron model of $J$ over $O_{K_w}$ is an extension of an abelian variety by a dimension one torus. The component group $\Phi_w$ of the geometric special fibre of the N\'{e}ron model is isomorphic to $\mathbb{Z}/2\mathbb{Z}$. The resulting homomorphism $J(K_w)\to \mathbb{Z}/2\mathbb{Z} \cong \{\pm 1\}$, given by reducing points to $\Phi_w$, sends $P$ to $e(P,D_{ij}).$ See `Proof of Theorem 1.3 assuming Theorem 2.8' in \cite{Har19}, or \cite{MR4316889}*{Section 6} for details. 
\end{remark}

\begin{lemma} 
\label{lem:selmer_condtions_at_mult_places}
Let $w$ be a multiplicative place of type $\{i,j\}$. Let $\alpha = (\alpha_1, \dots, \alpha_{2g + 1}) \in (K_w^{\times}/K_w^{\times 2})^{2g+1}$ with product equal to $1$, which we view as an element of $H^1(G_{K_w}, J[2])$. 
\begin{itemize}
\item[(i)] Suppose that $\textup{val}_{w}(\alpha_k) \equiv 0 \bmod 2$ for all $k$. Then $\alpha\in \delta(J(K_{w}))$ if and only if $\alpha_i\alpha_j\in K_{w}^{\times 2}$. 
\item[(ii)] The subgroup $\delta(J(K_{w}))$ of $H^1(G_{K_w}, J[2])$ is generated by $H^1_{\textup{ur}}(G_{K_{w}}, J[2])\cap \delta(J(K_{w}))$ and $\delta(D_i)$. 
\item[(iii)] The image of the valuation homomorphism $\delta(J(K_{w}))\to (\mathbb{Z}/2\mathbb{Z})^{2g + 1}$ given by 
$$
\alpha \mapsto \big(\textup{val}_{w}(\alpha_k) \bmod 2 \big)_{k = 1}^{2g + 1}
$$
is one dimensional, generated by the tuple having $1$ in the $i$th and $j$th coordinates, and zeroes elsewhere.
\end{itemize}
\end{lemma}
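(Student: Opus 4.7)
My plan is to prove the three parts in the order (iii), (ii), (i). The starting observation is that, because every $a_k$ lies in $O_K$, the divisors $D_k$ are all defined over $K_w$, so $J[2] = J[2](K_w)$ carries a trivial $G_{K_w}$-action. Consequently $H^1_{\textup{ur}}(G_{K_w}, J[2]) \cong J[2]$ has $\mathbb{F}_2$-dimension $2g$, and by the standard formula for local $2$-descent in residue characteristic not $2$, $\delta(J(K_w))$ also has dimension $2g$. Under the identification $H^1(G_{K_w}, J[2]) \hookrightarrow (K_w^\times/K_w^{\times 2})^{2g+1}$, the unramified subspace is precisely the set of tuples whose coordinates all have even valuation at $w$.

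For (iii), I would compute $\delta(D_i)$ via Lemma \ref{descent_lemma} and take valuations using Definition \ref{dMultiplicative}: the $k$-th coordinate for $k \neq i$ is $a_i - a_k$, with $\textup{val}_w = 1$ iff $k = j$, and the $i$-th coordinate is $\prod_{k \neq i}(a_i - a_k)$, also with $\textup{val}_w = 1$; this gives exactly the claimed generator. For the matching upper bound I would invoke Remark \ref{rem:component_group}: since $\Phi_w \cong \mathbb{Z}/2\mathbb{Z}$, the subgroup $J^0(K_w)$ has index $2$ in $J(K_w)$, and because $J^0$ has a smooth (semiabelian) integral model with $2$ prime to the residue characteristic, multiplication by $2$ is surjective on $J^0(K_w^{\textup{ur}})$, so $\delta(J^0(K_w)) \subseteq H^1_{\textup{ur}}(G_{K_w}, J[2])$. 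Combined with the nontrivial valuation of $\delta(D_i)$ this forces $\delta(J(K_w)) \cap H^1_{\textup{ur}} = \delta(J^0(K_w))$ to have codimension exactly one in $\delta(J(K_w))$, so the image of the valuation map is one-dimensional. Part (ii) then drops out immediately, since $\delta(D_i)$ represents the unique nontrivial coset of $\delta(J(K_w))$ modulo its unramified subgroup.

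Part (i) is the most delicate and is where I would expect to spend the most effort. The goal is to identify $\delta(J^0(K_w))$ with $\{\alpha \in H^1_{\textup{ur}}(G_{K_w}, J[2]) : \alpha_i \alpha_j \in K_w^{\times 2}\}$. Both spaces have $\mathbb{F}_2$-dimension $2g - 1$ (the latter imposes a single nontrivial linear condition on the $2g$-dimensional unramified $H^1$), so it suffices to verify one containment. Given $P \in J^0(K_w)$, I would lift $P$ to $R \in J^0(K_w^{\textup{ur}})$ with $2R = P$, available by the surjectivity invoked above, so that $\delta(P)$ is represented by the cocycle $\sigma \mapsto \alpha_\sigma := \sigma R - R$ taking values in $J^0[2]$. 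Using Remark \ref{rem:component_group} together with the Weil-pairing computation $e(D_k, D_{ij}) = e(D_k, D_i)\, e(D_k, D_j) = 1$ for all $k \neq i, j$, one identifies $J^0[2]$ with $\langle D_k : k \neq i, j\rangle$, the kernel of $e(-, D_{ij})$ on $J[2]$. Finally, by Kummer theory the class $\alpha_i \alpha_j \in K_w^\times/K_w^{\times 2}$ corresponds to the cocycle $\sigma \mapsto e(\alpha_\sigma, D_i)\, e(\alpha_\sigma, D_j) = e(\alpha_\sigma, D_{ij})$, which vanishes identically since $\alpha_\sigma \in J^0[2]$, so $\alpha_i \alpha_j \in K_w^{\times 2}$. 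The main hurdle is the Kummer-theoretic interpretation of individual coordinates of $\delta(P)$ as Weil pairings against the $D_k$'s, together with the verification that the cocycle indeed lands in $J^0[2]$; once these are secured, the remaining steps amount to bilinear algebra on $J[2]$.
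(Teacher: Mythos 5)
Your argument is correct, but it decomposes the problem differently from the paper. The paper does (i) first, via the isomorphism $H^1_{\textup{ur}}(G_{K_w}, J) \cong H^1(G_{k_w}, \Phi_w)$ from Milne's \emph{Arithmetic Duality Theorems}, Prop.\ I.3.8, which reduces the unramified Selmer condition to whether $\alpha(\textup{Frob}_w)$ dies in $\Phi_w$, i.e.\ whether $e(\alpha(\textup{Frob}_w),D_{ij})=1$; parts (ii) and (iii) then drop out by dimension counting. You instead work in the order (iii), (ii), (i), keying everything off the inclusion $\delta(J^0(K_w))\subseteq H^1_{\textup{ur}}$, which you obtain by $2$-dividing inside $J^0(K_w^{\textup{ur}})$; for (i) you push this further: the cocycle $\sigma\mapsto\sigma R - R$ built from $R\in J^0(K_w^{\textup{ur}})$ with $2R=P$ takes values in $J^0[2]=\ker e(\,\cdot\,,D_{ij})$ by Remark~\ref{rem:component_group} and the Weil-pairing computation you record, so the Kummer class $\alpha_i\alpha_j$ is trivial on the nose, and a dimension count finishes. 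The underlying structural facts are the same --- Milne's isomorphism is itself proved from the cohomological triviality of $J^0(K_w^{\textup{ur}})$, which rests on the very $2$-divisibility you invoke --- so your version essentially opens the paper's black box and constructs unramified Selmer classes by hand, which is slightly longer but more self-contained. The one step you should spell out fully in a write-up is the $2$-divisibility of $J^0(K_w^{\textup{ur}})$: it uses that $[2]$ is \'etale on the identity component of the N\'eron model over $O_{K_w}$ since $w\nmid 2$, that $[2]$ is surjective on the connected semiabelian geometric special fibre, and then strict henselianity of $O_{K_w^{\textup{ur}}}$.
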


\begin{proof}
(i): The long exact sequence for cohomology associated to the sequence \eqref{Kummer_sequence} shows that $\alpha\in \delta(J(K_{w}))$ if and only if $\alpha$ has trivial image in $H^1(G_{K_w}, J)$. Since $\alpha$ is unramified, its image in $H^1(G_{K_w}, J)$ lies in $H^1_{\textup{ur}}(G_{K_w}, J)$. By \cite{MR2261462}*{Proposition I.3.8}, reduction to the special fibre of the N\'{e}ron model induces an isomorphism 
\[
H^1(\textup{Gal}(K_{w}^\textup{ur}/K_w), J(K_w^{\textup{ur}}))\cong H^1(G_{k_w},\Phi_{w}),
\]
where $k_w$ denotes the residue field at $w$. 

Since the Galois action on $\Phi_w\cong \mathbb{Z}/2\mathbb{Z}$ is trivial, evaluation at the Frobenius element $\textup{Frob}_w$ gives an isomorphism $H^1(G_{k_w},\Phi_{w})\cong \Phi_{w}.$ Thus $\alpha$ lies in $\delta(J(K_{w}))$ if and only if $\alpha(\textup{Frob}_w)\in J[2]$ has trivial image in $\Phi_w.$ By Remark \ref{rem:component_group}, this is the case if and only if $e(\alpha(\textup{Frob}_w), D_{ij}) = 1$. Using the discussion surrounding Lemma \ref{descent_lemma} to translate this to a condition on the coordinates $\alpha_k$ gives the part.

(ii): By (i), the group $H^1_{\textup{ur}}(G_{K_{w}}, J[2]) \cap \delta(J(K_{w}))$ corresponds to the group of $2g + 1$ tuples $(\alpha_1, \dots, \alpha_{2g+1})$ for which $\textup{val}_{w}(\alpha_k)$ is even for all $k$, and for which (the product of all coordinates is a square and) $\alpha_i\alpha_j$ is a square. This space is $2g - 1$ dimensional. Since $w$ is odd, we have (see e.g. \cite{MR1370197}*{Proposition 3.9})  
\[
\dim \delta(J(K_{w})) = \dim J(K_w)/2J(K_w)= \dim J[2] = 2g.
\] 
From Lemma \ref{descent_lemma} it follows that the $w$-adic valuation of the $i$th and $j$th coordinates of $\delta(D_i)$ is odd. In particular, $\delta(D_i) \notin H^1_{\textup{ur}}(G_{K_{w}}, J[2])$. Counting dimensions gives (ii). 

(iii): follows from (ii) and the description of $\delta(D_i)$ afforded by Lemma \ref{descent_lemma}. 
\end{proof}

\begin{remark} \label{rem:weil_pairing_interpretation}
We can rephrase Lemma \ref{lem:selmer_condtions_at_mult_places}(i) as follows: if $\alpha$ is unramified at $w$, then we have $\alpha \in \delta(J(K_w))$ if and only if the quadratic character $e(\alpha,D_{ij})$ has trivial restriction to $G_{K_{w}}$.
\end{remark}

The following result of Yelton ensures that the Jacobians we work with are absolutely simple.

\begin{lemma} 
\label{lem:2_structure_end}
Suppose there exists a set $\{w_1, \dots, w_{2g}\}$ of places of $K$ such that, for each $i \in \{1, \dots, 2g\}$,   $w_i$ is multiplicative of type $\{i, 2g + 1\}$. Then $\textup{End}(J_{\overline{K}}) = \mathbb{Z}$. In particular, $J$ is absolutely simple. 
\end{lemma}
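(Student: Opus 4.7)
The plan is to derive this as an essentially immediate consequence of the main result of \cite{MR4316889}. Yelton's theorem gives a criterion for the geometric endomorphism ring of a hyperelliptic Jacobian to equal $\mathbb{Z}$ in terms of the existence of a suitable collection of multiplicative places. Concretely, one thinks of each pair $\{i,j\}$ appearing as the type of a multiplicative place as an edge in a graph on the vertex set $\{1,\ldots,2g+1\}$, and the criterion asks that the edges from the hypothesised places form a sufficiently "connected" configuration, roughly containing a spanning tree.

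The first step is therefore to check the combinatorial hypothesis: the $2g$ edges $\{1,2g+1\},\{2,2g+1\},\ldots,\{2g,2g+1\}$ coming from the places $w_1,\ldots,w_{2g}$ form a star with centre $2g+1$, which is a spanning tree on $\{1,\ldots,2g+1\}$. Secondly, one verifies that the reduction information at each $w_i$ -- namely, semistable reduction with toric rank $1$ and vanishing cycle indexed by $\{i,2g+1\}$, as recorded in Remark \ref{rem:component_group} -- matches the precise input Yelton's theorem takes. This is the only substantive check: at heart the monodromy operators (or Picard--Lefschetz transvections) associated to these vanishing cycles generate enough of the symplectic structure on the Tate module that any $\overline{K}$-endomorphism commuting with all of them must be a scalar, forcing $\mathrm{End}(J_{\overline{K}}) \otimes \mathbb{Q}=\mathbb{Q}$, and hence $\mathrm{End}(J_{\overline{K}})=\mathbb{Z}$ since the endomorphism ring is a torsion-free $\mathbb{Z}$-module.

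Once $\mathrm{End}(J_{\overline{K}})=\mathbb{Z}$ is established, absolute simplicity is a standard consequence. If $J$ were isogenous over $\overline{K}$ to a non-trivial product $A_1 \times A_2$, the projection onto $A_1$ would give a non-trivial idempotent in $\mathrm{End}(J_{\overline{K}})\otimes \mathbb{Q}$, contradicting the fact that this ring is $\mathbb{Q}$.

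The main (and really only) obstacle is bookkeeping: one must make sure that the conventions of Definition \ref{dMultiplicative} -- excluding places above $2$, demanding valuation exactly $1$ for the marked pair and $0$ elsewhere -- exactly fit the hypotheses in \cite{MR4316889}*{Section 6} that describe the component group and associated vanishing cycle. No additional input is needed beyond Yelton's theorem, so the proof proposal reduces to ``cite \cite{MR4316889} after checking that the star configuration $\{i,2g+1\}_{i=1}^{2g}$ meets his combinatorial hypothesis''.
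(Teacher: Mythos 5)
Your proposal is correct and takes essentially the same approach as the paper: the paper's proof is simply the citation to \cite{MR4316889}*{Theorem 4.1} together with the proof of \cite{MR4316889}*{Corollary 1.4}, and your discussion of the star configuration $\{i,2g+1\}_{i=1}^{2g}$ and the reduction data at the $w_i$ is just the check that the hypotheses of that result match those of Definition \ref{dMultiplicative}. The extra exposition about Picard--Lefschetz transvections and the idempotent argument for absolute simplicity is sound but not part of the paper's (one-line) proof.
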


\begin{proof}
This is \cite{MR4316889}*{Theorem 4.1}, combined with the proof of \cite{MR4316889}*{Corollary 1.4}. 
\end{proof}

When we apply our additive combinatorics machinery later, it will be important that the resulting linear forms can take enough values modulo $w$ for certain multiplicative places $w$  for $J$. We shall accomplish this through our next result. 

\begin{lemma}
\label{lFiniteField}
Let $n \in \Z_{\geq 1}$. Then there exists $B > 0$ such that the following holds for all prime powers $q > B$. 

Let $\alpha_1, \dots, \alpha_n \in \mathbb{F}_q^\times$ and $\beta_1, \dots, \beta_n \in \mathbb{F}_q$. Define for each $1 \leq i \leq n$ the affine linear form
$$
L_i(X) = \alpha_i X + \beta_i.
$$
We assume that $\alpha_i \beta_j - \alpha_j \beta_i \neq 0$ for all distinct $i$ and $j$. Let $\epsilon_1, \dots, \epsilon_n \in \mathbb{F}_q^\times$. Then there exist $\mu \in \mathbb{F}_q$ and $z_1, \dots, z_n \in \mathbb{F}_q^\times$ such that $L_i(\mu) = \epsilon_i \cdot z_i^2$ for all $i$.
\end{lemma}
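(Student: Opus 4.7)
The plan is to express the count of valid $\mu$ as a character sum and apply the Weil bound for character sums evaluated along polynomials. First I would dispose of the characteristic $2$ case: when $q$ is a power of $2$, every element of $\mathbb{F}_q^\times$ is a square, so it suffices to choose $\mu$ avoiding the at most $n$ zeros of the $L_i$, which is possible whenever $q > n$.

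For odd $q$, let $\chi$ denote the quadratic character on $\mathbb{F}_q$, extended by $\chi(0) = 0$. The condition $L_i(\mu) \in \epsilon_i \cdot (\mathbb{F}_q^\times)^2$ is detected by the indicator
\[
\tfrac{1}{2}\bigl(\chi(L_i(\mu))^2 + \chi(\epsilon_i)\chi(L_i(\mu))\bigr),
\]
which vanishes at the zeros of $L_i$. Writing $T$ for the set of $\mu \in \mathbb{F}_q$ with $L_i(\mu) \neq 0$ for all $i$ (so $|T| \geq q - n$), and $N$ for the number of $\mu$ satisfying the required conditions, I observe that on $T$ each factor $\chi(L_i(\mu))^2$ equals $1$, and expanding the product of indicators yields
\[
N = \frac{1}{2^n} \sum_{S \subseteq \{1,\dots,n\}} \chi\Bigl(\prod_{i \in S} \epsilon_i\Bigr) \sum_{\mu \in T} \chi\Bigl(\prod_{i \in S} L_i(\mu)\Bigr),
\]
with the $S = \emptyset$ term producing a main term of size $|T|/2^n \geq (q-n)/2^n$.

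For nonempty $S$, the hypothesis $\alpha_i \beta_j - \alpha_j \beta_i \neq 0$ for $i \neq j$ guarantees that the $L_i$ have pairwise distinct roots, so $\prod_{i \in S} L_i(X) \in \mathbb{F}_q[X]$ is squarefree of degree $|S|$. The Weil bound for character sums of squarefree polynomials then gives $\bigl|\sum_{\mu \in \mathbb{F}_q} \chi(\prod_{i \in S} L_i(\mu))\bigr| \leq (|S|-1)\sqrt{q}$, and passing from this sum to the sum over $T$ introduces an error of at most $n$. Summing over the $2^n - 1$ nonempty subsets,
\[
N \geq \frac{1}{2^n}\Bigl(q - n - 2^n\bigl((n-1)\sqrt{q} + n\bigr)\Bigr),
\]
which is strictly positive once $q$ exceeds some threshold depending only on $n$. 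Taking $B$ larger than this threshold and than the characteristic $2$ threshold completes the argument.

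The only real obstacle is having the appropriate squarefree-polynomial version of the Weil bound at hand; the squarefreeness of $\prod_{i \in S} L_i$ follows immediately from the non-proportionality hypothesis, and the rest of the argument is a routine expansion of the indicator and collection of error terms.
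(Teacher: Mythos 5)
Your proof is correct and follows essentially the same route as the paper: restrict to the set where all $L_i(\mu)\neq 0$, detect the required square-class conditions by expanding a product of quadratic-character indicators, and bound the nontrivial terms by the Weil bound for the squarefree polynomial $\prod_{i\in S}L_i$. The one point where you are slightly more careful than the paper is the separate treatment of characteristic $2$: the paper invokes ``the unique surjective homomorphism $\mathbb{F}_q^\times\to\{\pm 1\}$'', which exists only for odd $q$, whereas you correctly observe that in characteristic $2$ every element is already a square (this gap in the paper's proof is harmless, since the lemma is applied only at places of odd residue characteristic).
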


\begin{proof}
Denote by $\chi: \mathbb{F}_q^\times \rightarrow \{\pm 1\}$ the unique surjective homomorphism. Define $Z$ to be the set of $\mu \in \mathbb{F}_q$ such that $L_i(\mu) = 0$ for some $i$. Then $|Z| = n$, and we need to show that
$$
\sum_{\mu \in \mathbb{F}_q - Z} \prod_{i = 1}^n \left(1 + \chi(\epsilon_i L_i(\mu))\right) > 0.
$$
But we have
$$
\sum_{\mu \in \mathbb{F}_q - Z} \prod_{i = 1}^n \left(1 + \chi(\epsilon_i L_i(\mu))\right) = q - n + \sum_{\varnothing \subset S \subseteq \{1, \dots, n\}} \chi\left(\prod_{i \in S} \epsilon_i\right) \sum_{\mu \in \mathbb{F}_q - Z} \chi\left(\prod_{i \in S} L_i(\mu)\right).
$$
By our assumptions $\alpha_i \neq 0$ and $\alpha_i \beta_j - \alpha_j \beta_i \neq 0$, the equation $y^2 = \prod_{i \in S} L_i(x)$ defines a geometrically irreducible curve for $S\neq \varnothing$. By the Weil conjectures applied to this curve, each inner sum is bounded by $\ll_n q^{1/2}$. Therefore the lemma follows upon choosing a sufficiently large $B > 0$.
\end{proof}

\subsection{Quadratic twists}
\label{ssQuadratic}
Let $A$ be a principally polarised abelian variety over $K$. For a quadratic character $\chi: G_K \to \mu_2$, we denote by $A^\chi$ the quadratic twist of $A$ by $\chi$. This is an abelian variety over $K$, equipped with a $\overline{K}$-isomorphism $\phi:A\to A^\chi$ such that, for all $\sigma \in G_K$, the composition $\phi^{-1}\circ \sigma \phi \sigma^{-1}$ is multiplication by $\chi(\sigma)$. In particular, $A$ is isomorphic to $A^\chi$ over the quadratic extension cut out by $\chi$, and $A$ is absolutely simple if and only if $A^\chi$ is. The principal polarisation on $A$ descends via $\phi$ to a principal polarisation on $A^\chi$, via which we view $A^\chi$ as a principally polarised abelian variety. Moreover, $\phi$ induces an isomorphism of $G_K$-modules $A[2] \cong A^\chi[2]$ identifying the corresponding Weil pairings (see e.g.~\cite{MR3951582}*{Section 4.5} for details of the above). We will always identify $A^\chi[2]$ with $A[2]$ via $\phi$. In this way, for a field extension $F/K$, we view the connecting map $A^\chi(F) \to H^1(G_F, A^\chi[2])$ as taking values in $H^1(G_F, A[2])$. Similarly, we view both of the $2$-Selmer groups $\textup{Sel}_2(A)$ and $\textup{Sel}_2(A^\chi)$ as subgroups of $H^1(G_K, A[2])$. 

If $d \in K^{\times}$ (or $K^{\times}/K^{\times 2}$), then we  write $\chi_d \in H^1(G_K, \mu_2)$ for the quadratic character given by Kummer theory. We will write both $A^d$ and $A^{\chi_d}$ interchangeably. 

Given $x\in H^0(G_K, A[2])$, we can map $x$ to $H^1(G_K, A[2])$ via the connecting map $\delta: A(K) \to H^1(G_K, A[2])$, the result of which we denote $\delta(x)$. Alternatively, having identified $A[2]$ with $A^\chi[2]$, we can map $x$ to $H^1(G_K, A[2])$ via the connecting map $A^\chi(K) \to H^1(G_K, A^\chi[2])$, the result of which, to avoid ambiguity, we denote $\delta_\chi(x)$ (or $\delta_d(x)$ if $\chi = \chi_d$). 

The relationship between $\delta(x)$ and $\delta_\chi(x)$ is described by the following basic result.

\begin{lemma} 
\label{changes_under_twist_lemma} 
For all $x \in H^0(G_K, A[2])$, we have $\delta_\chi(x) = \delta(x) + \chi \cup x$.
\end{lemma}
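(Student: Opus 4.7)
My plan is to unwind both $\delta(x)$ and $\delta_\chi(x)$ at the cocycle level using an explicit $\overline{K}$-lift of $x$ through multiplication by $2$, and then read off the difference directly from the defining relation between $\phi$ and the Galois action.

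First I would choose $\bar{x} \in A(\overline{K})$ with $2\bar{x} = x$, so that the cocycle representing $\delta(x)$ coming from the Kummer sequence for $A$ is $\sigma \mapsto \sigma\bar{x} - \bar{x}$. On the twisted side, since $\phi$ is $\overline{K}$-linear, the point $\phi(\bar{x}) \in A^\chi(\overline{K})$ satisfies $2\phi(\bar{x}) = \phi(x)$, and under the fixed identification $A^\chi[2] = A[2]$ the element $\phi(x) \in A^\chi(K)$ is precisely $x$. Hence, viewing $\delta_\chi(x)$ as living in $H^1(G_K, A[2])$ via $\phi^{-1}$, it is represented by the cocycle $\sigma \mapsto \phi^{-1}\bigl(\sigma\phi(\bar{x}) - \phi(\bar{x})\bigr)$.

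The heart of the argument is then to rewrite $\sigma\phi(\bar{x})$ using the defining identity $\phi^{-1}\circ\sigma\phi\sigma^{-1} = [\chi(\sigma)]$, which is equivalent to $\sigma\phi(b) = \chi(\sigma)\cdot\phi(\sigma b)$ for every $b \in A(\overline{K})$. Applying this with $b = \bar{x}$ and using that $\phi$ is a group homomorphism, I obtain
\begin{equation*}
\delta_\chi(x)(\sigma) \;=\; \chi(\sigma)\cdot\sigma\bar{x} - \bar{x},
\end{equation*}
so that $\delta_\chi(x)(\sigma) - \delta(x)(\sigma) = (\chi(\sigma) - 1)\,\sigma\bar{x}$. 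A short case-check on $\chi(\sigma) \in \{\pm 1\}$, using that $2\sigma\bar{x} = x$ and that $-x = x$ in $A[2]$, shows that this difference vanishes when $\chi(\sigma) = 1$ and equals $x$ when $\chi(\sigma) = -1$. Under the standard identification $\mu_2 \cong \Z/2\Z$ sending $-1 \mapsto 1$, this is exactly the cocycle $(\chi \cup x)(\sigma)$ for the natural pairing $\mu_2 \otimes A[2] \to A[2]$, and the claimed identity in $H^1(G_K, A[2])$ follows.

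I do not foresee any real obstacle here; the only delicate bookkeeping is propagating the purely $\overline{K}$-level identification $A^\chi[2] = A[2]$ through the cocycle computation. The point is that $\phi$ is not $G_K$-equivariant, and its failure to be so is governed precisely by $\chi$ — this failure is exactly what converts the single Kummer coboundary into the sum $\delta(x) + \chi \cup x$.
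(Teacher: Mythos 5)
Your proof is correct and follows essentially the same argument as the paper: pick a half of $x$ under multiplication by $2$, compute the twisted Kummer cocycle via $\phi$, and use the defining relation $\phi^{-1}\circ\sigma\phi\sigma^{-1} = [\chi(\sigma)]$ to reduce to the untwisted cocycle plus a correction term, then case-check on $\chi(\sigma)$. The only cosmetic difference is that the paper names the half $y\in A[4]$ while you write $\bar x\in A(\overline K)$ with $2\bar x = x$, which is the same thing.
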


\begin{proof}
Let $y \in A[4]$ be such that $2y=x$. For all $\sigma \in G_K$, we have
\[
\delta_\chi(x)(\sigma) = \phi^{-1}(\sigma \phi(y)-\phi(y))=\chi(\sigma)\sigma y - y =
\begin{cases}
\delta(x)(\sigma)~~&~~\chi(\sigma) = 1,
\\ \delta(x)(\sigma)+x~~&~~\chi(\sigma) = -1,
\end{cases}
\]
from which the result follows.
\end{proof}


\begin{example} 
\label{ex:hyp_curves} 
Let $C$ be the hyperelliptic curve given by \eqref{eq:hyp_curve}. For $d\in K^{\times}$, we denote by $C^d$ the quadratic twist of $C$ corresponding to the character $\chi=\chi_d$, which is given by the equation\footnote{Alternatively, by the equivalent equation $dy^2 = (x - a_1) \cdots (x - a_{2g+1})$.}
\[
C^d: y^2 = (x - da_1) \cdots (x - da_{2g+1}).
\]
The Jacobian of $C^d$ is $J^d$. Indeed, the $\overline{K}$-isomorphism $f: C \to C^d$ given by $(x, y) \mapsto (dx, \sqrt{d}d^{g}y)$ induces a $\overline{K}$-isomorphism $f_*: J \to \textup{Jac}(C^d)$, which satisfies the required identity $f_*^{-1}\circ \sigma f_* \sigma^{-1} = \chi_d(\sigma)$ for all $\sigma \in G_K$ (as follows from the fact that the hyperelliptic involution $(x,y)\mapsto (x,-y)$ on $C$ induces multiplication by $-1$ on $J$). The resulting isomorphism $J[2]\cong J^d[2]$ identifies $D_i$ with $[(da_i,0)]-[\infty]$. The fact that this identifies the Weil pairings can now be seen from the explicit formula given in Section \ref{ssDescent}. Note that the isomorphism $J[2]\cong J^d[2]$ is compatible with the isomorphisms $J[2]\cong \Sigma$ and $J^d[2]\cong \Sigma$ given in Section \ref{ssDescent}. Consequently, the explicit description of the connecting map given in Lemma \ref{descent_lemma} is compatible with quadratic twisting. In particular, one sees from that lemma that $\delta_d(D_i)$ corresponds to the tuple
\[
\Bigg(d(a_i - a_1), \dots, \prod_{\substack{1 \leq j \leq 2g + 1 \\ j \neq i}} (a_i - a_j), \dots, d(a_i - a_{2g + 1})\Bigg),
\]
which recovers Lemma \ref{changes_under_twist_lemma} in this case. 
\end{example}

\begin{remark} 
\label{rem:unramf_twist_of_mult}
In the notation of Example \ref{ex:hyp_curves}, suppose that $a_i\in O_K$ for all $i$. Let $w$ be a multiplicative place of type $\{i, j\}$ for $J$ and let $\chi$ be a quadratic character unramified at $w$. By strong approximation, we can find a non-zero element $d \in O_K$ with $\chi = \chi_d$ and $\textup{val}_w(d) = 0$. For this choice of $d$, it follows from the above discussion that $w$ is also a multiplicative place of type $\{i, j\}$ for $J^d$. (One could give a more intrinsic definition of multiplicative place that makes this true without carefully choosing the element $d$ such that $\chi=\chi_d$, but we opt not to do this to avoid complicating Definition \ref{dMultiplicative}.) 
\end{remark}

The next two lemmas give some basic, and largely well-known, results about how the local images $\delta(A(K_v))$ vary under quadratic twist.

\begin{lemma} 
\label{lem:restriction_inertia}
Let $v$ be a finite place of $K$ not dividing $2$. Suppose that $A$ has good reduction at $v$ and $\chi$ is ramified at $v$.  Then the map $\delta_\chi:A(K_v^{\textup{ur}})[2]\to H^1(G_{K^{\textup{ur}}_v},A[2])$ is injective. Moreover, we have $\delta(A^\chi(K_v))=\delta_\chi(A(K_v)[2]).$
\end{lemma}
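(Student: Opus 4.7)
Both statements will follow from Lemma \ref{changes_under_twist_lemma} combined with two key consequences of the hypotheses that $A$ has good reduction at $v$ and $v \nmid 2$: first, the Galois module $A[2]$ is unramified at $v$, so $G_{K_v^{\textup{ur}}}$ acts trivially on $A[2]$ (and in particular $A(K_v^{\textup{ur}})[2] = A[2]$); second, $A(K_v^{\textup{ur}})$ is $2$-divisible, because multiplication by $2$ is étale on the N\'{e}ron model of $A$, hence surjective on $\mathcal{O}_{K_v^{\textup{ur}}}$-points.

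For the injectivity of $\delta_\chi$ on $A(K_v^{\textup{ur}})[2]$, the $2$-divisibility of $A(K_v^{\textup{ur}})$ forces the untwisted connecting map to satisfy $\delta(x)|_{G_{K_v^{\textup{ur}}}} = 0$ for all such $x$. Thus Lemma \ref{changes_under_twist_lemma} reduces the claim to showing that $(\chi \cup x)|_{G_{K_v^{\textup{ur}}}}$ is nonzero whenever $x \neq 0$. Using the trivial action, $H^1(G_{K_v^{\textup{ur}}}, A[2]) = \Hom(G_{K_v^{\textup{ur}}}, A[2])$, and $\chi \cup x$ corresponds to the homomorphism $\sigma \mapsto \chi(\sigma) \cdot x$. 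This is nontrivial for $x \neq 0$ since $\chi$ is ramified at $v$, so $\chi|_{G_{K_v^{\textup{ur}}}} \neq 1$.

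For the equality $\delta(A^\chi(K_v)) = \delta_\chi(A(K_v)[2])$, the inclusion $\supseteq$ is clear from the definitions. Because $\delta$ factors through an injection $A^\chi(K_v)/2 A^\chi(K_v) \hookrightarrow H^1(G_{K_v}, A[2])$, equality of images is equivalent to surjectivity of the natural map $A^\chi(K_v)[2] \to A^\chi(K_v)/2A^\chi(K_v)$. Since $v \nmid 2$, the standard local formula for abelian varieties gives $|A^\chi(K_v)/2 A^\chi(K_v)| = |A^\chi(K_v)[2]|$, so surjectivity is equivalent to injectivity. Proving this injectivity, i.e.\ that no nonzero $P \in A^\chi(K_v)[2]$ is $2$-divisible in $A^\chi(K_v)$, is the main technical obstacle.

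To handle it, I would pass to the ramified quadratic extension $L_v = K_v(\sqrt{d})$ corresponding to $\chi = \chi_d$, over which we may identify $A^\chi(K_v)$ with $\{Q \in A(L_v) : \tau Q = -Q\}$, where $\tau$ generates $\Gal(L_v/K_v)$. Suppose such $Q$ satisfies $2Q = P$. Since $L_v/K_v$ is totally ramified, $k_{L_v} = k_v$ and $\tau$ reduces to the identity on $k_v$, so reducing against the good reduction $\mathcal{A}/k_v$ gives $\overline{Q} = -\overline{Q}$, i.e.\ $\overline{Q} \in \mathcal{A}(k_v)[2]$, and thus $\overline{P} = 2\overline{Q} = 0$. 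Therefore $P$ lies in the kernel of reduction of $A$, which is a pro-$p$ formal group with $p \neq 2$, hence has no nontrivial $2$-torsion, forcing $P = 0$ as required.
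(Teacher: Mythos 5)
Your proof is correct. The first half — showing the untwisted $\delta$ vanishes on restriction to $G_{K_v^{\textup{ur}}}$ and invoking Lemma~\ref{changes_under_twist_lemma} together with ramification of $\chi$ — is essentially the same as the paper's (you argue via $2$-divisibility of $A(K_v^{\textup{ur}})$, the paper via $A[4]$ being unramified; these are equivalent here). The second half, however, takes a genuinely different route. The paper simply observes that $\delta_\chi(A(K_v)[2])\subseteq \delta(A^\chi(K_v))$ and that both have dimension $\dim A(K_v)[2]$, where the injectivity of $\delta_\chi$ on $A(K_v)[2]$ needed for the dimension count is inherited directly from the first part (since $A(K_v)[2]\subseteq A(K_v^{\textup{ur}})[2]$). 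You instead reduce the assertion to injectivity of $A^\chi(K_v)[2]\to A^\chi(K_v)/2A^\chi(K_v)$ and establish it from scratch via the ramified quadratic extension $L_v$, equivariance of reduction to the special fibre, and the fact that the formal group has no $2$-torsion. This is a valid and more hands-on argument, but note that the kernel of $A^\chi(K_v)[2]\to A^\chi(K_v)/2A^\chi(K_v)$ is precisely $\ker\bigl(\delta_\chi\big|_{A(K_v)[2]}\bigr)$, so you are in effect re-proving a special case of the injectivity you already established in the first half; the paper's route is shorter precisely because it recycles that injectivity rather than re-deriving it. Your approach does have the small virtue of being self-contained and making the geometry behind the claim explicit.
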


\begin{proof}
Since $A$ has good reduction at $v$, the  $G_{K_v}$-module $A[4]$ is unramified. In particular, the connecting map $\delta:A(K_v^{\textup{ur}})[2]\to H^1(G_{K^{\textup{ur}}_v},A[2])$ is trivial. The injectivity of $\delta_\chi:A(K_v^{\textup{ur}})[2]\to H^1(G_{K^{\textup{ur}}_v},A[2])$ now follows from Lemma \ref{changes_under_twist_lemma} and the fact that $\chi$ is ramified at $v$.   

Since $v$ does not divide $2$, we have 
$$
\dim \delta^\chi(A(K_v)) = \dim A^\chi(K_v)/2A^\chi(K_v) = \dim A(K_v)[2].
$$
By the first part, the map $\delta_\chi:A(K_v)[2]\to H^1(G_{K_v},A[2])$ is injective, since it is so after restriction to $G_{K^{\textup{ur}}_v}$. Thus $\delta_\chi(A(K_v)[2])$ is a subgroup of $\delta(A^\chi(K_v))$ of full dimension. 
\end{proof}

\begin{lemma} 
\label{lema:Selmer_conditions}
Let $v$ be a finite place of $K$ not dividing $2$. Suppose that $A$ has good reduction at $v$ and let $\chi: G_{K_v} \rightarrow \mathbb{F}_2$ be a quadratic character.  

\begin{itemize}
\item[(i)] We have $\delta(A(K_v)) = H^1_{\textup{ur}}(G_{K_v}, A[2])$. 
\item[(ii)] If $\chi$ is unramified at $v$, then $\delta(A^\chi(K_v)) = \delta(A(K_v))$.
\item[(iii)] If $\chi$ is ramified at $v$, then $\delta(A^\chi(K_v)) \cap \delta(A(K_v)) = 0$.
\end{itemize}
\end{lemma}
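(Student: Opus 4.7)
All three parts follow from the good-reduction hypothesis at $v \nmid 2$, and I would tackle them in order, with (iii) being where the real work lies.

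For (i), N\'eron--Ogg--Shafarevich ensures that $A[2]$ is an unramified $G_{K_v}$-module, so $\delta(A(K_v)) \subseteq H^1_{\textup{ur}}(G_{K_v},A[2])$ automatically. Since $v \nmid 2$, a standard fact (analogous to the computation in the proof of Lemma \ref{lem:selmer_condtions_at_mult_places}(ii)) gives $\dim_{\FF_2} \delta(A(K_v)) = \dim_{\FF_2} A(K_v)/2A(K_v) = \dim_{\FF_2} A(K_v)[2]$. On the other side, $H^1_{\textup{ur}}(G_{K_v},A[2]) = H^1(G_{k_v}, A[2])$, and since $G_{k_v}$ is procyclic acting on a finite discrete module, this $H^1$ has $\FF_2$-dimension equal to $\dim_{\FF_2} A[2]^{G_{k_v}} = \dim_{\FF_2} A(K_v)[2]$. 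The two sides agree, so the inclusion is an equality.

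For (ii), an unramified $\chi$ preserves good reduction, so (i) applied to $A^\chi$ yields $\delta(A^\chi(K_v)) = H^1_{\textup{ur}}(G_{K_v},A^\chi[2])$. The identification $A^\chi[2]\cong A[2]$ from Section \ref{ssQuadratic} then matches the right-hand side with $H^1_{\textup{ur}}(G_{K_v},A[2]) = \delta(A(K_v))$.

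For (iii), which is the main obstacle, take $x \in \delta(A^\chi(K_v)) \cap \delta(A(K_v))$. Lemma \ref{lem:restriction_inertia} lets me write $x = \delta_\chi(T)$ for some $T \in A(K_v)[2]$, and Lemma \ref{changes_under_twist_lemma} then gives $x = \delta(T) + \chi \cup T$. Since $\delta(T) \in \delta(A(K_v))$, the difference $\chi \cup T = x - \delta(T)$ lies in $\delta(A(K_v)) = H^1_{\textup{ur}}(G_{K_v},A[2])$ by (i). Restricting to the inertia subgroup $I_v$ and using that $I_v$ acts trivially on $A[2]$ (so that every $1$-coboundary in $H^1(I_v,A[2])$ vanishes), the representing cocycle $\sigma \mapsto \chi(\sigma) T$ must be identically zero on $I_v$. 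Since $\chi$ is ramified, $\chi|_{I_v}$ is non-zero, so $T = 0$ and hence $x = 0$. The delicacy is precisely in this last step: triviality of the $I_v$-action on $A[2]$ (equivalent to good reduction, as $v \nmid 2$) is what lets me convert ``unramified in $H^1$'' into ``zero as a cocycle on $I_v$'', and from there extract $T = 0$ from the ramification of $\chi$.
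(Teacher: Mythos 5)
Your proof is correct and follows essentially the same route as the paper: part (ii) is identical, and in part (iii) you are in effect unrolling the injectivity argument from Lemma \ref{lem:restriction_inertia} (via Lemma \ref{changes_under_twist_lemma} and the fact that $\chi|_{I_v}$ is non-trivial) rather than invoking it as a black box. The only slight imprecision is in (i): the inclusion $\delta(A(K_v)) \subseteq H^1_{\textup{ur}}(G_{K_v},A[2])$ does not follow merely from $A[2]$ being unramified, but from the stronger standard fact (good reduction plus $v\nmid 2$) that $A(K_v^{\textup{ur}})$ is $2$-divisible, so that the Kummer connecting map vanishes over $K_v^{\textup{ur}}$; the paper simply cites this.
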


\begin{proof}
The first part is standard. See e.g.~\cite{PR}*{Remark 4.12 and Proposition 4.13}. The second part follows from (i) and the fact that,  when $\chi$ is unramified, the quadratic twist $A^\chi$ has good reduction at $v$ also. For the final part, combining (i) with Lemma \ref{lem:restriction_inertia} gives 
\[
\delta(A^\chi(K_v))\cap \delta(A(K_v))=\delta_\chi(A(K_v)[2])\cap H^1_{\textup{ur}}(G_{K_v},A[2]).
\]
Lemma \ref{lem:restriction_inertia} shows that no non-zero element of $\delta_\chi(A(K_v)[2])$ has trivial restriction to $G_{K_v^{\textup{ur}}}$, so the latter intersection is trivial. (See \cite{MR3519097}*{Lemma 4.3} for an alternative proof of this part.)
\end{proof}

In order to guarantee that the rational point that we will construct is not torsion, we will use our next result.

\begin{lemma}[Torsion lemma]
\label{lTorsion}
For all but finitely many twists $d\in K^\times/K^{\times 2}$, we have $A^d(K)_{\textup{tors}} = A^d[2]$.
\end{lemma}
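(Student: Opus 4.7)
The plan is to use the $\overline{K}$-isomorphism $\phi: A \to A^d$ to translate the condition $P \in A^d(K)$ into a condition on a corresponding point of $A(\overline{K})$. Specifically, the cocycle relation $\phi^{-1} \circ \sigma \phi \sigma^{-1} = \chi_d(\sigma)$ implies that $P \in A^d(K)$ if and only if $Q := \phi^{-1}(P) \in A(\overline{K})$ satisfies $\sigma Q = \chi_d(\sigma) Q$ for all $\sigma \in G_K$. Since $\chi_d$ takes values in $\{\pm 1\}$, both of which act trivially on $A[2]$, the map $\phi$ restricts to a $G_K$-equivariant isomorphism $A[2] \cong A^d[2]$, so the problem reduces to showing that for all but finitely many $d$ the group $A^d(K)$ contains no torsion point of order strictly greater than $2$.

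Suppose then that $Q \in A(\overline{K})_{\textup{tors}}$ has order greater than $2$ and corresponds under $\phi$ to a point of $A^d(K)$, so $\sigma Q = \chi_d(\sigma) Q$ for all $\sigma \in G_K$. Since $Q \neq -Q$, there is a well-defined homomorphism $\epsilon : G_K \to \{\pm 1\}$ given by $\sigma Q = \epsilon(\sigma) Q$, and the condition $\sigma Q = \chi_d(\sigma) Q$ forces $\epsilon = \chi_d$. In particular, the class of $d$ in $K^\times/K^{\times 2}$ is uniquely determined by $Q$, and $Q$ is defined over the quadratic extension $K(\sqrt{d})$. Consequently there is a well-defined injection from the set of ``bad'' twists
\[
\mathcal{B} := \{d \in K^\times/K^{\times 2} : A^d(K)_{\textup{tors}} \neq A^d[2]\}
\]
into
\[
\mathcal{P} := \{Q \in A(\overline{K})_{\textup{tors}} : \textup{ord}(Q) > 2 \text{ and } [K(Q):K] \leq 2\}.
\]

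It remains to show that $\mathcal{P}$ is finite. Every torsion point of $A$ has N\'{e}ron--Tate height zero, and hence its Weil height with respect to any projective embedding of $A$ is bounded by a constant depending only on $A$. Moreover, each element of $\mathcal{P}$ has degree at most $2[K:\mathbb{Q}]$ over $\mathbb{Q}$. By Northcott's theorem, the set of points of $A(\overline{K})$ of bounded Weil height and bounded degree over $\mathbb{Q}$ is finite, so $\mathcal{P}$ is finite and the lemma follows. The step I expect to require the most care is the twist bookkeeping at the start: $\phi$ is $G_K$-equivariant on $A[2]$ but not on higher-order torsion, and one must correctly package this into the injection $\mathcal{B} \hookrightarrow \mathcal{P}$; once that is set up, the finiteness of $\mathcal{P}$ is an immediate consequence of Northcott's theorem applied to torsion on $A$.
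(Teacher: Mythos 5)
Your argument is correct, and it is a genuinely different route from the one in the paper. You package the problem as: every bad twist $d$ is witnessed by some $Q\in A(\overline K)_{\textup{tors}}$ of order $>2$ with $\sigma Q=\chi_d(\sigma)Q$, and since $Q\neq -Q$ the character $\chi_d$ (hence $d\bmod K^{\times 2}$) is recovered from $Q$, so distinct bad twists have distinct witnesses; finiteness of the witness set then follows from Northcott applied to torsion points of degree at most $2[K:\mathbb Q]$. This is clean and self-contained, and the ``bookkeeping'' you flagged is handled correctly: $\phi=\phi_d$ depends on $d$, but what matters is only that some witness $Q_d$ exists in $\mathcal P$ and that $d\mapsto\chi_d=\epsilon_{Q_d}$ is recoverable from $Q_d$, which gives injectivity on the level of cardinality.

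The paper's proof is structured quite differently. It first shows $A(L)_{\textup{tors}}$ is finite for $L$ the maximal multiquadratic extension of $K$, by reducing at two places of good reduction and coprime residue characteristics and using Krasner's lemma to see that the local multiquadratic extension is finite; it then uses the isogeny $\prod_\chi A^\chi\to\textup{Res}_{L_n/K}A$ of $2$-power degree to conclude that only finitely many twists carry nontrivial prime-to-$2$ torsion, and finally invokes Lemma \ref{lem:restriction_inertia} to rule out order-$4$ points in twists ramified at a good odd place. Your approach trades the local-fields-plus-Weil-restriction machinery for the theory of N\'eron--Tate heights and Northcott's theorem; it is shorter and treats the prime-to-$2$ and $2$-primary torsion uniformly, whereas the paper's argument stays within the algebraic/reduction-theoretic toolkit already used elsewhere in Section \ref{ssQuadratic} and has the minor virtue of being in principle effective.
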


\begin{proof}
Let $L$ be the maximal multiquadratic extension of $K$. We claim that $A(L)_{\textup{tors}}$ is finite. To see this, pick a nonarchimedean place $v$ of $K$ such that $A$ has good reduction at $v$. Let $\overline{v}$ be a place of $L$ extending $v$. By Krasner's lemma, the completion $L_{\overline{v}}$ of $L$ at $\overline{v}$ is a finite extension of $K_v$. In particular, it has finite residue field $\mathbb{F}$, say of characteristic $p$. Since $A$ has good reduction at $v$, the natural reduction map on points gives an injection $A(L_{\overline{v}})_{\textup{tors}}'\hookrightarrow \overline{A}(\mathbb{F})$, where $\overline{A}$ denotes the special fibre of the N\'{e}ron model of $A$ over $O_{K_v}$, and the prime in the superscript denotes prime-to-$p$ torsion. It follows that the prime-to-$p$ torsion in $A(L)$ is finite. Choosing a second place of good reduction and residue characteristic different to $p$ completes the proof of the claim. 

Now let $d_1, \dots, d_n$ be distinct elements of $K^\times/K^{\times 2}$. For each prime $l \neq 2$ the natural map 
\[
\bigoplus_{i=1}^n A^{d_i}(K)[l^\infty] \to A(L_n) \subseteq A(L)
\]
is injective, where $L_n$ denotes the subextension of $L/K$ given by adjoining the square roots of each $d_i$. This follows, for example, from the existence of an isogeny 
\[
\prod_{\chi}A^\chi \to \textup{Res}_{L_n/K}A
\]
of degree a power of $2$, where the product runs over all characters of $\textup{Gal}(L_n/K)$, and $\textup{Res}_{L_n/K}$ denotes restriction of scalars from $L_n$ to $K$, so that $(\textup{Res}_{L_n/K} A)(K) = A(L_n)$. See \cite{MR2462119}*{Proposition 5.3} for details. Combining this with the claim above, we see that only finitely many twists of $A$ have non-trivial prime-to-$2$ torsion. 

Finally, let $d\in K^{\times}/K^{\times 2}$, and suppose that $\chi_d$ is ramified at a prime $\mathfrak{p}\nmid 2$ of good reduction for $A$ (as is the case for all but finitely many $d$). It follows from Lemma \ref{lem:restriction_inertia} that $A^d(K_{\mathfrak{p}}^{\textup{ur}})[2^\infty] = A^d(K_{\mathfrak{p}}^{\textup{ur}})[2].$  Indeed, injectivity of the map $\delta_d: A(K_{\mathfrak{p}}^{\textup{ur}})[2]\to H^1(G_{K_{\mathfrak{p}}^{\textup{ur}}}, A[2])$ implies that $A^d$ has no elements of order $4$ defined over $K_{\mathfrak{p}}^{\textup{ur}}$. This gives the result.
\end{proof}

\subsection{Variation of Selmer groups}
\label{ssMarkov}
Let $A$ be a principally polarised abelian variety over $K$ and let $g = \dim A$. Let $\chi$ and $\chi'$ be quadratic characters and let $X$ be a finite set of places. Write 
\[
V_{X}^\chi = \textup{im}\left(\textup{Sel}_2(A^\chi) \stackrel{\oplus_{v\in X}\res_v}{\longrightarrow} \bigoplus_{v\in X} \frac{\delta(A^\chi(K_v))}{\delta(A^\chi(K_v))\cap \delta(A^{\chi'}(K_v))}\right),
\]
and define $V_{X}^{\chi'}$ similarly by interchanging $\chi$ and $\chi'$ in the displayed equation. 

The following result is due to Harpaz, building on work of Mazur--Rubin \cite{MR}*{Section 3}.

\begin{lemma} 
\label{lem:mazur_rubin_harpaz}
Suppose that $\delta(A^\chi(K_v))=\delta(A^{\chi'}(K_v))$ for all $v\notin X$. Then 
\[
\dim \textup{Sel}_2(A^{\chi'}) = \dim \textup{Sel}_2(A^{\chi}) + \dim V_X^{\chi'} - \dim V_X^\chi.
\]
Moreover, we have 
\[
\dim V_X^\chi + \dim V_X^{\chi'} \leq \sum_{v \in X} \dim \frac{\delta(A^\chi(K_v))}{\delta(A^\chi(K_v)) \cap \delta(A^{\chi'}(K_v))}.
\]
\end{lemma}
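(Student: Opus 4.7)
The plan is to follow the Mazur--Rubin dualising strategy, combined with global Poitou--Tate duality. For each place $v$ of $K$ set $L_v^\chi := \delta(A^\chi(K_v))$ and $L_v^{\chi'} := \delta(A^{\chi'}(K_v))$; using the principal polarisation to identify $A[2]$ with its own Cartier dual, both are maximal isotropic subspaces of $H^1(G_{K_v}, A[2])$ for the local Tate pairing. Introduce the auxiliary Selmer groups $\textup{Sel}^{\min}$ and $\textup{Sel}^{\max}$ inside $H^1(G_K, A[2])$ cut out, respectively, by the local conditions $L_v^\chi \cap L_v^{\chi'}$ and $L_v^\chi + L_v^{\chi'}$. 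The hypothesis $L_v^\chi = L_v^{\chi'}$ for $v \notin X$ implies that the kernel of the restriction map $\textup{Sel}_2(A^\chi) \to V_X^\chi$ is precisely $\textup{Sel}^{\min}$, and the same is true with $\chi$ and $\chi'$ interchanged. This immediately yields
\[
\dim \textup{Sel}_2(A^\chi) = \dim \textup{Sel}^{\min} + \dim V_X^\chi,
\]
and the analogue for $\chi'$; subtraction gives the first displayed identity.

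For the inequality, consider the image $W$ of $\textup{Sel}^{\max}$ in $\bigoplus_{v \in X} (L_v^\chi + L_v^{\chi'})/(L_v^\chi \cap L_v^{\chi'})$. Both $V_X^\chi$ and $V_X^{\chi'}$ embed into $W$ via the evident inclusions, and their images intersect trivially coordinate-by-coordinate (an element of both is represented by classes that lie in $L_v^\chi \cap L_v^{\chi'}$ at each $v \in X$, hence vanish in the quotient), so $V_X^\chi \oplus V_X^{\chi'} \hookrightarrow W$. The local Tate pairing descends to a non-degenerate pairing on each quotient $(L_v^\chi + L_v^{\chi'})/(L_v^\chi \cap L_v^{\chi'})$, using the identity $(U \cap V)^\perp = U^\perp + V^\perp$ together with the maximal isotropy of $L_v^\chi$ and $L_v^{\chi'}$. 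Applying global Poitou--Tate duality (equivalently, Wiles' formula) to the pair of Selmer structures $(L^\chi \cap L^{\chi'},\, L^\chi)$ then yields
\[
\dim W \;=\; \dim \textup{Sel}^{\max} - \dim \textup{Sel}^{\min} \;=\; \sum_{v \in X} \dim L_v^\chi/(L_v^\chi \cap L_v^{\chi'}),
\]
where the vanishing of the contribution from $v \notin X$ uses $L_v^\chi = L_v^{\chi'}$ there. Combined with $V_X^\chi \oplus V_X^{\chi'} \hookrightarrow W$, this gives the desired inequality.

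The only genuinely nontrivial step is the invocation of Poitou--Tate duality in the final computation, which is what ensures $W$ attains its maximal possible dimension; everything else is bookkeeping with the two self-dual Selmer structures $L^\chi$ and $L^{\chi'}$, and ultimately relies only on the fact that the local conditions defining the two quadratic twists are maximal isotropic for the local Tate pairing.
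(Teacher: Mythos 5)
Your proof is correct and follows essentially the same route as the cited result: the paper simply defers to Harpaz's Lemma 3.27 (which in turn builds on Mazur--Rubin), and the underlying argument there is precisely the comparison of the minimal and maximal Selmer structures $\textup{Sel}^{\min}$, $\textup{Sel}^{\max}$ cut out by the local conditions $L_v^\chi\cap L_v^{\chi'}$ and $L_v^\chi+L_v^{\chi'}$, together with global Poitou--Tate (Wiles' formula) and the maximal isotropy of the two local images. You have reconstructed that argument accurately; the only cosmetic imprecision is that the phrase ``the pair of Selmer structures $(L^\chi\cap L^{\chi'},\,L^\chi)$'' would be clearer if stated as applying Wiles' formula to $\textup{Sel}^{\min}$ and comparing with the self-dual structure $\textup{Sel}_2(A^\chi)$, the archimedean and $2$-adic Euler-factor contributions cancelling in that comparison.
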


\begin{proof}
This is \cite{Har19}*{Lemma 3.27}, combined with the displayed equation immediately preceding that result.
\end{proof}

\begin{remark}  
\label{rmk:typical_application}
In Lemma \ref{lem:mazur_rubin_harpaz}, suppose that $X$ consists entirely of finite places $v$ not dividing $2$. Suppose also that, for all places $v$ in $X$, $A$ has good reduction,  and $\chi$ is unramified while $\chi'$ is ramified. From Lemma \ref{lema:Selmer_conditions} we see that $V_X^\chi$ is the image of $\textup{Sel}_2(A^\chi)$ in $\bigoplus_{v\in X}H^1(G_{K_v},A[2])$. Moreover, since $\dim \delta(A^\chi(K_v)) = \dim A(K_v)[2]$ for all $v\in X$, we have  
\[
\dim V_X^\chi + \dim V_X^{\chi'} \leq 2g|X|.
\]
\end{remark}

While the situation described in Remark \ref{rmk:typical_application} will be sufficient for most purposes, in Section \ref{sHarpaz} below  we will  apply Lemma \ref{lem:mazur_rubin_harpaz} when $X$ contains a carefully chosen multiplicative place. The following lemma will facilitate this. See  \cite{Har19}*{Lemma 3.25} for a generalisation of this result to arbitrary abelian varieties. 

\begin{lemma} 
\label{lHarpazChange}
Let $C$ be the hyperelliptic curve given by equation \eqref{eq:hyp_curve}, let $J = \textup{Jac}(C)$ and suppose that $a_1, \dots a_{2g + 2} \in O_K$. Let $1 \leq i < j \leq 2g + 1$ and let $w$ be a multiplicative place of type $\{i, j\}$. Suppose that $\chi$ and $\chi'$ are quadratic characters unramified at $w$ such that precisely one of $\res_{w}(\chi)$ and $\res_{w}(\chi')$ is non-trivial. Then we have 
\begin{equation} 
\label{eq:intersectin_multiplicative}
\delta(J^\chi(K_w)) \cap \delta(J^{\chi'}(K_w)) = \delta(J^\chi(K_w)) \cap H^1_{\textup{ur}}(G_{K_{w}}, J[2]).
\end{equation}
Moreover, we have 
\begin{equation} 
\label{eq:dimension_multiplicative}
\dim \frac{\delta(J^\chi(K_w))}{\delta(J^\chi(K_w)) \cap \delta(J^{\chi'}(K_w))} = 1.
\end{equation}
\end{lemma}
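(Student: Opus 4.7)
The plan is to describe $\delta(J^{\chi}(K_w))$ and $\delta(J^{\chi'}(K_w))$ using Lemma \ref{lem:selmer_condtions_at_mult_places}, observe that their unramified parts coincide, and then use Lemma \ref{changes_under_twist_lemma} (or equivalently the explicit formula in Example \ref{ex:hyp_curves}) to exhibit an element of $\delta(J^{\chi'}(K_w))$ that does not lie in $\delta(J^\chi(K_w))$. A short dimension count will then force the intersection to equal the common unramified subspace, which simultaneously yields both \eqref{eq:intersectin_multiplicative} and \eqref{eq:dimension_multiplicative}.

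More precisely, I would first use Remark \ref{rem:unramf_twist_of_mult} to pick representatives $d, d' \in O_K$ with $\chi = \chi_d$, $\chi' = \chi_{d'}$, and $\textup{val}_w(d) = \textup{val}_w(d') = 0$, so that $w$ remains a multiplicative place of type $\{i, j\}$ for both $J^d$ and $J^{d'}$. The hypothesis that exactly one of $\res_w(\chi)$, $\res_w(\chi')$ is non-trivial is then equivalent to $d/d'$ being a non-square unit at $w$. Applying Lemma \ref{lem:selmer_condtions_at_mult_places}(i) to each of $J^\chi$ and $J^{\chi'}$ yields identical criteria: an unramified tuple $\alpha$ lies in $\delta(J^\chi(K_w))$ (respectively $\delta(J^{\chi'}(K_w))$) if and only if $\alpha_i\alpha_j$ is a square. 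Hence the two unramified subgroups
\[
\delta(J^\chi(K_w)) \cap H^1_{\textup{ur}}(G_{K_w}, J[2]) \quad \text{and} \quad \delta(J^{\chi'}(K_w)) \cap H^1_{\textup{ur}}(G_{K_w}, J[2])
\]
coincide, and by the proof of Lemma \ref{lem:selmer_condtions_at_mult_places}(ii) their common dimension is $2g-1$, one less than that of each of the $2g$-dimensional spaces $\delta(J^\chi(K_w))$ and $\delta(J^{\chi'}(K_w))$.

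Next, the explicit description of $\delta_d(D_i)$ from Example \ref{ex:hyp_curves} shows that the ratio $\delta_{d'}(D_i)/\delta_d(D_i)$, viewed in $(K_w^\times/K_w^{\times 2})^{2g+1}$, is the unramified tuple with $1$ in the $i$th coordinate and $d/d'$ in every other coordinate. Since the product of its $i$th and $j$th coordinates equals the non-square $d/d'$, the criterion above shows it does not lie in $\delta(J^\chi(K_w))$; since $\delta_d(D_i)$ does, we conclude $\delta_{d'}(D_i) \notin \delta(J^\chi(K_w))$, and hence $\delta(J^{\chi'}(K_w)) \not\subseteq \delta(J^\chi(K_w))$.

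The intersection $\delta(J^\chi(K_w)) \cap \delta(J^{\chi'}(K_w))$ is therefore a proper subspace of the $2g$-dimensional space $\delta(J^\chi(K_w))$ that contains the $(2g-1)$-dimensional common unramified subspace, so by codimension it must equal the latter. This yields both \eqref{eq:intersectin_multiplicative} and \eqref{eq:dimension_multiplicative}. The one point requiring a little care is justifying that Lemma \ref{lem:selmer_condtions_at_mult_places}(i) applies verbatim to each twist $J^\chi$, $J^{\chi'}$ via Remark \ref{rem:unramf_twist_of_mult}, so that the same unramified criterion is obtained in both cases; once that is granted, everything else follows by the above dimension count.
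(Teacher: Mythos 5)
Your proof is correct and follows essentially the same route as the paper's: reduce via Remark~\ref{rem:unramf_twist_of_mult} to $w$ being multiplicative of the same type for both twists, apply Lemma~\ref{lem:selmer_condtions_at_mult_places} to identify the common unramified subspace, and use the explicit formula for $\delta_d(D_i)$ from Example~\ref{ex:hyp_curves}/Lemma~\ref{descent_lemma} to produce the distinguishing element. Your dimension-count phrasing at the end is a slightly different way to package the final step than invoking Lemma~\ref{lem:selmer_condtions_at_mult_places}(iii) directly, but the two are equivalent and rest on the same facts.
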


\begin{proof}
As explained in Example \ref{ex:hyp_curves} and  Remark \ref{rem:unramf_twist_of_mult}, we can assume without loss of generality that $w$ is also a multiplicative place of type $\{i, j\}$ for both $C^\chi$ and $C^{\chi'}$. The claimed description of $\delta(J^\chi(K_w))\cap \delta(J^{\chi'}(K_w))$ then follows from Lemma \ref{lem:selmer_condtions_at_mult_places} and the explicit description of the connecting map given in Lemma \ref{descent_lemma}. Having established \eqref{eq:intersectin_multiplicative}, the equality \eqref{eq:dimension_multiplicative} follows from Lemma \ref{lem:selmer_condtions_at_mult_places}(iii).
\end{proof}

\section{Twisting and additive combinatorics}  
\label{sMain}
The goal of this section is to prove Theorem \ref{tMain}. As already indicated in the introduction, this implies both Theorem \ref{tFirstResult} and Corollaries \ref{cor:stoll} and \ref{cAb}. Moreover, it already follows from Lemma \ref{lem:2_structure_end} that $J$ is absolutely simple, so it suffices to establish the rank $1$ part of the theorem.

Throughout this section, we fix a number field $K$ and a hyperelliptic curve $C$ of the shape
$$
C: y^2 = (x - a_1) \cdots (x - a_{2g + 1}),
$$
where $a_1, \dots, a_{2g + 1} \in O_K$ are distinct. Define $B > 0$ to be the real number provided by Lemma \ref{lFiniteField} applied with $n := 2g + 2$. We will assume that the Jacobian $J := \mathrm{Jac}(C)$ is $B$-generic. 

Take $T$ to be any finite set of places including all $2$-adic and infinite places, all places of bad reduction of $J$, all primes $\mathfrak{p}$ with $N_{K/\Q}(\mathfrak{p}) \leq 2g + 1$ and all primes $\mathfrak{p}$ such that $\text{val}_\mathfrak{p}(a_i - a_j) \neq 0$.

\subsection{Finding prime elements}
\label{sKai}
For notational convenience, we shall drop the last coordinate in our Selmer groups for this subsection only, i.e.~we shall frequently and implicitly exploit the isomorphism $\Sigma \cong \mu_2^{2g}$ given by projection on the first $2g$ coordinates, thus identifying $H^1(G_K,J[2])$ with $(K^{\times}/K^{\times 2})^{2g}$.

\begin{mydef}
We say that an element $t \in K^\times/K^{\times 2}$ is a suitable twist if there exists $d \in K^\times/K^{\times 2}$ and prime elements $p, q_1, \dots, q_{2g + 2}$ such that $t = d pq_1 \cdots q_{2g + 2}$ and moreover:
\begin{enumerate}
\item[$(P1)$] we have that
\begin{itemize}
\item the elements $p,q_1, \dots, q_{2g + 2}$ are pairwise coprime, coprime to $T$ and coprime to all primes where $d$ has odd valuation,
\item $d$ is a non-square modulo $p$, $p$ splits completely in $K(J[4])/K,$ and $p$ is a square locally at all places in $T$,
\item $pq_1\cdots q_{2g + 2}$ is a square locally at all places $v\in T$ and all places where $d$ has odd valuation;
\end{itemize}
\item[$(P2)$] $\mathrm{Sel}_{2}(J^d)$ has dimension $4g^2 + 4g - 1$ and is spanned by $\delta_d(J[2])$ and linearly independent elements
$$
\mathbf{z}_i := ((z_{i, 1}, \dots, z_{i, 2g}))_{1 \leq i \leq 4g^2 + 2g - 1}
$$
with $\textup{res}_p(\textbf{z}_i)=1$ for all $i$. Write $T'$ for the union of $T$, $p$, and all primes where $d$ has odd valuation. Then for all $1 \leq i_1, i_2 \leq 2g$, all $1 \leq j \leq 2g$ and all $k \leq i_1 \leq 2g$, we have 
\begin{equation}
\label{eZiConf}
\prod_{v \in T'} (z_{(i_1 - 1) 2g + i_2, j}, q_k)_v = - 1 \Longleftrightarrow i_1 = k \text{ and } i_2 = j.
\end{equation}
Moreover, we have for all $1 \leq i \leq 2g - 1$, all $1 \leq j \leq 2g$ and all $1 \leq k \leq 2g$
\begin{equation}
\label{eZiConf2}
\prod_{v \in T'} (z_{4g^2 + i, j}, q_k)_v = 1.
\end{equation}
Finally, we have for all $1 \leq i \leq 2g - 1$ and all $1 \leq j \leq 2g$
\begin{equation}
\label{eZiConf3}
\prod_{v \in T'} (z_{4g^2 + i, j}, q_{2g + 1})_v = - 1 \Longleftrightarrow j \in \{i, i + 1\};
\end{equation}
\item[$(P3)$] we have $\mathrm{rk} \, J^t(K) > 0$.
\end{enumerate}
\end{mydef}

\begin{lemma} 
\label{lem:inj_res_map}
Let $t = dpq_1 \cdots q_{2g + 2}$ be a suitable twist. Then the natural restriction map
\[
\textup{Sel}_2(J^d) \to \bigoplus_{v\in \{p, q_1, \dots, q_{2g + 2}\}} H^1(G_{K_v}, J[2])
\]
is injective. 
\end{lemma}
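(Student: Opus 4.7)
The plan is to decompose an element of the kernel using the basis given by $(P2)$, and to eliminate each piece using a specific prime: the prime $p$ kills the $\delta_d(J[2])$-component, and the primes $q_1,\dots,q_{2g+1}$ kill the $\mathbf{z}_m$-component by an induction driven by \eqref{eZiConf}--\eqref{eZiConf3}.

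First, a dimension count shows that $\dim \delta_d(J[2])=2g$: the $2g+1$ classes $\delta_d(D_i)$ satisfy exactly the single relation $\sum_i D_i=0$, and $(P2)$ forces this space to contribute $(4g^2+4g-1)-(4g^2+2g-1)=2g$, so that $\Sel_2(J^d)=\delta_d(J[2])\oplus \langle \mathbf{z}_1,\dots,\mathbf{z}_{4g^2+2g-1}\rangle$. Any $\alpha\in \Sel_2(J^d)$ in the kernel thus decomposes uniquely as $\alpha=\delta_d(D)+\sum_m c_m\mathbf{z}_m$ with $D\in J[2]$ and $c_m\in \FF_2$. Since $\res_p(\mathbf{z}_m)=0$ by $(P2)$, we have $\res_p(\delta_d(D))=0$. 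I would exploit that $p$ splits completely in $K(J[4])$, so $J[4]\subset J(K_p)$ and hence $\delta(D)$ vanishes at $p$. Lemma~\ref{changes_under_twist_lemma} then gives
\[
\res_p(\delta_d(D))\;=\;\chi_d|_{G_{K_p}}\cup D,
\]
and by $(P1)$ the character $\chi_d|_{G_{K_p}}$ is unramified and non-trivial (as $d$ is a non-square modulo $p$). Because $G_{K_p}$ acts trivially on $J[2]$, the cup-product map $D\mapsto \chi_d|_{G_{K_p}}\cup D$ is injective on $J[2]$, forcing $D=0$.

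Next I would read off the restrictions $\res_{q_k}(\mathbf{z}_m)$ via the Hilbert symbol and the product formula. Each coordinate $z_{m,j}$ is a unit at $q_k$ (since $q_k\notin T'$) and has even valuation at every other place outside $T'$ (the Selmer condition at good-reduction primes). So $(z_{m,j},q_k)_v=1$ for $v\notin T'\cup\{q_k\}$, and the product formula yields
\[
(z_{m,j},q_k)_{q_k}\;=\;\prod_{v\in T'}(z_{m,j},q_k)_v.
\]
At the non-dyadic prime $q_k$, the tame formula shows that this Hilbert symbol records exactly whether the $j$-th coordinate of $\res_{q_k}(\mathbf{z}_m)$ is trivial or the non-trivial unit class in $K_{q_k}^{\times}/K_{q_k}^{\times 2}$. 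Writing $e_1,\dots,e_{2g}$ for the standard basis of $\FF_2^{2g}$ and unpacking $(P2)$: for the first batch $m=(i_1-1)2g+i_2$ and $k\leq i_1$, we have $\res_{q_k}(\mathbf{z}_m)=e_{i_2}$ when $k=i_1$ and $\res_{q_k}(\mathbf{z}_m)=0$ when $k<i_1$; for the second batch $m=4g^2+i$, we have $\res_{q_k}(\mathbf{z}_m)=0$ for all $k\leq 2g$ and $\res_{q_{2g+1}}(\mathbf{z}_m)=e_i+e_{i+1}$.

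I then close the argument by induction on $k\in\{1,\dots,2g\}$. At stage $k$, coefficients $c_m$ with $i_1<k$ are zero from previous stages, those with $i_1>k$ contribute nothing to $\res_{q_k}(\alpha)$ by the $k<i_1$ case of \eqref{eZiConf}, and the second batch contributes zero by \eqref{eZiConf2}. Thus $\res_{q_k}(\alpha)=\sum_{i_2=1}^{2g}c_{(k-1)2g+i_2}e_{i_2}=0$, forcing $c_{(k-1)2g+1}=\dots=c_{k\cdot 2g}=0$. After stage $2g$ the entire first batch is killed, and finally $\res_{q_{2g+1}}(\alpha)=\sum_{i=1}^{2g-1}c_{4g^2+i}(e_i+e_{i+1})=0$, so linear independence of $e_1+e_2,\dots,e_{2g-1}+e_{2g}$ in $\FF_2^{2g}$ kills the second batch too. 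The main technical subtlety is that $(P2)$ only constrains the Hilbert symbols for the first batch when $k\leq i_1$, so the induction must be carefully arranged so that the unspecified values (at $k>i_1$) are never invoked; the nested structure "$k\leq i_1$" is precisely what makes the triangular induction run. This also explains why $q_{2g+2}$ plays no role—injectivity already holds after restriction to $\{p,q_1,\dots,q_{2g+1}\}$.
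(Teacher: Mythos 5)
Your proof is correct and follows essentially the same route as the paper's: use the restriction at $p$, via Lemma \ref{changes_under_twist_lemma} and the fact that $p$ splits completely in $K(J[4])/K$, to kill the $\delta_d(J[2])$-component, then use Hilbert reciprocity together with \eqref{eZiConf}--\eqref{eZiConf3} to deduce that the $\mathbf{z}_m$ restrict to linearly independent elements at $q_1, \dots, q_{2g+1}$. Your explicit triangular induction (carefully avoiding the unconstrained symbols at $k > i_1$) is just a spelled-out version of the linear-independence check the paper leaves to the reader.
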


\begin{proof}
By assumption, we have $\textup{res}_p(\mathbf{z}_i) = 1$ for all $1 \leq i \leq 4g^2 + 2g - 1$. Since $p$ splits completely in $K(J[4])/K$, Lemma \ref{changes_under_twist_lemma} gives $\textup{res}_p(\delta_d(x)) = \textup{res}_p(\chi_d) \cup x$ for all $x \in J[2]$. Since $d$ is a non-square modulo $p$, it follows that restriction to $G_{K_p}$ maps $\delta_d(J[2])$ injectively into $H^1(G_{K_p}, J[2])$. Consequently, it suffices to show that the image of the set $\{\mathbf{z}_i : 1 \leq i \leq 4g^2 + 2g - 1\}$ in the group 
\[
\bigoplus_{v \in \{q_1, \dots, q_{2g + 2}\}} H^1(G_{K_v}, J[2])
\] 
is linearly independent.

By equations \eqref{eZiConf} and \eqref{eZiConf2} and an application of Hilbert reciprocity, we see that for all $1 \leq j \leq 2g$, we have
$$
\left(\frac{\mathbf{z}_i}{q_{j}}\right) = \mathbf{1} \quad \text{ for } i \geq 2 j g + 1, \quad \quad \quad \quad \left(\frac{\mathbf{z}_{2(j-1)g + i}}{q_j}\right) = \mathbf{e}_i \quad \text{ for } 1 \leq i \leq 2g,
$$
where $\mathbf{1}$ is the unique vector in $\{\pm 1\}^{2g}$ consisting entirely of ones, and $\mathbf{e}_i$ is the unique vector in $\{\pm 1\}^{2g}$ which is $- 1$ precisely on the $i$th coordinate. Similarly, it follows from Hilbert reciprocity and equation \eqref{eZiConf3} that, for all $1 \leq i \leq 2g - 1$, we have
$$
\left(\frac{\mathbf{z}_{4g^2 + i}}{q_{2g + 1}}\right) = \mathbf{e}_i \cdot \mathbf{e}_{i + 1}.
$$
One readily deduces the sought linear independence from these equations. 
\end{proof}

\begin{theorem}
Assume that there exist infinitely many $t \in K^\times/K^{\times 2}$ that are suitable. Then Theorem \ref{tMain} holds.
\end{theorem}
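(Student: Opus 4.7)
The plan is to show that every suitable twist $t = dpq_1 \cdots q_{2g+2}$ satisfies $\mathrm{rk}\, J^t(K) \leq 1$; combined with property $(P3)$ this forces the rank to equal $1$, and since by hypothesis there are infinitely many suitable $t$, the rank-$1$ assertion of Theorem \ref{tMain} follows (absolute simplicity having already been recorded from Lemma \ref{lem:2_structure_end}). The upper bound will come from a Selmer-group comparison between $J^d$ and $J^t$ via Lemma \ref{lem:mazur_rubin_harpaz}, taken with $\chi = \chi_d$, $\chi' = \chi_t$, and $X = \{p, q_1, \ldots, q_{2g+2}\}$.

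First I would verify that $\delta(J^d(K_v)) = \delta(J^t(K_v))$ for all $v \notin X$, which is the hypothesis required to apply Lemma \ref{lem:mazur_rubin_harpaz}. If either $v \in T$ or $d$ has odd valuation at $v$, property $(P1)$ guarantees that $pq_1 \cdots q_{2g+2}$ is a local square at $v$, so the twist $J^t = (J^d)^{\chi_{pq_1 \cdots q_{2g+2}}}$ is isomorphic to $J^d$ over $K_v$ and the two local images agree tautologically. For every other $v \notin X$, the character $\chi_{pq_1 \cdots q_{2g+2}}$ is unramified at $v$, $v$ is odd, and $J^d$ has good reduction at $v$ (since $\chi_d$ is unramified there and $J$ has good reduction), so Lemma \ref{lema:Selmer_conditions}(ii) applied with $A = J^d$ gives $\delta(J^t(K_v)) = \delta(J^d(K_v))$.

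At each $v \in X$, the character $\chi_d$ is unramified while $\chi_t$ is ramified (since $v$ is odd and $v \in \{p, q_1, \ldots, q_{2g+2}\}$ appears with valuation $1$ in $t$), and $J$ has good reduction at $v$ because $v \notin T$. Thus Remark \ref{rmk:typical_application} applies: $V_X^d$ is the image of $\mathrm{Sel}_2(J^d)$ under restriction to $\bigoplus_{v \in X} H^1(G_{K_v}, J[2])$, and Lemma \ref{lem:inj_res_map} together with $(P2)$ shows this image has dimension $4g^2 + 4g - 1 = \dim \mathrm{Sel}_2(J^d)$. The general bound from the same remark reads
\[
\dim V_X^d + \dim V_X^t \leq 2g \cdot |X| = 2g(2g+3) = 4g^2 + 6g,
\]
whence $\dim V_X^t \leq 2g + 1$. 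Substituting these two inputs into Lemma \ref{lem:mazur_rubin_harpaz} collapses the dimension formula to $\dim \mathrm{Sel}_2(J^t) = \dim V_X^t \leq 2g + 1$. Since $J[2]$ is $K$-rational of dimension $2g$ and twisting identifies $J^t[2]$ with $J[2]$ as $G_K$-modules, the standard descent exact sequence yields $\mathrm{rk}\, J^t(K) \leq (2g + 1) - 2g = 1$, as required.

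The argument sketched above is essentially bookkeeping once $(P1)$--$(P3)$ are granted, so the real work of Theorem \ref{tMain} lies upstream in constructing infinitely many suitable twists (the subject of \S \ref{sHarpaz} and \S \ref{sKai}). Within the present proof, the one conceptual point meriting care is ensuring that the ramification and splitting conditions baked into $(P1)$ are exactly what is needed to handle both the $v \in T \cup \mathrm{supp}(d)$ case and the complementary unramified case of the first step, so that the hypothesis of Lemma \ref{lem:mazur_rubin_harpaz} genuinely holds at every $v \notin X$.
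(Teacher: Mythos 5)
Your argument is correct and follows essentially the same route as the paper's proof: apply Lemma \ref{lem:mazur_rubin_harpaz} with $\chi = \chi_d$, $\chi' = \chi_t$, $X = \{p, q_1, \ldots, q_{2g+2}\}$, use Lemma \ref{lem:inj_res_map} to show $V_X^{\chi_d}$ has full dimension $4g^2 + 4g - 1$, and combine with the bound $\dim V_X^{\chi_d} + \dim V_X^{\chi_t} \leq 2g|X|$ to get $\dim \mathrm{Sel}_2(J^t) \leq 2g+1$, whence the rank bound. You spell out the verification of the hypothesis $\delta(J^d(K_v)) = \delta(J^t(K_v))$ for $v \notin X$ more explicitly than the paper does, which is a welcome clarification, but it is the same underlying argument.
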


\begin{proof}
We claim that
\begin{align}
\label{eFinalSel}
\dim \mathrm{Sel}_2(J^t/K) \leq 2g + 1
\end{align}
for all suitable $t$. Before we proceed with the proof of the claim, we explain how the claim implies the theorem. To this end, we recall that $J^t(K)[2] \cong \mathbb{F}_2^{2g}$, and therefore we have the inequality
$$
2g + \mathrm{rk} \, J^t(K) \leq \dim \mathrm{Sel}_2(J^t/K).
$$
It follows from $(P3)$ and equation \eqref{eFinalSel} that $\mathrm{rk} \, J^t(K) = 1$, as desired.

It remains to prove equation \eqref{eFinalSel}. Since $p$ and $q_1, \dots, q_{2g + 2}$ are primes of good reduction for $J$, and since $pq_1\cdots q_{2g + 2}$ is assumed to be a square locally at all places in $T$ and all places where $\chi_d$ ramifies, we can apply Lemma \ref{lem:mazur_rubin_harpaz} with $\chi=\chi_d$, $\chi'=\chi_t$ and $X = \{p, q_1, \dots, q_{2g + 2}\}$ to obtain 
\begin{equation} 
\label{eq:Selmer_change_app}
\dim \textup{Sel}_2(J^t) = 4g^2 + 4g - 1+\dim V_X^{\chi_t} - \dim V_X^{\chi_d},
\end{equation}
along with the inequality $\dim V_X^{\chi_d} + \dim V_X^{\chi_t} \leq 4g^2 + 6g$ (cf.~Lemma \ref{lema:Selmer_conditions}(ii) and Remark \ref{rmk:typical_application}). By Lemma \ref{lem:inj_res_map} we have 
\[
\dim V_X^{\chi_d} = \dim \mathrm{Sel}_2(J^d) = 4g^2 + 4g - 1,
\]
hence $\dim V_X^{\chi_t} \leq 2g+1$. Combining this with equation \eqref{eq:Selmer_change_app} gives the result.
\end{proof}

Henceforth, we write $W=\{w_1, \dots, w_{4g^2 + 2g - 1}\}$ and $W'=\{w_1', \dots, w_{2g}'\}$ for the sets of places guaranteed by the genericity of $J$ (see Definition \ref{def:places_in_aux_twist}). In this subsection, we will make extensive use of the places in $W$. The places in $W'$ will only become relevant in Section~\ref{sHarpaz}.

\begin{mydef}
We say that $\kappa \in K^\times/K^{\times 2}$ is an auxiliary twist if there exists $d \in K^\times/K^{\times 2}$ and a prime element $p$ such that $\kappa = dp$ and moreover:
\begin{enumerate}
\item[$(K1)$] we have that
\begin{itemize}
\item the element $d$ is unramified at $p, w_1, \dots, w_{4g^2 + 2g - 1}$ and is a non-square modulo $p$, 
\item $p$ splits completely in $K(J[4])/K,$ and $p$ is a square locally at all places in $T$,
\end{itemize}
\item[$(K2)$] $\mathrm{Sel}_{2}(J^d)$ has dimension $4g^2 + 4g - 1$ and is spanned by $\delta_d(J[2])$ and linearly independent elements
$$
\mathbf{z}_i := ((z_{i, 1}, \dots, z_{i, 2g}))_{1 \leq i \leq 4g^2 + 2g - 1}
$$
with $\textup{res}_p(\textbf{z}_i)=1$ for all $i$. Furthermore, we have:
\begin{itemize}
\item for all $1 \leq i_1, i_2 \leq 2g$ and all $1 \leq j_1, j_2 \leq 2g$ and all $1 \leq k \leq 2g$
\begin{equation}
\label{eZiSpec1}
\textup{val}_{w_{(i_1 - 1) 2g + i_2}}(z_{(j_1 - 1) 2g + j_2, k}) \equiv 1 \bmod 2 \Longleftrightarrow i_1 = j_1 \textup{ and } i_2 = j_2 = k,
\end{equation}
\item for all $1 \leq i_1, i_2 \leq 2g$ and all $1 \leq j \leq 2g - 1$ and all $1 \leq k \leq 2g$
\begin{equation}
\label{eZiSpec2}
\textup{val}_{w_{(i_1 - 1) 2g + i_2}}(z_{4g^2 + j, k}) \equiv 0 \bmod 2,
\end{equation}
\item for all $1 \leq i \leq 2g - 1$ and all $1 \leq j_1, j_2 \leq 2g$ and all $1 \leq k \leq 2g$ 
\begin{equation}
\label{eZiSpec3}
\textup{val}_{w_{4g^2 + i}}(z_{(j_1 - 1)2g + j_2,k}) \equiv 0 \bmod 2,
\end{equation}
\item for all $1 \leq i \leq 2g - 1$ and all $1 \leq j \leq 2g - 1$ and all $1 \leq k \leq 2g$
\begin{equation}
\label{eZiSpec4}
\textup{val}_{w_{4g^2 + i}}(z_{4g^2 + j, k}) \equiv 1 \bmod 2 \Longleftrightarrow i = j \textup{ and } k \in \{j, j + 1\}.
\end{equation}
\end{itemize}
\end{enumerate}
\end{mydef}

\noindent Our next result relies crucially on additive combinatorics, and is modelled after the proof of \cite{KP1}*{Theorem 3.5}. One of the more challenging new ingredients of this paper is to show the existence of an auxiliary twist. We shall dedicate our next subsection entirely to this task.

\begin{theorem}
Suppose that there exists an auxiliary twist $\kappa$. Then there exist infinitely many suitable $t$.
\end{theorem}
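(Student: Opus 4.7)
The plan is to use Kai's theorem \cite{Kai} to produce, for infinitely many $(n,m) \in O_K^2$ lying in prescribed residue classes modulo a finite set of places, primes $q_{2g+2}, q_1, \dots, q_{2g+1}$ equal up to units to $m, n-a_1m,\dots,n-a_{2g+1}m$, respectively. Setting $t := dp \cdot m\prod_{i=1}^{2g+1}(n-a_im)$, so that $t \equiv dp\, q_1\cdots q_{2g+2}$ in $K^\times/K^{\times 2}$, the twist $C^t$ carries the rational point $P = (n/m, 1/m^{g+1})$; by Lemma \ref{lTorsion} the class $[P]-[\infty]$ has infinite order in $J^t(K)$ outside finitely many twists, which yields $(P3)$.

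The remaining task is to arrange $(P1)$ and $(P2)$ via congruence conditions on $(n,m)$ modulo a finite set of places $S$ containing $T$, the prime $p$, the support of $d$, and the sets $W = \{w_1,\dots,w_{4g^2+2g-1}\}$ and $W' = \{w_1',\dots,w_{2g}'\}$ guaranteed by $B$-genericity. Most of $(P1)$ is inherited from $(K1)$: pairwise coprimality of $p,q_1,\dots,q_{2g+2}$ and coprimality to $T \cup \{p\} \cup \{\text{odd-valuation primes of }d\}$ are arranged at the corresponding places, while the local squareness of $pq_1\cdots q_{2g+2}$ at each place of $T'$ is achieved by demanding that $m$ and each $n-a_im$ be locally square there.

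The heart of the proof is $(P2)$. The dimension of $\textup{Sel}_2(J^d)$, its spanning set, and the vanishing of $\res_p(\mathbf{z}_i)$ all come from $(K2)$, so only the Hilbert symbol identities \eqref{eZiConf}--\eqref{eZiConf3} remain. By $(K2)$ each coordinate $z_{i,j}$ has even valuation outside a prescribed subset of $W$, while each $q_k$ is a unit outside $T' \cup \{q_k\}$; hence the product formula reduces $\prod_{v\in T'}(z_{i,j},q_k)_v$ to a Legendre-style symbol $\left(\tfrac{z_{i,j}}{q_k}\right)$, and quadratic reciprocity rewrites this as a product of symbols $\left(\tfrac{q_k}{w_*}\right)$ taken over those $w_* \in W$ at which $z_{i,j}$ has odd valuation. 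The valuation pattern \eqref{eZiSpec1}--\eqref{eZiSpec4} is engineered precisely so that \eqref{eZiConf}--\eqref{eZiConf3} become, at each $w_* \in W$, a prescribed specification of whether the $2g+2$ residues $m \bmod w_*$ and $(n-a_i m) \bmod w_*$ are squares or non-squares. Parametrising by $X := n/m$ and arranging $m$ to be a unit at $w_*$, these conditions take the form $L_i(X) = \epsilon_i z_i^2$ in $O_K/w_*$, where $L_i(X) = X - a_i$ for $i \leq 2g+1$. Distinctness of $a_1,\dots,a_{2g+1}$ ensures $\alpha_i\beta_j - \alpha_j\beta_i \neq 0$ in the sense of Lemma \ref{lFiniteField}, and $|O_K/w_*| \geq B$ by $B$-genericity, so Lemma \ref{lFiniteField} delivers admissible residue classes for $(n,m)$ modulo each $w_*$.

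With admissible residue classes of $(n,m)$ fixed modulo $S$, Kai's theorem furnishes infinitely many such $(n,m)$ with $m$ and each $n-a_i m$ simultaneously prime up to units; distinct pairs produce distinct classes $t \in K^\times/K^{\times 2}$, completing the argument. The most delicate step is the translation of the Hilbert symbol identities in $(P2)$ into simultaneously satisfiable local squareness conditions at the places of $W$, which is precisely what the valuation pattern encoded in $(K2)$, together with Lemma \ref{lFiniteField}, is designed to make possible.
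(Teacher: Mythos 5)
Your high-level strategy matches the paper's: use Kai's theorem to produce simultaneous prime values of affine linear forms, encode $(P2)$ as square-class conditions at the multiplicative places $W$ via Lemma~\ref{lFiniteField}, and get $(P3)$ from the explicit rational point and Lemma~\ref{lTorsion}. However, as written the proposal contains three interlocking gaps, all of which the paper's construction is specifically engineered to handle.

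First, the twist you write down does not carry the claimed point. If $t = dp\cdot m\prod_i(n-a_i m)$, then the equation $C^t:ty^2=\prod_i(x-a_i)$ at $(x,y)=(n/m,1/m^{g+1})$ forces $t$ to equal $m\prod_i(n-a_im)$ up to squares; the extra factor $\kappa = dp$ obstructs this. The paper resolves this by baking $\kappa$ into the linear forms: $L_{2g+2}$ is chosen so that $\gamma := \rho^2\kappa L_{2g+2}$, not $L_{2g+2}$ itself, plays the role of your $m$, and similarly $L_i = c - a_i\gamma$ for a quantity $c$ depending on $\kappa$. With that choice, $t=\kappa q_1\cdots q_{2g+2}$ becomes $\gamma\prod_i(c-a_i\gamma)$ up to squares, and the point $(c/\gamma,\rho/\gamma^{g+1})$ lies on $C^t$. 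Replacing $q_i\sim n-a_im$ by the literal linear forms $n-a_im$ loses exactly the $\kappa$ needed for the suitability condition $t=dpq_1\cdots q_{2g+2}$.

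Second, your claim that distinctness of $a_1,\dots,a_{2g+1}$ in $O_K$ guarantees $\alpha_i\beta_j-\alpha_j\beta_i\neq 0$ in the sense of Lemma~\ref{lFiniteField} is false precisely at the places of $W$: at a multiplicative place $w$ of type $\{i,j\}$ one has $a_i\equiv a_j\bmod w$, so the forms $n-a_im$ and $n-a_jm$ coincide modulo $w$ and the determinant hypothesis of Lemma~\ref{lFiniteField} fails. The paper addresses this by dropping one of the two coinciding forms when applying Lemma~\ref{lFiniteField} at each $w\in W$, and then verifying after the fact that the square class of the dropped form is automatically consistent with the prescriptions \eqref{eMu1}--\eqref{eMu4}.

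Third, your treatment of $(P1)$ is inconsistent with your treatment of $(P2)$. For $(P1)$ you demand that $m$ and each $n-a_im$ be individually square at all places of $T'\supseteq W$, but for $(P2)$ you need, at places of $W$, nontrivial square classes $\epsilon_i$ for some of the forms (this is exactly what forces the Hilbert symbol products \eqref{eZiConf}--\eqref{eZiConf3} to take the value $-1$ when required). The actual requirement in $(P1)$ is only that the \emph{product} $pq_1\cdots q_{2g+2}$ be a local square, which is strictly weaker, and it is nontrivial that this weaker condition can be made compatible with the sign pattern needed for $(P2)$; the paper verifies this compatibility explicitly through the joint construction of $\mu_0$ (controlling $q_{2g+2}$ mod $W$) and $\mu_1$ (controlling $q_1,\dots,q_{2g+1}$ mod $W$).

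Finally, a smaller omission: you do not address the convex region input to Kai's theorem, which is needed both to make the main term dominate and to force the primes $q_i$ totally positive (the latter being used in establishing the analogue of equation \eqref{eHilbertW}).
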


\begin{proof}
The proof proceeds in three steps. Our ultimate goal will be to apply Kai's result\footnote{A simplified (but weaker) version of his result can be found in \cite{KPHilbert}*{Theorem A.8}. It does not apply directly to our situation as we require $2g + 2$ linear forms instead of $4$ affine linear forms. However, the obvious generalization to $2g + 2$ affine linear forms of \cite{KPHilbert}*{Theorem A.8} is true and follows from Kai's theorem.} \cite{Kai}*{Theorem 12.1}. This result requires as input affine linear forms and a convex region.

In our first step, we construct $2g + 2$ affine linear forms, and check that these are admissible (which just means that there is no obstruction modulo a finite place to representing prime elements). In our second step, we construct the convex region and asymptotically calculate its volume. Therefore the main term in \cite{Kai}*{Theorem 12.1} dominates, and it follows that our linear forms represent primes with the expected asymptotic formula. In the third step, we use these prime elements to construct $t$ and verify that $t$ is suitable.

\subsubsection*{Affine linear forms}
Fix some $\kappa = dp$ satisfying $(K1)$ and $(K2)$. Then $\mathrm{Sel}_2(J^d)$ has a basis spanned by $\delta_d(J[2])$ and linearly independent elements
$$
\mathbf{z}_i := ((z_{i, 1}, \dots, z_{i, 2g}))_{1 \leq i \leq 4g^2 + 2g - 1}
$$
satisfying $\res_p(\mathbf{z}_i) = 1$ for all $i$ and satisfying equations \eqref{eZiConf}, \eqref{eZiConf2} and \eqref{eZiConf3}. 

By strong approximation, we may fix an integral representative for $\kappa$ that is coprime to the places $w_i$. Define $N$ to be the product of all finite places in $T - W$. 
We also fix a generator $\rho$ for $8N^{h_K}$. By applying strong approximation again, there exists $\lambda \in O_K$ coprime to $\kappa$ such that
\begin{align}
&p \lambda \equiv \square \bmod \mathfrak{p} \quad \quad \text{ for all } \mathfrak{p} \not \in T \text{ with } \mathrm{val}_\mathfrak{p}(d) \equiv 1 \bmod 2 \label{eLambdaChoice} \\
&\lambda \equiv 1 \bmod \rho. \label{eLambdaChoice2}
\end{align}
We now pick $\mu_0 \in O_K$ such that
\begin{equation}
\label{eFinalMu}
\left(\frac{\rho^2 \kappa \mu_0 + \lambda}{w_j}\right) = 
\begin{cases}
1 &\text{if } 1 \leq j \leq 4g^2, \ 1 + \lfloor \frac{j - 1}{2g} \rfloor \equiv j \bmod 2g, \\
- 1 &\text{if } 1 \leq j \leq 4g^2, \ 1 + \lfloor \frac{j - 1}{2g} \rfloor \not \equiv j \bmod 2g, \\
- 1 &\text{if } 4g^2 < j \leq 4g^2 + 2g - 1.
\end{cases}
\end{equation}
Observe that the condition $1 + \lfloor \frac{j - 1}{2g} \rfloor \equiv j \bmod 2g$ for $1 \leq j \leq 4g^2$ is equivalent to demanding that $j$ is of the shape $(i_1 - 1) 2g + i_2$ with $1 \leq i_1 = i_2 \leq 2g$.

We define the linear functions 
\begin{alignat*}{3}
&M_{1}(X) &&:= &&\rho^2 \kappa X - a_1 \rho^4 \kappa^2 \mu_0 - a_1 \rho^2 \kappa \lambda + 1 \\
& &&\ \ \vdots && \\
&M_{2g + 1}(X) &&:= &&\rho^2 \kappa X - a_{2g + 1} \rho^4 \kappa^2 \mu_0 - a_{2g + 1} \rho^2 \kappa \lambda + 1.
\end{alignat*}
We will now aim towards applying Lemma \ref{lFiniteField} to the linear functions $M_i(X)$ and the multiplicative places $w_j$ with $1 \leq j \leq 4g^2 + 2g - 1$. However, before we do so, we warn the reader that at a multiplicative place $w$ of type $\{i, j\}$ the linear functions $M_{i}(X)$ and $M_{j}(X)$ coincide modulo $w$, which means that their reductions certainly take the same value up to squares. Therefore we proceed as follows.

At each place $w_{(i - 1) 2g + j}$, which is a multiplicative place of type $\{j, 2g + 1\}$ by definition, Lemma \ref{lFiniteField} allows us to prescribe the square class of each $M_k(\mu)$ modulo $w_{(i - 1) 2g + j}$ for each $1 \leq k \leq 2g$, and then the square class of $M_{2g + 1}(\mu)$ modulo $w_{(i - 1) 2g + j}$ is determined (and equals $M_j(\mu)$). Similarly, for the places $w_{4g^2 + j}$ with $1 \leq j \leq 2g - 1$, which is a multiplicative place of type $\{j, j + 1\}$, we prescribe the square class of $M_k(\mu)$ for $k \in \{1, \dots, j, j + 2, \dots, 2g + 1\}$, and then the square class of $M_j(\mu)$ equals that of $M_{j + 1}(\mu)$.

Upon doing so and by applying strong approximation, we find $\mu_1 \in O_K$ such that
\begin{itemize}
\item for all $1 \leq i \leq 2g$ and all $1 \leq j \leq 2g$ and all $1 \leq k \leq 2g$ we have
\begin{equation}
\label{eMu1}
\left(\frac{M_{k}(\mu_1)}{w_{(i - 1) 2g + j}}\right) = 
\begin{cases}
1 &\text{if } i \neq k, \\
- 1 &\text{if } i = k,
\end{cases}
\end{equation}
\item for all $1 \leq i \leq 2g$ and all $1 \leq j \leq 2g$ we have
\begin{equation}
\label{eMu2}
\left(\frac{M_{2g + 1}(\mu_1)}{w_{(i - 1) 2g + j}}\right) = 
\begin{cases}
1 &\text{if } i \neq j, \\
- 1 &\text{if } i = j.
\end{cases}
\end{equation}
Recall that the square class of $M_{2g + 1}(\mu_1)$ must agree with the square class of $M_j(\mu_1)$ for multiplicative places $w$ of type $\{j, 2g + 1\}$ (which is the type of $w_{(i - 1) 2g + j}$). Upon comparing equations \eqref{eMu1} and \eqref{eMu2}, one sees that this is indeed the case,
\item for all $1 \leq j \leq 2g - 1$ and all $1 \leq k \leq 2g$ we have
\begin{equation}
\label{eMu3}
\left(\frac{M_{k}(\mu_1)}{w_{4g^2 + j}}\right) = 1.
\end{equation}
Clearly, $M_j(\mu_1)$ and $M_{j + 1}(\mu_1)$ have the same square class, as required for a multiplicative place of type $\{j, j + 1\}$,
\item for all $1 \leq j \leq 2g - 1$
\begin{equation}
\label{eMu4}
\left(\frac{M_{2g + 1}(\mu_1)}{w_{4g^2 + j}}\right) = - 1.
\end{equation}
\end{itemize}
Let $m$ be a generator of the ideal $(w_1 \cdots w_{4g^2 + 2g - 1})^{h_K}$. We define $2g + 2$ affine linear forms $L_1, \dots, L_{2g + 2} \in O_K[X, Y]$ via
\begin{alignat*}{3}
&L_1(X, Y) &&:= &&\rho^2 \kappa (mX + \mu_1) - a_1 \rho^2 \kappa (\rho^2 \kappa (mY + \mu_0) + \lambda) + 1 \\
& &&\ \ \vdots && \\
&L_{2g + 1}(X, Y) &&:= &&\rho^2 \kappa (mX + \mu_1) - a_{2g + 1} \rho^2 \kappa (\rho^2 \kappa (mY + \mu_0) + \lambda) + 1 \\
&L_{2g + 2}(X, Y) &&:= &&\rho^2 \kappa (mY + \mu_0) + \lambda.
\end{alignat*}
It will be useful later to note that, for all $1 \leq i \leq 2g + 1$, we have
\begin{equation} 
\label{eq:dependence_on_i}
L_i(X, Y) = \rho^2\kappa(mX + \mu_1) + 1 - a_i \rho^2 \kappa L_{2g+2}(X,Y)
\end{equation}
and 
\begin{equation}
\label{eq:congruence_mod_w}
L_i(X,Y) \equiv M_i(\mu_1) \bmod w_j \quad \quad \textup{ for all } 1 \leq j \leq 4g^2 + 2g - 1.
\end{equation}
We will finish the first part of our construction by demonstrating that the affine linear forms $L_1, \dots, L_{2g + 2}$ are admissible.

\begin{lemma}
\label{lAdmissible}
For every prime ideal $\mathfrak{p}$, there exist $u, v \in O_K$ such that
$$
L_i(u, v) \not \equiv 0 \bmod \mathfrak{p} \quad \quad \textup{ for all } 1 \leq i \leq 2g + 2.
$$
\end{lemma}

\begin{proof}
The proof is similar to \cite{KP1}*{Lemma 3.6}. Firstly, suppose that $\mathfrak{p}$ divides $\rho \kappa$. In this case, we pick $u = v = 0$. Then we check that
$$
L_i(u, v) \equiv 1 \bmod \mathfrak{p}
$$
for all $1 \leq i \leq 2g + 1$. Moreover, we have
$$
L_{2g + 2}(u, v) \equiv \lambda \bmod \mathfrak{p}.
$$
Since $\lambda$ satisfies $\lambda \equiv 1 \bmod \rho$ and since $\lambda$ is coprime to $\kappa$ by construction, this is non-zero modulo $\mathfrak{p}$.

Secondly, suppose that $\mathfrak{p}$ divides $m$.  We pick $u = v = 0$ again, so from \eqref{eq:congruence_mod_w} we have
$$
L_i(u, v) \equiv  M_{i}(\mu_1) \bmod \mathfrak{p}
$$
for all $1 \leq i \leq 2g + 1$, while straight from the definition of $L_{2g+2}(X,Y)$ we have
$$
L_{2g + 2}(u, v) \equiv \rho^2 \kappa \mu_0 + \lambda \bmod \mathfrak{p}.
$$
By construction of $\mu_0$ and $\mu_1$, we see that these elements are non-zero modulo $\mathfrak{p}$. 

Thirdly, suppose that $\mathfrak{p}$ is any place not dividing $\rho m \kappa$. In particular, it follows from our construction of $T$ that $N_{K/\Q}(\mathfrak{p}) > 2g + 1$. We start by picking any $v$ such that
$$
\rho^2 \kappa mv + \rho^2 \kappa \mu_0 + \lambda \not \equiv 0 \bmod \mathfrak{p}.
$$
Because $N_{K/\Q}(\mathfrak{p}) > 2g + 1$, it follows that there exists $u$ such that
$$
u \bmod \mathfrak{p} \not \in \left\{\frac{-\rho^2 \kappa \mu_1 - 1 + a_i \rho^2 \kappa \left(\rho^2 \kappa (mv + \mu_0) + \lambda\right)}{\rho^2 \kappa m} \bmod \mathfrak{p} : 1 \leq i \leq 2g + 1\right\}.
$$
Having chosen these particular $u$ and $v$, it is readily verified that 
$$
L_i(u, v) \not \equiv 0 \bmod \mathfrak{p} \quad \quad \textup{ for all } 1 \leq i \leq 2g + 2,
$$
ending the proof of the lemma.
\end{proof}

\subsubsection*{A convex region} 
Our next goal is to construct the convex region $\Omega \subseteq \R^{2n}$ to which we apply \cite{Kai}*{Theorem 12.1}. Roughly speaking, we take the largest region that makes all the prime values $q_i$ attained by our linear forms totally positive. This argument is similar to \cite{KP1}*{Construction of a convex region}, although we shall now give full details for completeness.

To this end, we fix an integral basis $\{\omega_1, \dots, \omega_n\}$ of $K$. For $1\leq i\leq 2g+2$, we define the affine linear form $\widetilde{L_i}: \Z^{2n} \rightarrow O_K$ by 
$$
(x_1, \dots, x_n, y_1, \dots, y_n) \mapsto L_i(x_1 \omega_1 + \dots + x_n \omega_n, y_1 \omega_1 + \dots + y_n \omega_n).
$$
For every real embedding $\sigma: K \xhookrightarrow{} \R$, this gives an induced affine linear form $\sigma \circ \widetilde{L_i}: \Z^{2n} \rightarrow \R$. By viewing $\Z^{2n}$ inside $\R^{2n}$ in the natural way, this also extends uniquely to an affine linear form $\sigma \circ \widetilde{L_i}: \R^{2n} \rightarrow \R$. This allows us to define
$$
\Omega := \left\{(\mathbf{x}, \mathbf{y}) \in \R^{2n} : \sigma(\widetilde{L_i}(\mathbf{x}, \mathbf{y})) > 0 \textup{ for all } \sigma: K \xhookrightarrow{} \R \textup{ and all } i \in \{1, \dots, 2g + 2\}\right\}.
$$

\begin{lemma}
\label{lRegion} 
There exists $C_1 > 0$ such that
$$
\Vol\left(\Omega \cap [-H, H]^{2n}\right) \sim C_1 H^{2n}
$$
as $H \rightarrow \infty$.
\end{lemma}

\begin{proof}
Take $\sigma: K \xhookrightarrow{} \mathbb{R}$ to be a real embedding. Our first goal will be to rewrite the $2g + 2$ inequalities
\begin{equation}
\label{eSystemIneq}
\sigma(\widetilde{L_i}(\mathbf{x}, \mathbf{y})) > 0 \text{ for all } i \in \{1, \dots, 2g + 2\}
\end{equation}
as two inequalities. To this end, denote by $i_\sigma$ the unique index $i$ where $\sigma(a_i \kappa)$ is maximal. From equation \eqref{eq:dependence_on_i}, one sees that the system of inequalities in \eqref{eSystemIneq} is equivalent to
$$
\sigma(\widetilde{L_{i_\sigma}}(\mathbf{x}, \mathbf{y})) > 0 \quad \quad \text{and} \quad \quad \sigma(\widetilde{L_{2g + 2}}(\mathbf{x}, \mathbf{y})) > 0.
$$
Write $\mathcal{M}_{i_\sigma, \sigma}$ and $\mathcal{M}_{2g + 2, \sigma}$ for the degree $1$ parts of $\widetilde{L_{i_\sigma}}$ and $\widetilde{L_{2g + 2}}$. We now apply \cite{KPHilbert}*{Lemma 5.6}. Since this result immediately implies our lemma, it remains to check that the hypotheses of \cite{KPHilbert}*{Lemma 5.6} are satisfied. Therefore we have to check that the system of linear forms $\{\mathcal{M}_{i_\sigma, \sigma} : \sigma \text{ real embedding}\} \cup \{\mathcal{M}_{2g + 2, \sigma} : \sigma \text{ real embedding}\}$ are linearly independent over $\R$. Define the column vector $\boldsymbol{\omega} = (\omega_1, \dots, \omega_n)^T$. Tracing through the definitions, we see that
\[
\mathcal{M}_{2g + 2, \sigma}(\mathbf{x}, \mathbf{y}) = \sigma(\rho^2\kappa m) \sigma(\boldsymbol{\omega})^T\mathbf{y}
\]
and
\[
\mathcal{M}_{i_\sigma, \sigma}(\mathbf{x},\mathbf{y}) = \sigma(\rho^2\kappa m) \sigma(\boldsymbol{\omega})^T\mathbf{x} -\sigma(a_{i_\sigma}\rho^2\kappa) \mathcal{M}_{2g + 2, \sigma}(\mathbf{x},\mathbf{y}).
\]  
In particular, the $\mathbb{R}$-span of the linear forms $\mathcal{M}_{i_\sigma, \sigma}$ and $\mathcal{M}_{2g + 2, \sigma}$ is the $\mathbb{R}$-span of the linear forms $\sigma(\boldsymbol{\omega})^T\mathbf{x}$ and $\sigma(\boldsymbol{\omega})^T\mathbf{y}$. The sought linear independence is now a consequence of the standard fact from algebraic number theory that the vectors $\{\sigma(\boldsymbol{\omega}) : \sigma \text{ real embedding}\}$ are linearly independent over $\mathbb{R}$.
\end{proof}

\subsubsection*{Constructing suitable twists} 
We now have all the ingredients to apply \cite{Kai}*{Theorem 12.1}. By Lemma \ref{lAdmissible} and Lemma \ref{lRegion}, we see that the main term in \cite{Kai}*{Theorem 12.1} dominates. Hence there are infinitely many tuples of prime elements 
$$
(q_{1, l}, \dots, q_{2g + 2, l})_{l \geq 1}
$$
attained by the linear forms $L_1, \dots, L_{2g + 2}$, where we further demand that the $q_{i, l}$ are coprime to $T'$ and that the ideals $(q_{i, l})$ are pairwise distinct.

We now fix one such tuple $(q_1, \dots, q_{2g + 2})$ and we define
$$
t := \kappa q_1 \cdots q_{2g + 2}.
$$
We will show that $t$ satisfies $(P1)$ and $(P2)$. Moreover, we will also show that $(P3)$ holds for all but finitely many tuples.

\paragraph{Verification of $(P1)$.} Note that $q_1, \dots, q_{2g + 2}$ are pairwise coprime and coprime to $T'$ by construction. This gives the first bullet point in $(P1)$. The second bullet point in $(P1)$ is a direct consequence of $(K1)$. It remains to verify the third and final bullet point in $(P1)$.

Let $\mathfrak{p} \not \in T$ be such that $\mathrm{val}_\mathfrak{p}(d) \equiv 1 \bmod 2$. Since every such $\mathfrak{p}$ divides $\kappa$, an inspection of the linear forms yields
\begin{align}
\label{eqiCong}
q_1 \equiv \cdots \equiv q_{2g + 1} \equiv 1 \bmod \mathfrak{p}, \quad \quad q_{2g + 2} \equiv \lambda \bmod \mathfrak{p}.
\end{align}
We also note that
\begin{equation}
\label{eqiCongN}
q_1 \equiv \cdots \equiv q_{2g + 1} \equiv 1 \bmod 8N, \quad \quad q_{2g + 2} \equiv \lambda \equiv 1 \bmod 8N
\end{equation}
by equation \eqref{eLambdaChoice2}. From this and equation \eqref{eLambdaChoice}, we conclude that the third bullet point holds at all places in $T - W$ and at all places $\mathfrak{p}$ satisfying $\mathrm{val}_\mathfrak{p}(d) \equiv 1 \bmod 2$. We now check the third bullet point at the places in $W$, which by $(K1)$ is equivalent to
\begin{align}
\label{eAtTClaim}
q_1 \cdots q_{2g + 2} \equiv \square \bmod w_j
\end{align}
for all $1 \leq j \leq 4g^2 + 2g - 1$. In order to prove \eqref{eAtTClaim}, we start by observing that, for each $1 \leq j \leq 4g^2 + 2g - 1$, a consequence of \eqref{eq:congruence_mod_w} and the definition of $L_{2g + 2}(X, Y)$ is 
\begin{equation}
\label{eqkMk}
q_k \equiv M_{k}(\mu_1) \bmod w_j \quad \text{ for } 1 \leq k \leq 2g + 1, \quad \quad q_{2g + 2} \equiv \rho^2 \kappa \mu_0 + \lambda \bmod w_j.
\end{equation}
First suppose that $j > 4g^2$. It then follows from \eqref{eMu3}, \eqref{eMu4} and \eqref{eFinalMu} that
$$
q_k \equiv \square \bmod w_j \quad \text{ for } 1 \leq k \leq 2g,
$$
while $q_{2g + 1}$ and $q_{2g + 2}$ are non-squares modulo $w_j$. This gives \eqref{eAtTClaim} for $j > 4g^2$. Now suppose that $1 \leq j \leq 4g^2$, and write $j = (i_1 - 1)2g + i_2$. Then it follows from \eqref{eMu1} and \eqref{eMu2} that $q_1 \cdots q_{2g + 1} \bmod w_j$ is a square if and only if $i_1 = i_2$. Hence we conclude that
$$
q_1 \cdots q_{2g + 2} \equiv \square \bmod w_j
$$
by equation \eqref{eFinalMu} and the observation immediately afterwards. We have now proven \eqref{eAtTClaim}.

\paragraph{Verification of $(P2)$.} We need to check equations \eqref{eZiConf}, \eqref{eZiConf2} and \eqref{eZiConf3}. Before we shall delve into this task, we will first claim that for all $1 \leq i \leq 4g^2 + 2g - 1$, for all $1 \leq j \leq 2g$ and all $1 \leq k \leq 2g + 1$
\begin{equation}
\label{eHilbertW}
\prod_{v \in T'} (z_{i, j}, q_k)_v = \prod_{l = 1}^{4g^2 + 2g - 1} (z_{i, j}, q_k)_{w_l}.
\end{equation}
We start by proving the claim, and we will then check equations \eqref{eZiConf}, \eqref{eZiConf2} and \eqref{eZiConf3}.

\paragraph{Proof of claim \eqref{eHilbertW}.} Recall that we have proven the following congruences in respectively equations \eqref{eqiCong} and \eqref{eqiCongN} 
$$
q_1 \equiv \cdots \equiv q_{2g + 1} \equiv 1 \bmod \mathfrak{p}, \quad \quad q_1 \equiv \cdots \equiv q_{2g + 1} \equiv 1 \bmod 8N
$$
for all prime ideals $\mathfrak{p} \not \in T$ satisfying $\mathrm{val}_{\mathfrak{p}}(d) \equiv 1 \bmod 2$. Moreover, we know that the $q_i$ are totally positive by construction of the region $\Omega$. Since $T' = T \cup \{p\} \cup \{\mathfrak{p} : \mathrm{val}_\mathfrak{p}(d) \equiv 1 \bmod 2\}$ and since $\res_p(\mathbf{z}_i) = 1$ by $(K1)$, it follows that
$$
\prod_{v \in T'} (z_{i, j}, q_k)_v = \prod_{v \in T} (z_{i, j}, q_k)_{w_l} = \prod_{l = 1}^{4g^2 + 2g - 1} (z_{i, j}, q_k)_{w_l},
$$
as was claimed.

\paragraph{Proof of equation \eqref{eZiConf}.} Recall that the Selmer elements $z_{i, j}$ have the local behavior at $w_l$ as specified by property $(K2)$, more specifically equations \eqref{eZiSpec1}, \eqref{eZiSpec2}, \eqref{eZiSpec3} and \eqref{eZiSpec4}. Moreover, we will repeatedly use equation \eqref{eHilbertW} to simplify the product of Hilbert symbols in equations \eqref{eZiConf}, \eqref{eZiConf2} and \eqref{eZiConf3}.

Take $1 \leq i_1, i_2 \leq 2g$, $1 \leq j \leq 2g$ and $k \leq i_1 \leq 2g$. We need to show that
$$
\prod_{l = 1}^{4g^2 + 2g - 1} (z_{(i_1 - 1) 2g + i_2, j}, q_k)_{w_l} = - 1 \Longleftrightarrow i_1 = k \text{ and } i_2 = j.
$$
By equations \eqref{eMu3} and \eqref{eqkMk} and the assumption $k \leq 2g$, this is equivalent to
$$
\prod_{l_1 = 1}^{2g} \prod_{l_2 = 1}^{2g} (z_{(i_1 - 1) 2g + i_2, j}, q_k)_{w_{(l_1 - 1)2g + l_2}} = - 1 \Longleftrightarrow i_1 = k \text{ and } i_2 = j.
$$
Note that $q_k$ is a unit locally at $w_{(l_1 - 1)2g + l_2}$. Hence we see that $(z_{(i_1 - 1) 2g + i_2, j}, q_k)_{w_{(l_1 - 1)2g + l_2}} = - 1$ is equivalent to
$$
\text{val}_{w_{(l_1 - 1)2g + l_2}}(z_{(i_1 - 1) 2g + i_2, j}) \equiv 1 \bmod 2 \ \ \ \text{ and } \ \ \ \left(\frac{q_k}{w_{(l_1 - 1)2g + l_2}}\right) = - 1.
$$
By equations \eqref{eZiSpec1}, \eqref{eMu1} and \eqref{eqkMk}, we see that the last two conditions are equivalent to $l_1 = i_1$ and $l_2 = i_2 = j$ and $k = l_1$. We conclude that
$$
\prod_{l_1 = 1}^{2g} \prod_{l_2 = 1}^{2g} (z_{(i_1 - 1) 2g + i_2, j}, q_k)_{w_{(l_1 - 1)2g + l_2}} = (z_{(i_1 - 1) 2g + i_2, j}, q_k)_{w_{(i_1 - 1)2g + i_2}} = (- 1)^{\mathbf{1}_{i_1 = k \text{ and } i_2 = j}},
$$
as desired.

\paragraph{Proof of equation \eqref{eZiConf2}.} Take $1 \leq i \leq 2g - 1$, $1 \leq j \leq 2g$ and $1 \leq k \leq 2g$. We need to show that
$$
\prod_{l = 1}^{4g^2 + 2g - 1} (z_{4g^2 + i, j}, q_k)_{w_l} = 1.
$$
But we have
$$
\prod_{l = 1}^{4g^2 + 2g - 1} (z_{4g^2 + i, j}, q_k)_{w_l} = \prod_{l = 4g^2 + 1}^{4g^2 + 2g - 1} (z_{4g^2 + i, j}, q_k)_{w_l} = 1,
$$
where the first equality follows from \eqref{eZiSpec2} and the second equality follows directly from equations \eqref{eMu3} and \eqref{eqkMk}.

\paragraph{Proof of equation \eqref{eZiConf3}.} Take $1 \leq i \leq 2g - 1$ and $1 \leq j \leq 2g$. We need to show that
$$
\prod_{l = 1}^{4g^2 + 2g - 1} (z_{4g^2 + i, j}, q_{2g + 1})_{w_l} = - 1 \Longleftrightarrow j \in \{i, i + 1\}.
$$
By equation \eqref{eZiSpec2}, we have
$$
\prod_{l = 1}^{4g^2 + 2g - 1} (z_{4g^2 + i, j}, q_{2g + 1})_{w_l} = \prod_{l = 4g^2 + 1}^{4g^2 + 2g - 1} (z_{4g^2 + i, j}, q_{2g + 1})_{w_l}.
$$
Note that $q_{2g + 1}$ is a unit locally at each $w_l$. Hence, for all $4g^2 + 1 \leq l \leq 4g^2 + 2g - 1$, we see that
$$
(z_{4g^2 + i, j}, q_{2g + 1})_{w_l} = - 1 \quad \Longleftrightarrow \quad \text{val}_{w_l}(z_{4g^2 + i, j}) \equiv 1 \bmod 2 \quad \text{ and } \quad \left(\frac{q_{2g + 1}}{w_l}\right) = - 1.
$$
By equations \eqref{eZiSpec4}, \eqref{eMu4} and \eqref{eqkMk}, these last two conditions are equivalent to $j \in \{i, i + 1\}$ and $l = 4g^2 + i$. We conclude that
$$
\prod_{l = 4g^2 + 1}^{4g^2 + 2g - 1} (z_{4g^2 + i, j}, q_{2g + 1})_{w_l} = (z_{4g^2 + i, j}, q_{2g + 1})_{w_{4g^2 + i}} = (- 1)^{\mathbf{1}_{j \in \{i, i + 1\}}},
$$
as desired.

\paragraph{Verification of $(P3)$.} We now pick $(x_0, y_0) \in O_K^2$ such that $q_i = L_i(x_0, y_0)$. We set
$$
c := \rho^2 \kappa (mx_0 + \mu_1) + 1, \quad \quad \gamma := \rho^2 \kappa (\rho^2 \kappa (my_0 + \mu_0) + \lambda).
$$
We have  $L_i(x_0, y_0) = c - a_i \gamma$ for $i \in \{1, \dots, 2g + 1\}$ and $\gamma = \rho^2 \kappa L_{2g + 2}(x_0, y_0)$. Hence we conclude that
$$
t = \kappa q_1 \cdots q_{2g + 2} = \rho^{-2} \gamma (c - a_1\gamma) \cdots (c - a_{2g + 1}\gamma),
$$
so we see that the quadratic twist $C^t:ty^2=(x-a_1)\cdots (x-a_{2g+1})$ has the rational point $P := (c/\gamma, \rho/\gamma^{g + 1})$, which gives a rational point $[P] - [\infty]$ on $J^t$ as well. Since this rational point is clearly not $2$-torsion, it follows from Lemma \ref{lTorsion} that $(P3)$  holds for all but finitely many tuples.
\end{proof}


\subsection{Finding auxiliary twists}
\label{sHarpaz}

For this subsection, we return to writing elements of $H^1(G_K,J[2])$ as $2g+1$ tuples of elements of $K^{\times}/K^{\times 2}$ for which the product of all coordinates is a square, as in Section \ref{ssDescent}. Sometimes, however, we will implicitly switch between this representation and the standard representation of elements of $H^1(G_K,J[2])$ as $1$-cocycles valued in $J[2].$ For consistency,  in both cases we opt to use multiplicative notation for the group operation on $H^1(G_K,J[2])$. 

Recall that we write $W = \{w_1, \dots, w_{4g^2 + 2g - 1}\}$ and $W' = \{ w_1', \dots, w_{2g}'\}$ for the places guaranteed by the genericity of $J$ (see Definition \ref{def:places_in_aux_twist}).

\begin{theorem} 
\label{thm:almost_aux_twist}
There exists a quadratic character 
$\chi$ with the following property:
\begin{itemize}
\item $\chi$ is unramified over $W \cup W'$, and is trivial at all places in $T - (W \cup W')$,
\item $\textup{Sel}_2(J^\chi)$ is generated by $\delta_\chi(J[2])$ and elements $\textbf{z}_1, \dots, \textbf{z}_{4g^2 + 2g - 1}$ satisfying \eqref{eZiSpec1}--\eqref{eZiSpec4},
\item $\chi$ is ramified at a place $v_0\notin T$ such that $\res_{v_0}(\textbf{z}_i) = 1$ for all $i \in \{1, \dots, 4g^2 + 2g-1\}$.
\end{itemize}
\end{theorem}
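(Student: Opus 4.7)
The plan is to construct $\chi$ in two stages, using Lemma \ref{lem:mazur_rubin_harpaz} as the principal tool for tracking how $\mathrm{Sel}_2$ varies under twist.

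In the first stage, I build an intermediate quadratic character $\chi'$, unramified at $W \cup W'$ and trivial at $T - (W \cup W')$, so that $\mathrm{Sel}_2(J^{\chi'})$ has dimension $4g^2 + 4g - 1$ and is spanned by $\delta_{\chi'}(J[2])$ together with classes $\mathbf{z}_1, \dots, \mathbf{z}_{4g^2 + 2g - 1}$ satisfying \eqref{eZiSpec1}--\eqref{eZiSpec4}. I proceed inductively: for each index $\ell$, I adjoin one Selmer class ramified at $w_\ell$ with the prescribed valuation pattern and unramified at the remaining $w_m$. The main mechanism is Lemma \ref{lHarpazChange}, which shows that toggling the Frobenius value $\chi'(\mathrm{Frob}_{w_\ell})$ modifies $\delta(J^{\chi'}(K_{w_\ell}))$ in a one-dimensional fashion dictated by the type of $w_\ell$; combined with Lemma \ref{lem:mazur_rubin_harpaz}, each controlled toggle shifts the Selmer rank by at most one. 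To realise these toggles globally, I introduce auxiliary primes chosen by Chebotarev to split completely in $K(J[4])$ and in the field extensions cut out by the previously constructed Selmer classes, guaranteeing both that existing basis elements remain unramified at the new prime and that the restriction at each $w_m$ stays on target. The $2g$ places of $W'$ enter as extra multiplicative handles, used to adjust parities during the construction without contaminating the local signatures being sculpted on $W$.

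In the second stage, I apply Chebotarev in the Galois extension $L/K$ obtained from $K(J[4])$ by adjoining the fields cut out by the $\mathbf{z}_i$, viewed as cocycles valued in $J[2]$. Any prime $v_0 \notin T$ splitting completely in $L$ will satisfy $\res_{v_0}(\mathbf{z}_i) = 1$ for every $i$. Using strong approximation, I then promote $\chi'$ to a quadratic character $\chi$ that agrees with $\chi'$ at every place of $T$, has the same Frobenius values at each $w_m \in W \cup W'$, and is ramified at $v_0$ (together with at most one further auxiliary prime outside $T$ to compensate for class-group or archimedean parity obstructions preventing a character ramified at $v_0$ alone). A final application of Lemma \ref{lem:mazur_rubin_harpaz}, with $X$ taken to be the finite set of places where the local conditions for $\chi$ and $\chi'$ disagree, combined with Lemma \ref{lema:Selmer_conditions}(iii), shows that the Selmer rank is unchanged; since each $\mathbf{z}_i$ is trivial at $v_0$ (and at any auxiliary ramification place chosen to lie in the same Chebotarev class as $v_0$), the $\mathbf{z}_i$ persist in $\mathrm{Sel}_2(J^\chi)$ and hence furnish the required basis.

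The main obstacle is the first stage: at each inductive step one must ensure that the Selmer rank genuinely grows by exactly one and that the new class picks up the intended ramification pattern at its designated $w_\ell$, without inheriting spurious ramification at other places in $W$. This is essentially a dimension-and-parity computation balancing Lemma \ref{lem:mazur_rubin_harpaz} against the valuation constraints in Lemma \ref{lem:selmer_condtions_at_mult_places}(iii) and the global obstruction coming from Hilbert reciprocity; the coordinated role of the $4g^2 + 2g - 1$ places of $W$ and the $2g$ places of $W'$, built into the $B$-genericity hypothesis, is precisely what provides the right number of degrees of freedom to make this balance work simultaneously for every $\mathbf{z}_i$.
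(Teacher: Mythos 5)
Your proposal correctly identifies the two ingredients—first placing the desired classes $\mathbf{z}_i$ into a Selmer group, then arranging the right ambient dimension—but the mechanism you describe for the second ingredient has a genuine gap, and it is the heart of the proof.

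Your Stage 1 asks to grow $\dim\mathrm{Sel}_2$ by exactly one at each inductive step, so that after $4g^2+2g-1$ steps the dimension equals exactly $4g^2+4g-1$ and nothing extraneous has entered. But Lemma \ref{lem:mazur_rubin_harpaz} together with Lemma \ref{lHarpazChange} only gives you the one-sided bound $\dim V_X^\chi + \dim V_X^{\chi'} \le 1$ when $X$ consists of a single multiplicative place; it controls neither which direction the rank moves nor the actual content of the new Selmer group. In particular, nothing stops extra classes from appearing at intermediate steps (for instance classes supported on the auxiliary primes you introduce), and the existing $\mathbf{z}_i$'s do not automatically survive a fresh twist unless one checks the Selmer conditions of each old class against the new twist. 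Your statement that the $W'$ places provide \textquotedblleft extra multiplicative handles to adjust parities\textquotedblright\ gestures at a fix but does not supply one. The paper avoids this entirely by \emph{not} trying to control the dimension during the construction of the $\mathbf{z}_i$'s: Corollary \ref{cor:including_zs} only ensures that the $\mathbf{z}_i$'s lie in $\mathrm{Sel}_2(J^{\kappa_0})$ (and arranges $\mathbf{z}_s = \psi_s \cup D_{i_s,j_s}$, with $\psi_s$ ramified at $w_s$ and a fresh auxiliary prime so that the required local behaviour is automatic). The dimension control then comes from a separate, \emph{monotone-decreasing} argument (Lemmas \ref{lem:systematic_subspace} and \ref{lem:nice_chars_shrink}): one isolates the subgroup $\mathrm{Sel}_2^E$ of elements factoring through the auxiliary multiquadratic field $E$, shows it is exactly the span of the $\mathbf{z}_i$'s, and repeatedly twists to kill any spurious element unramified over $W'$ that does not lie in $\mathrm{Sel}_2^E$. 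Each such step strictly drops the Selmer rank, so the process terminates. This is precisely where the places $w_i' \in W'$ are needed—as the ramified leg of the rank-dropping twist in Lemma \ref{lem:nice_chars_shrink}—not as parity adjusters.

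Your Stage 2 has a second, smaller gap: you twist $\chi'$ to $\chi$ by introducing ramification at a fresh prime $v_0$ and assert via Lemma \ref{lem:mazur_rubin_harpaz} and Lemma \ref{lema:Selmer_conditions}(iii) that the Selmer rank is unchanged. But at a prime of good reduction where the ramification status changes, the local condition changes to a complementary subspace of dimension $2g$, and Lemma \ref{lem:mazur_rubin_harpaz} only bounds $\dim V_X^\chi + \dim V_X^{\chi'}$ by $2g$; it does not force the rank to stay fixed. The paper sidesteps this by never introducing $v_0$ at the end: the prime $\mathfrak{p}_0$ is already one of the ramification primes of $\kappa_0$, produced during Corollary \ref{cor:including_zs}, and the $\mathbf{z}_i$'s are constructed to have trivial restriction there from the outset. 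The \textquotedblleft promising\textquotedblright\ elements $\lambda$ are then deliberately required to have even valuation at $\mathfrak{p}_0$, so $\chi_{\kappa_0\lambda'}$ remains ramified at $\mathfrak{p}_0$ throughout the shrinking process, and the third bullet of the theorem holds with $v_0 = \mathfrak{p}_0$ for free.
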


Before we proceed with the proof of Theorem \ref{thm:almost_aux_twist}, we will show that it readily implies the existence of an auxiliary twist.

\begin{corollary}
There exists an auxiliary twist $\kappa$. 
\end{corollary}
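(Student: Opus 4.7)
We deduce the corollary from Theorem \ref{thm:almost_aux_twist}. Let $\chi$ and $v_0 \notin T$ be as provided by that theorem, and pick any $d \in K^{\times}$ with $\chi_d = \chi$. Since $\chi$ is unramified at each $w_i \in W$, the element $d$ is automatically unramified at $w_1, \dots, w_{4g^2 + 2g - 1}$. Moreover, $\textup{Sel}_2(J^d) = \textup{Sel}_2(J^{\chi})$, with basis given by $\delta_d(J[2])$ together with elements $\mathbf{z}_1, \dots, \mathbf{z}_{4g^2 + 2g - 1}$ already satisfying \eqref{eZiSpec1}--\eqref{eZiSpec4}. It thus suffices to produce a prime element $p$ coprime to $T \cup \{v_0\}$ and to the prime divisors of $d$ such that $\kappa := dp$ satisfies the remaining conditions of $(K1)$ and also $\textup{res}_p(\mathbf{z}_i) = 1$ for all $i$.

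The required properties of $p$ amount to: $(a)$ $p$ splits completely in $K(J[4])/K$; $(b)$ $d$ is a non-square modulo $p$; $(c)$ each $z_{i,j}$ is a square modulo $p$; and $(d)$ $p$ is a square locally at each $v \in T$. Given $(a)$, condition $(c)$ is equivalent to $\textup{res}_p(\mathbf{z}_i) = 1$, since $p$ then splits in $K(J[2])$ and each $\mathbf{z}_i$ is unramified at $p$ (the latter because $\mathbf{z}_i$ lies in $\textup{Sel}_2(J^\chi)$, which is unramified outside $T \cup \{v_0\}$, and $\textup{res}_{v_0}(\mathbf{z}_i) = 1$ shows $\mathbf{z}_i$ is in fact unramified at $v_0$ too). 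Conditions $(a)$--$(c)$ together prescribe $\textup{Frob}_p$ in the finite abelian extension $L := K(J[4]) \cdot \prod_{i,j} K(\sqrt{z_{i,j}}) \cdot K(\sqrt{d})$ of $K$, namely trivial on every factor except $K(\sqrt{d})$, on which it must be non-trivial. Such a Frobenius element exists if and only if
\[
K(\sqrt{d}) \not\subseteq M := K(J[4]) \cdot \prod_{i,j} K(\sqrt{z_{i,j}}).
\]

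We verify this non-containment using the ramification of $\chi$ at $v_0$; this is the main technical point. The extension $K(\sqrt{d})/K$ is ramified at $v_0$ because $\chi = \chi_d$ is. By contrast, $M/K$ is unramified at $v_0$: since $v_0 \notin T$, the place $v_0$ is of good reduction for $J$ and away from $2$, hence $K(J[4])/K$ is unramified at $v_0$; meanwhile $\textup{res}_{v_0}(\mathbf{z}_i) = 1$ forces $z_{i,j} \in K_{v_0}^{\times 2}$, so each $K(\sqrt{z_{i,j}})/K$ is unramified at $v_0$. Hence $K(\sqrt{d}) \not\subseteq M$. To fold in condition $(d)$, we enlarge $L$ to $L \cdot K^T$, where $K^T$ is a ray class field of $K$ with modulus supported on $T$ of sufficient depth that ``$p$ is a square locally at each $v \in T$'' becomes a condition on $\textup{Frob}_p$ in $\Gal(K^T/K)$. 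Since $K^T/K$ is unramified at $v_0$ as well, the same ramification argument shows $K(\sqrt{d}) \not\subseteq M \cdot K^T$, so all four conditions remain jointly realizable. Chebotarev's density theorem then supplies infinitely many prime ideals $\mathfrak{p}$ admitting a generator $p$ satisfying $(a)$--$(d)$, and for any such $p$ the element $\kappa = dp$ is an auxiliary twist.
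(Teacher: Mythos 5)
Your argument follows the same Chebotarev strategy as the paper, and you correctly identify the role of the ramified place $v_0$: it guarantees that $K(\sqrt{d})$ is not contained in the compositum of $K(J[4])$, $K(\sqrt{z_{i,j}})$ and the ray class field, so the prescribed Frobenius element exists. That part is sound and matches the paper.

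However, there is a genuine gap in your verification of $(K2)$. You write that $\textup{Sel}_2(J^d)$ has \emph{basis} given by $\delta_d(J[2])$ and $\mathbf{z}_1,\dots,\mathbf{z}_{4g^2+2g-1}$, but Theorem \ref{thm:almost_aux_twist} only asserts that these elements \emph{generate} the Selmer group. Condition $(K2)$ requires the dimension to be exactly $4g^2 + 4g - 1$, and this does not come for free: you must check (i) that the $\mathbf{z}_i$ are linearly independent, which follows from the valuation conditions \eqref{eZiSpec1}--\eqref{eZiSpec4} (each $\mathbf{z}_i$ has odd valuation at $w_i$ and nowhere else in $W$); and (ii) that $\delta_d(J[2])$ has full dimension $2g$ and meets the span of the $\mathbf{z}_i$ trivially. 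Point (ii) is the nontrivial one — the map $\delta_d$ on $J[2]$ can have nontrivial kernel in general — and the paper handles it by applying Lemma \ref{lem:restriction_inertia} at the place $v_0$: since $\chi$ is ramified at $v_0$ and $J$ has good reduction there, $\res_{v_0}\circ \delta_d$ is injective on $J[2]$, while $\res_{v_0}(\mathbf{z}_i) = 1$ for every $i$. You should add this verification; without it, $(K2)$ has not been established.
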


\begin{proof}
Take $\chi$ as in Theorem \ref{thm:almost_aux_twist}, and write $\chi=\chi_d$. Let $\mathfrak{m}$ be the formal product of $8$ and all places in $T$. Let $K_\mathfrak{m}/K$ be the corresponding ray class field. Thanks to the existence of the place $v_0$ in the statement of the theorem, by the Chebotarev density theorem there exists a prime ideal $\mathfrak{p}$ that splits completely in $K_\mathfrak{m}/K$, $K(J[4])/K$ and $K(\sqrt{z_{i,k}})/K$ for each coordinate $z_{i, k}$ of every element $\textbf{z}_i$, and is inert in $K(\sqrt{d})/K$. By construction, $\mathfrak{p}$ is principal, generated by some prime element $p$ that is a square locally at all places in $T$. 

The element $\kappa := dp$ is then an auxiliary twist. Indeed, all that remains to check is that $\dim \textup{Sel}(J^d) = 4g^2 + 4g - 1$. To see this, note that equations \eqref{eZiSpec1}--\eqref{eZiSpec4} imply that the elements $\mathbf{z}_1, \dots, \mathbf{z}_{4g^2 + 2g - 1}$ are linearly independent. That $\delta_d(J[2])$ has dimension $2g$ and intersects the span of $\{\mathbf{z}_1,\dots,\mathbf{z}_{4g^2 + 2g - 1}\}$ trivially follows from Lemma \ref{lem:restriction_inertia} applied to the place $v_0$.
\end{proof}

For a nonarchimedean place $w$ of $K$, and $\textbf{z}\in H^1(G_K, J[2])$ corresponding via Section \ref{ssDescent} to a $2g + 1$ tuple $(\alpha_1, \dots, \alpha_{2g+1})$ of elements of $K^{\times}/K^{\times 2},$ we define $\textup{val}_w(\textbf{z})\in (\mathbb{Z}/2\mathbb{Z})^{2g+1}$ to be the tuple $\big(\textup{val}_w(\alpha_i) \bmod 2\big)_{i=1}^{2g+1}.$

For $i, j \in \{1, \dots, 2g + 1\}$ distinct, denote by $E_{ij}$ the element of $(\mathbb{Z}/2\mathbb{Z})^{2g + 1}$ whose $i$th and $j$th coordinate is equal to $1$, and whose $k$th coordinate is equal to $0$ for all $k\notin \{i, j\}.$

\begin{proposition} 
\label{finding_selmer_basis}
Let $i$ and $j$ be distinct elements of $\{1, \dots, 2g + 1\}$ and let $w \in T$ be a multiplicative place of type $\{i, j\}$, with corresponding prime ideal $\mathfrak{p}_w$. Let $T'\supseteq T$ be a finite set of places. Then there exist distinct prime ideals $\mathfrak{p}_1$ and $\mathfrak{p}_2$ not in $T'$, and $a_{\mathfrak{p}_1\mathfrak{p}_2}, a_{\mathfrak{p}_1\mathfrak{p}_w} \in K^{\times}$, with the following properties:
\begin{itemize}
\item[(1)] the character $\varphi = \chi_{a_{\mathfrak{p}_1\mathfrak{p}_2}}$ is trivial at all $v\in T'- \{w\},$ is ramified at $\mathfrak{p}_1$ and $\mathfrak{p}_2$, and is unramified otherwise,
\item[(2)] the character $\psi=\chi_{a_{\mathfrak{p}_1\mathfrak{p}_w}}$ is trivial at all $v\in (T' \cup \{\mathfrak{p}_2\}) - \{w\}$, is ramified at $\mathfrak{p}_1$ and $\mathfrak{p}_w$, and is unramified otherwise,
\item[(3)] denote by $\textbf{z}_w$ the element $\psi\cup D_{ij}$ of $H^1(G_K, J[2])$.\footnote{This corresponds to the element 
\[
(1, \dots, 1, a_{\mathfrak{p}_1\mathfrak{p}_w}, 1, \dots, 1, a_{\mathfrak{p}_1\mathfrak{p}_w}, 1, \dots, 1)
\]
of $(K^{\times}/K^{\times 2})^{2g + 1}$, where the $a_{\mathfrak{p}_1\mathfrak{p}_w}$ terms appear in the $i$th and $j$th coordinate.}
Then $\textbf{z}_w$ is in the $2$-Selmer group of $J^{a_{\mathfrak{p}_1\mathfrak{p}_2}}$, is unramified at all $v\notin \{\mathfrak{p}_1, \mathfrak{p}_w\},$ and has trivial restriction to $G_{K_v}$ for all $v \in (T'\cup \{\mathfrak{p}_2\}) -\{w\}$. Moreover, we have $\textup{val}_w(\textbf{z}_w) = E_{i, j}$.
\end{itemize}
\end{proposition}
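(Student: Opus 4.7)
The plan is a two-stage Chebotarev construction in the style of Harpaz \cite{Har19} and Mazur--Rubin \cite{MR}. The existence of $a_{\mathfrak{p}_1\mathfrak{p}_w}, a_{\mathfrak{p}_1\mathfrak{p}_2}\in K^{\times}$ with the prescribed ramification and local-square behaviour follows from class field theory applied to the appropriate ray class group of $K$ of conductor supported on $2$, the places of $T'$, and the archimedean places: once the class of $\mathfrak{p}_1\mathfrak{p}_w$ (resp.\ $\mathfrak{p}_1\mathfrak{p}_2$) is trivial modulo squares in this group, an element with the required local behaviour exists. The primes $\mathfrak{p}_1,\mathfrak{p}_2$ will therefore be produced as Frobenius elements in large auxiliary Galois extensions encoding both these ray-class constraints and further splitting conditions needed to verify the Selmer membership of $\mathbf{z}_w$.

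In the first stage, I fix $\mathfrak{p}_1\notin T'$ satisfying: (a) $[\mathfrak{p}_1]\equiv[\mathfrak{p}_w]^{-1}$ in the ray class group above, producing $\psi=\chi_{a_{\mathfrak{p}_1\mathfrak{p}_w}}$; (b) $\mathfrak{p}_1$ splits completely in $K(J[4])$, so that $J[2]$ is rational over $K_{\mathfrak{p}_1}$ and Kummer theory gives $H^1(G_{K_{\mathfrak{p}_1}},J[2])\cong (K_{\mathfrak{p}_1}^{\times}/K_{\mathfrak{p}_1}^{\times 2})\otimes J[2]$; (c) $\mathfrak{p}_1$ has prescribed Frobenius in each $K(\sqrt{a_k-a_l})$, chosen so that the unramified classes $\delta(D_k)|_{\mathfrak{p}_1}$ sit correctly against the Weil pairing with $D_{ij}$. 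These conditions cut out a nonempty union of conjugacy classes in a finite Galois extension of $K$, so Chebotarev provides such $\mathfrak{p}_1$. Having fixed $\mathfrak{p}_1$ and hence $\psi$, in the second stage I choose $\mathfrak{p}_2\notin T'\cup\{\mathfrak{p}_1\}$ by Chebotarev in the further extension obtained by adjoining $\sqrt{a_{\mathfrak{p}_1\mathfrak{p}_w}}$, imposing $[\mathfrak{p}_2]\equiv[\mathfrak{p}_1]^{-1}$ in the same ray class group (producing $\varphi=\chi_{a_{\mathfrak{p}_1\mathfrak{p}_2}}$) and $\mathfrak{p}_2$ split in $K(\sqrt{a_{\mathfrak{p}_1\mathfrak{p}_w}})$ (so that $\psi|_{\mathfrak{p}_2}=1$, forcing $\mathbf{z}_w|_{\mathfrak{p}_2}=1$).

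The Selmer condition $\mathbf{z}_w\in\textup{Sel}_2(J^\varphi)$ is then verified place by place. Outside $T'\cup\{\mathfrak{p}_1,\mathfrak{p}_2\}$ the twist $J^\varphi$ has good reduction and $\mathbf{z}_w$ is unramified, so Lemma \ref{lema:Selmer_conditions}(i) applies. At each $v\in T'-\{w,\mathfrak{p}_2\}$ and at $v=\mathfrak{p}_2$, the local triviality of $\psi$ gives $\mathbf{z}_w|_v=1$. At $v=w$, Remark \ref{rem:unramf_twist_of_mult} ensures $w$ is multiplicative of type $\{i,j\}$ also for $J^\varphi$; the identity $\textup{val}_w(\mathbf{z}_w)=E_{ij}$ follows from $\textup{val}_w(a_{\mathfrak{p}_1\mathfrak{p}_w})=1$, while $\mathbf{z}_w|_w\cdot\delta_\varphi(D_i)|_w^{-1}$ is unramified at $w$, and an application of Lemma \ref{lem:selmer_condtions_at_mult_places}(i) reduces its membership in $\delta_\varphi(J^\varphi(K_w))$ to the local squareness at $w$ of $a_{\mathfrak{p}_1\mathfrak{p}_2}\cdot\prod_{l\neq i,j}(a_i-a_l)$, which is one more Frobenius condition in the field $K(\sqrt{\prod_{l\neq i,j}(a_i-a_l)})$ that is included in the stage-one Chebotarev class.

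The main obstacle is the verification at $v=\mathfrak{p}_1$. Since $\varphi$ is ramified at $\mathfrak{p}_1$, Lemma \ref{lem:restriction_inertia} identifies $\delta(J^\varphi(K_{\mathfrak{p}_1}))$ with the Lagrangian $\delta_\varphi(J[2])\subset H^1(G_{K_{\mathfrak{p}_1}},J[2])$ of dimension $2g$ inside a $4g$-dimensional space. Using Lemma \ref{changes_under_twist_lemma} to write $\delta_\varphi(D_{ij})=\delta(D_{ij})\cdot(\varphi\cup D_{ij})$, one computes
\[
\mathbf{z}_w|_{\mathfrak{p}_1}\cdot\delta_\varphi(D_{ij})|_{\mathfrak{p}_1}^{-1}=((\psi\varphi)|_{\mathfrak{p}_1}\cup D_{ij})\cdot\delta(D_{ij})|_{\mathfrak{p}_1}^{-1},
\]
which is unramified at $\mathfrak{p}_1$ because $\textup{val}_{\mathfrak{p}_1}(a_{\mathfrak{p}_1\mathfrak{p}_w})+\textup{val}_{\mathfrak{p}_1}(a_{\mathfrak{p}_1\mathfrak{p}_2})\equiv 0\pmod 2$ and $J$ has good reduction there. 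The task is then to force this unramified remainder into $\delta_\varphi(J[2])$. Since $\mathfrak{p}_1$ splits completely in $K(J[4])$, the Lagrangian $\delta_\varphi(J[2])\subset H^1(G_{K_{\mathfrak{p}_1}},J[2])$ is completely described by the restriction of $\varphi$ to $G_{K_{\mathfrak{p}_1}}$, and the prescribed Frobenius of $\mathfrak{p}_1$ in the Kummer extensions $K(\sqrt{a_k-a_l})$ in stage one are precisely what is needed to push the remainder into this Lagrangian. Showing that the combined Chebotarev class (items (a)--(c) in stage one, plus the additional Kummer-field condition coming from the check at $w$) is nonempty --- i.e., that the prescribed Frobenius behaviours in all of these auxiliary extensions are simultaneously consistent --- is the main technical content of the argument.
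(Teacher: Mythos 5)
There is a genuine gap, and it stems from a misattribution of where the crucial Chebotarev conditions need to live. You place all the auxiliary Frobenius constraints on $\mathfrak{p}_1$ in stage one, but the constraints you need cannot all be imposed there, and the decisive constraint must in fact be placed on $\mathfrak{p}_2$.

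First, your conditions (b) and (c) on $\mathfrak{p}_1$ are in tension with each other: since $K(\sqrt{a_k-a_l})\subseteq K(J[4])$ for all $k,l$ (the classes $\delta(D_k)$ become trivial over $K(J[4])$), requiring $\mathfrak{p}_1$ to split completely in $K(J[4])$ forces $\res_{\mathfrak{p}_1}(\chi_{a_k-a_l})=1$ for all $k,l$. So there is no freedom to ``prescribe'' the Frobenius of $\mathfrak{p}_1$ in these fields; condition (c) is either vacuous or contradictory. Moreover, under (b) one has $\res_{\mathfrak{p}_1}\delta(D)=1$ for all $D\in J[2]$, so $\delta_\varphi(J[2])|_{\mathfrak{p}_1}=\{\res_{\mathfrak{p}_1}(\varphi)\cup D: D\in J[2]\}$, and the Selmer condition at $\mathfrak{p}_1$ for $\mathbf{z}_w = \psi\cup D_{ij}$ becomes exactly the constraint $\res_{\mathfrak{p}_1}(\psi)=\res_{\mathfrak{p}_1}(\varphi)$. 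This is not something you can force by a stage-one Chebotarev choice, because $\varphi=\chi_{a_{\mathfrak{p}_1\mathfrak{p}_2}}$ only exists after $\mathfrak{p}_2$ has been picked. Second, and for the same reason, your claim that the squareness of $a_{\mathfrak{p}_1\mathfrak{p}_2}\cdot\prod_{l\neq i,j}(a_i-a_l)$ at $w$ is ``one more Frobenius condition $\dots$ included in the stage-one Chebotarev class'' is wrong: by Hilbert reciprocity applied to $(a_{\mathfrak{p}_1\mathfrak{p}_2},\,a_i-a_j)$, the symbol $\big(\tfrac{a_{\mathfrak{p}_1\mathfrak{p}_2}}{\mathfrak{p}_w}\big)$ depends on the Frobenius of $\mathfrak{p}_2$ in $K(\sqrt{a_i-a_j})$, a stage-two condition.

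The paper's proof avoids both problems by inverting your logic: it imposes only $\textup{Frob}_{\mathfrak{p}_1}=\textup{Frob}_{\mathfrak{p}_w}^{-1}$ in $\Gal(K_{\mathfrak{m}}/K)$ (no splitting in $K(J[4])$, no conditions in the Kummer fields), then places the auxiliary condition (its condition (iii)) on $\mathfrak{p}_2$'s Frobenius in $K(\sqrt{a_i-a_j})$ so as to control $\res_w(\varphi)$ via reciprocity. The Selmer condition at $\mathfrak{p}_1$ is then not independently arranged: it falls out of a second reciprocity computation, using the fact that $\mathfrak{p}_1$ and $\mathfrak{p}_w$ have matching Frobenius in $K_{\mathfrak{m}}$ to transfer data from $\mathfrak{p}_w$ to $\mathfrak{p}_1$, identifying $\res_{\mathfrak{p}_1}(\mathbf{z}_w)$ with $\res_{\mathfrak{p}_1}\delta_\varphi(D_{ij})$. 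You correctly flag the consistency check as the main technical content, but the mechanism you propose for meeting it (stacking constraints on $\mathfrak{p}_1$) cannot work; the missing idea is that the decisive constraint sits on $\mathfrak{p}_2$, and that the condition at $\mathfrak{p}_1$ is then forced by reciprocity rather than prescribed by Chebotarev.
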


\begin{proof}
Let $\mathfrak{m}$ be the formal product of $8$ and all places $v\in T'- \{w\}$, and denote by $K_\mathfrak{m}/K$ the corresponding ray class field of conductor $\mathfrak{m}$. Choose a prime $\mathfrak{p}_1\notin T'$ such that $\textup{Frob}_{\mathfrak{p}_1}=\textup{Frob}_{\mathfrak{p}_w}^{- 1}$ in $\textup{Gal}(K_\mathfrak{m}/K)$ (the existence of infinitely many such primes is guaranteed by the Chebotarev density theorem). By construction, the ideal $\mathfrak{p}_1\mathfrak{p}_w$ is principal, with a generator $a_{\mathfrak{p}_1\mathfrak{p}_w}$ that is a square locally at all $v\in T'-\{w\}$. 

Having fixed $\mathfrak{p}_1$ and $a_{\mathfrak{p}_1\mathfrak{p}_w},$ choose a prime $\mathfrak{p}_2\notin T'\cup \{\mathfrak{p}_1\}$ such that the following hold:
\begin{itemize}
\item[(i)] we have $\textup{Frob}_{\mathfrak{p}_2} = \textup{Frob}_{\mathfrak{p}_w} =\textup{Frob}_{\mathfrak{p}_1}^{-1}$ in $\textup{Gal}(K_\mathfrak{m}/K)$, 
\item[(ii)] $\mathfrak{p}_2$ splits in $K(\sqrt{a_{\mathfrak{p}_1\mathfrak{p}_w}})/K$,
\item[(iii)] identifying $\textup{Gal}(K(\sqrt{a_i-a_j})/K)$ with $\{\pm 1\}$ in the obvious way, we have the equality 
\[\textup{Frob}_{\mathfrak{p}_2}=\textup{Frob}_{\mathfrak{p}_1}\cdot\left(\frac{\prod_{r\neq i, j}(a_i-a_r)}{\mathfrak{p}_w}\right)\]
in $\textup{Gal}(K(\sqrt{a_i-a_j})/K).$
\end{itemize}
Note that $K_\mathfrak{m}/K$ is unramified at $w$ and $\mathfrak{p}_1$, $K(\sqrt{a_i-a_j})/K$ is ramified at $w$ but unramified at $\mathfrak{p}_1$, and $K(\sqrt{a_{\mathfrak{p}_1\mathfrak{p}_w}})/K$ is ramified at $\mathfrak{p}_1.$ It follows from this that the Galois group of the compositum of these three extensions is the product of their individual Galois groups. Thus, there are infinitely many choices for the prime $\mathfrak{p}_2$ by the Chebotarev density theorem. Henceforth, we fix one such choice.

By (i) the ideal $\mathfrak{p}_1\mathfrak{p}_2$ is principal, with a generator $a_{\mathfrak{p}_1\mathfrak{p}_2}$ that is a square locally at all $v\in T'-\{w\}.$ By construction, properties (1) and (2) of the statement are satisfied by the associated characters $\varphi=\chi_{a_{\mathfrak{p}_1\mathfrak{p}_2}}$ and $\psi=\chi_{a_{\mathfrak{p}_1\mathfrak{p}_w}}$. We now turn to property (3) for the associated cocycle $\textbf{z}_{w}=\psi\cup D_{ij}$. That this is unramified at all $v\notin \{w, \mathfrak{p}_1\}$ and has trivial restriction to $G_{K_v}$ for all $v\in (T'\cup \{\mathfrak{p}_2\})-\{w\}$ follows from the corresponding properties of the character $\psi$. Further, since $\psi$ is ramified at $w$, we see that $\textup{val}_w(\textbf{z}_w)=E_{i, j}$. It remains to show that $\textbf{z}_w$ lies in the $2$-Selmer group of $J^{a_{\mathfrak{p}_1\mathfrak{p}_2}}$. That it satisfies the Selmer conditions at all $v\in (T'\cup \{\mathfrak{p}_2\})- \{w\}$ is immediate, and for the remaining places outside $\{w, \mathfrak{p}_1\}$, the cocycle $\textbf{z}_w$ is unramified at $v$ and $J^{a_{\mathfrak{p}_1\mathfrak{p}_2}}$ has good reduction at $v$. It remains to show that the Selmer conditions are satisfied at $w$ and $\mathfrak{p}_1$. We begin with the following claim.

\textbf{Claim:} we have 
\begin{equation} 
\label{restriction_at_w_char}
\res_w(\varphi) = \prod_{r \neq i, j} \res_w(\chi_{a_i-a_r}),
\end{equation}
and 
\begin{equation} 
\label{restriction_at_p_1_char}
\res_{\mathfrak{p}_1}(\varphi) = \res_{\mathfrak{p}_1}(\psi) \cdot \res_{\mathfrak{p}_1}(\chi_{- 1}) \cdot \prod_{r\neq i, j} \res_{\mathfrak{p}_1}(\chi_{a_i-a_r}). 
\end{equation}
We begin by establishing \eqref{restriction_at_w_char}. Note that the extension $K(\sqrt{a_i-a_j})/K$ is unramified outside $T'$, and is ramified at $w$ by our assumption that $\textup{val}_w(a_i-a_j)=1$. The product formula for Hilbert symbols gives
\[
1=(a_{\mathfrak{p}_1\mathfrak{p}_2},a_i-a_j)_w(a_{\mathfrak{p}_1\mathfrak{p}_2},a_i-a_j)_{\mathfrak{p}_1}(a_{\mathfrak{p}_1\mathfrak{p}_2},a_i-a_j)_{\mathfrak{p}_2} =\left(\frac{a_{\mathfrak{p}_1\mathfrak{p}_2}}{\mathfrak{p}_{w}}\right)\cdot \left(\frac{\prod_{r\neq i, j}(a_i-a_r)}{\mathfrak{p}_w}\right),
\]
where the second equality follows from property (iii). Equation \eqref{restriction_at_w_char} follows.

Next, again using the product formula for Hilbert symbols, we compute
\[
1=(a_{\mathfrak{p}_1\mathfrak{p}_w},a_{\mathfrak{p}_1\mathfrak{p}_2})_w(a_{\mathfrak{p}_1\mathfrak{p}_w},a_{\mathfrak{p}_1\mathfrak{p}_2})_{\mathfrak{p}_1}(a_{\mathfrak{p}_1\mathfrak{p}_w},a_{\mathfrak{p}_1\mathfrak{p}_2})_{\mathfrak{p}_2}=\left(\frac{a_{\mathfrak{p}_1\mathfrak{p}_2}}{\mathfrak{p}_{w}}\right)(a_{\mathfrak{p}_1\mathfrak{p}_w},a_{\mathfrak{p}_1\mathfrak{p}_2})_{\mathfrak{p}_1}.
\]
Rearranging this and multiplying by the trivial symbol $(a_{\mathfrak{p}_1\mathfrak{p}_w},-a_{\mathfrak{p}_1\mathfrak{p}_w})_{\mathfrak{p}_1}$ gives 
\[
\left(\frac{a_{\mathfrak{p}_1\mathfrak{p}_2}a_{\mathfrak{p}_1\mathfrak{p}_w}/\pi_1^2}{\mathfrak{p}_{1}}\right) = \left(\frac{a_{\mathfrak{p}_1\mathfrak{p}_2}}{\mathfrak{p}_{w}}\right) \left(\frac{- 1}{\mathfrak{p}_1}\right) = \left(\frac{\prod_{r\neq i, j}(a_i-a_r)}{\mathfrak{p}_w}\right) \left(\frac{- 1}{\mathfrak{p}_1}\right),
\]
where $\pi_1$ is a uniformiser at $\mathfrak{p}_1$ and the second equality was established above. The square root of $\prod_{r\neq i, j}(a_i-a_r)$ lies in the maximal quadratic subextension of $K_\mathfrak{m}$ and, by construction, the Frobenius element at $\mathfrak{p}_w$ in this extension agrees with that of $\mathfrak{p}_1,$ so we can replace $\mathfrak{p}_w$ with $\mathfrak{p}_1$ on the right hand side of the above displayed formula. This establishes \eqref{restriction_at_p_1_char}. 

We now show that $\textbf{z}_w$ satisfies the Selmer conditions at $\mathfrak{p}_1$. More precisely, we claim that 
\[
\res_{\mathfrak{p}_1}(\textbf{z}_w) = \res_{\mathfrak{p}_1}\delta_{a_{\mathfrak{p}_1\mathfrak{p}_2}}(D_{ij}).
\]
Indeed, we have
\begin{align*}
\res_{\mathfrak{p}_1}\delta_{a_{\mathfrak{p}_1\mathfrak{p}_2}}(D_{ij}) 
&=\res_{\mathfrak{p}_1}\delta(D_{ij}) \cdot \left(\res_{\mathfrak{p}_1}(\varphi) \cup D_{ij}\right) \\ 
&=\res_{\mathfrak{p}_1}\delta(D_{ij}) \cdot \res_{\mathfrak{p}_1}(\textbf{z}_w) \cdot \left((\res_{\mathfrak{p}_1}\chi_{-\prod_{r \neq i, j} (a_i-a_r)}) \cup D_{ij}\right),
\end{align*}
where the first equality follows from Lemma \ref{changes_under_twist_lemma} (upon converting between additive and multiplicative notation) and the second from \eqref{restriction_at_p_1_char}. Now Lemma \ref{descent_lemma} identifies $\delta(D_{ij}) = \delta(D_i) \cdot \delta(D_j)$ with the $2g+1$ tuple $\bf{x}$ with coordinates 
\[
\bf{x}_{\textit{t}} =
\begin{cases} 
(a_i-a_t)(a_j-a_t)~~&~~t\neq i, j, \\ 
-\prod_{r\neq i, j}(a_i-a_r)~~&~~ t=i, \\ 
-\prod_{r\neq i, j}(a_j-a_r)~~&~~t=j.
\end{cases}
\]
From this formula, we see that $\delta(D_{ij})$ is unramified at $w$. In particular, the cocycle $\delta(D_{ij})$ factors through $\textup{Gal}(K_\mathfrak{m}/K)$. Moreover, since $\textup{val}_w(a_i-a_j) = 1$, for all $t\neq i, j$ we have $a_i-a_t\equiv a_j-a_t \mod\mathfrak{p}_w$. Thus, for $t\neq i, j$, the quantity $(a_i-a_t)(a_j-a_t)$ is a square locally at $w$. We conclude that 
\[
\res_w\delta(D_{ij})=(\res_{w}\chi_{-\prod_{r\neq i, j}(a_i-a_r)})\cup D_{ij}.
\] 
Since $\textup{Frob}_{\mathfrak{p}_1}$ agrees with $\textup{Frob}_{\mathfrak{p}_w}$ in the maximal multiquadratic subextension of $K_\mathfrak{m}/K$, we deduce that $\res_{\mathfrak{p}_1}\delta(D_{ij})=(\res_{\mathfrak{p}_1}\chi_{-\prod_{r\neq i, j}(a_i-a_r)})\cup D_{ij}$, which completes the proof of the claim. 

It remains to treat the place $w$. We will show that $\res_w(\textbf{z}_w) \cdot \res_w \delta_{a_{\mathfrak{p}_1\mathfrak{p}_2}}(D_i)$ satisfies the Selmer conditions for $J^{a_{\mathfrak{p}_1\mathfrak{p}_2}}$ at $w$. Using Lemma \ref{descent_lemma}, we see that $\textbf{z}_w \cdot \delta_{a_{\mathfrak{p}_1\mathfrak{p}_2}}(D_i)$ corresponds to the $2g+1$ tuple $\bf{y}$, whose $t$-coordinate is given by
\[
\bf{y}_{\textit{t}} = 
\begin{cases}
a_{\mathfrak{p}_1\mathfrak{p}_2}(a_i-a_t)~~&~~t\neq i, j, \\
a_{\mathfrak{p}_1\mathfrak{p}_w}(a_i-a_j)\prod_{r\neq i, j}(a_i-a_r)~~&~~ t=i, \\ a_{\mathfrak{p}_1\mathfrak{p}_w}a_{\mathfrak{p}_1\mathfrak{p}_2}(a_i-a_j)~~&~~t=j.
\end{cases}
\]
Each coordinate has even valuation so, by Lemma \ref{lem:selmer_condtions_at_mult_places}(i), $\textbf{z}_w \cdot \delta_{a_{\mathfrak{p}_1\mathfrak{p}_2}}(D_i)$ satisfies the Selmer conditions provided $\bf{y}_{\textit{i}}\bf{y}_{\textit{j}}$ is a square locally at $w$. That this is the case follows from \eqref{restriction_at_w_char}.
\end{proof}

Recall that we write $W = \{w_1, \dots, w_{4g^2 + 2g - 1}\}$ and $W' = \{w_1', \dots, w_{2g}'\}$ for the places guaranteed by the genericity of $J$ (see Definition \ref{def:places_in_aux_twist}). For each $1 \leq s \leq 4g^2 + 2g - 1$, let $i_s<j_s$ be elements of $\{1, \dots, 2g + 1\}$ such that $w_s$ has type $\{i_s, j_s\}$.  

\begin{corollary} 
\label{cor:including_zs}
There is $\kappa_0 \in K^{\times}$ such that:
\begin{itemize}
\item[(1)] $\chi_{\kappa_0}$ is trivial at all $v\in T -W$, and is unramified over $W$,
\item[(2)] the $2$-Selmer group of $J^{\kappa_0}$ contains elements $\textbf{z}_1, \dots, \textbf{z}_{4g^2 + 2g - 1}$ satisfying \eqref{eZiSpec1}--\eqref{eZiSpec4}. Moreover, for $s \in \{1, \dots, 4g^2 + 2g - 1\}$, each $\textbf{z}_s$ has the form $\psi_s\cup D_{i_{s}, j_{s}}$ for a quadratic character $\psi_s$,
\item[(3)] there is a prime $\mathfrak{p}_0\notin T$ such that $\chi_{\kappa_0}$ is ramified at $\mathfrak{p}_0$ and $\res_{\mathfrak{p}_0}(\textbf{z}_i) = 1$ for $i \in \{1, \dots, 4g^2 + 2g - 1\}$.
\end{itemize}
\end{corollary}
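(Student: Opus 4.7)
The plan is to build $\kappa_0$ and the cocycles $\textbf{z}_s$ one at a time, by iterating Proposition \ref{finding_selmer_basis} once for each of the $4g^2+2g-1$ multiplicative places $w_s\in W$. Processing $s=1,\dots,4g^2+2g-1$ in order, I let $T'_s$ be the union of $T$ with every prime $\mathfrak{p}_{1,t},\mathfrak{p}_{2,t}$ produced at earlier steps, and apply the Proposition with $w=w_s$ and $T'=T'_s$. This yields primes $\mathfrak{p}_{1,s},\mathfrak{p}_{2,s}\notin T'_s$, elements $a_{\mathfrak{p}_{1,s}\mathfrak{p}_{2,s}},\,a_{\mathfrak{p}_{1,s}\mathfrak{p}_{w_s}}\in K^\times$, and a cocycle $\textbf{z}_s=\psi_s\cup D_{i_s,j_s}$ with $\psi_s=\chi_{a_{\mathfrak{p}_{1,s}\mathfrak{p}_{w_s}}}$ that lies in $\mathrm{Sel}_2(J^{a_{\mathfrak{p}_{1,s}\mathfrak{p}_{2,s}}})$. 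Once the iteration is complete I set $\kappa_0:=\prod_{s=1}^{4g^2+2g-1}a_{\mathfrak{p}_{1,s}\mathfrak{p}_{2,s}}$.

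Condition (1) and the valuation requirements \eqref{eZiSpec1}--\eqref{eZiSpec4} of (2) then fall out of the construction: each factor of $\kappa_0$ is a local square at every $v\in T-W$ and is unramified on all of $W$ by the Proposition, while $\textbf{z}_s$ is ramified only at $\mathfrak{p}_{1,s}$ and $w_s$ with $\textup{val}_{w_s}(\textbf{z}_s)=E_{i_s,j_s}$; unpacking this against the list of types of the $w_s$ specified in Definition \ref{def:places_in_aux_twist} gives the four stated congruences by routine case checking. For (3) I take $\mathfrak{p}_0=\mathfrak{p}_{2,1}$, so that $\chi_{\kappa_0}$ is ramified at $\mathfrak{p}_0$; triviality of $\res_{\mathfrak{p}_0}(\textbf{z}_1)$ is part of the Proposition at step $1$, and for $s>1$ we have $\mathfrak{p}_0\in T'_s$, so the triviality clause of the Proposition at step $s$ gives $\res_{\mathfrak{p}_0}(\textbf{z}_s)=1$.

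The main obstacle is upgrading each $\textbf{z}_s$ from an element of $\mathrm{Sel}_2(J^{a_{\mathfrak{p}_{1,s}\mathfrak{p}_{2,s}}})$ to an element of $\mathrm{Sel}_2(J^{\kappa_0})$. The Selmer conditions for the two twists coincide at any $v$ where $\kappa_0/a_{\mathfrak{p}_{1,s}\mathfrak{p}_{2,s}}$ is a local square; I verify this at every place in $T\cup\{w_s,\mathfrak{p}_{1,s},\mathfrak{p}_{2,s}\}$ via a Frobenius computation, observing that the ray class field $K_{\mathfrak{m}_t}$ used in the proof of the Proposition contains $K(\sqrt{a_{\mathfrak{p}_{1,s}\mathfrak{p}_{2,s}}})$ for $s<t$ (since $\mathfrak{p}_{1,s},\mathfrak{p}_{2,s}\in T'_t-\{w_t\}$), and that the Frobenius conditions $\textup{Frob}_{\mathfrak{p}_{1,t}}=\textup{Frob}_{\mathfrak{p}_{w_t}}^{-1}$ and $\textup{Frob}_{\mathfrak{p}_{2,t}}=\textup{Frob}_{\mathfrak{p}_{w_t}}$ in $\textup{Gal}(K_{\mathfrak{m}_t}/K)$ then force triviality in this subextension, because $a_{\mathfrak{p}_{1,s}\mathfrak{p}_{2,s}}$ is a local square at $w_t$. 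The remaining places are the primes $\mathfrak{p}_{1,t},\mathfrak{p}_{2,t}$ with $t\neq s$, where $J$ has good reduction but $\kappa_0$ is ramified; by Lemma \ref{lema:Selmer_conditions}(iii) the only unramified class in the twisted Selmer condition is the trivial one, so I need $\res_v(\textbf{z}_s)=1$. For $t<s$ this is immediate from the triviality clause of the Proposition applied at step $s$, using $\mathfrak{p}_{1,t},\mathfrak{p}_{2,t}\in T'_s$. For $t>s$ an entirely analogous Frobenius computation — now inside the subextension $K(\sqrt{a_{\mathfrak{p}_{1,s}\mathfrak{p}_{w_s}}})\subseteq K_{\mathfrak{m}_t}$, and using that $a_{\mathfrak{p}_{1,s}\mathfrak{p}_{w_s}}$ is a local square at $w_t$ — forces $\psi_s$, and hence $\textbf{z}_s$, to restrict trivially to $G_{K_v}$. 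So no extra Chebotarev input beyond what the Proposition already delivers is required; the ordering of the iteration and the uniform choice to enlarge $T'$ at each step are what make the Selmer conditions compatible across all the steps.
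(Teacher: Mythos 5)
Your overall architecture matches the paper's: iterate Proposition~\ref{finding_selmer_basis} over the places of $W$, enlarging $T'$ at each step, take $\chi_{\kappa_0}=\prod_s\varphi_s$, and then argue that each $\textbf{z}_s$ lands in $\textup{Sel}_2(J^{\kappa_0})$. Your choice $\mathfrak{p}_0=\mathfrak{p}_{2,1}$ is a small simplification over the paper's $\mathfrak{p}_{2,4g^2+2g-1}$ and works directly from the Proposition's triviality clause at later steps. Your handling of $\mathfrak{p}_{1,t},\mathfrak{p}_{2,t}$ for $t\neq s$ is sound: for $t<s$ it is the Proposition's triviality clause at step $s$, and for $t>s$ your Frobenius computation inside $K(\sqrt{a_{\mathfrak{p}_{1,s}\mathfrak{p}_{w_s}}})\subseteq K_{\mathfrak{m}_t}$ is a valid substitute for the paper's Hilbert reciprocity argument (though it reaches inside the Proposition's proof rather than using only its statement).

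There is, however, a genuine gap at the places $w_u$ with $u\neq s$. You claim that $\kappa_0/a_{\mathfrak{p}_{1,s}\mathfrak{p}_{2,s}}$ is a local square at every $v\in T$, but at $v=w_u$ one has
\[
\res_{w_u}\bigl(\chi_{\kappa_0}\cdot\varphi_s^{-1}\bigr)=\prod_{t\neq s}\res_{w_u}(\varphi_t)=\res_{w_u}(\varphi_u),
\]
since $\varphi_t$ is trivial at $w_u$ for all $t\neq u$ (as $w_u\in T'_t-\{w_t\}$). By equation~\eqref{restriction_at_w_char}, $\res_{w_u}(\varphi_u)=\prod_{r\neq i_u,j_u}\res_{w_u}(\chi_{a_{i_u}-a_r})$, which is in general a nontrivial unramified character; so the Selmer conditions for $J^{\kappa_0}$ and $J^{\varphi_s}$ do \emph{not} coincide at $w_u$, and your Frobenius computation (which only controls the behaviour of $\varphi_s$ at the primes $\mathfrak{p}_{i,t}$) does not address this. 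The correct argument at $w_u$ is different in kind: $\psi_s$ is trivial at $w_u$ (because $w_u\in T'_s-\{w_s\}$), so $\textbf{z}_s=\psi_s\cup D_{i_s,j_s}$ restricts to the trivial class at $w_u$ and therefore satisfies whatever local Selmer condition is imposed there. This local-triviality step is the one your write-up is missing and the one the paper uses at exactly this point.
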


\begin{proof}
To begin, we apply Proposition \ref{finding_selmer_basis} with $w=w_1$ and $T'=T$. This produces a quadratic character $\varphi_1$ which is trivial at $v\in T- \{w_1\}$ and is unramified at $w_1$, and an element $\textbf{z}_{1}=\psi_1\cup D_{i_{1}, j_{1}}\in \textup{Sel}_2(J^{\varphi_1})$ with $\textup{val}_{w_1}(\textbf{z}_1) = E_{i_{1}, j_{1}}$ and $\res_v(\textbf{z}_1) = 1$ for all $v\in T- \{w_1\}$. Let $T_1$ be the union of $T$ and the two additional primes $\mathfrak{p}_{1,1}$ and $\mathfrak{p}_{2,1}$ where $\varphi_1$ ramifies (as in the statement of Proposition \ref{finding_selmer_basis}, $\mathfrak{p}_{1, 1}$ ramifies in $\psi_1$).

We now apply Proposition \ref{finding_selmer_basis} with $w=w_2$ and $T' = T_1$. This gives a quadratic character $\varphi_2$ which is trivial at $v\in T_1-\{w_2\}$ and unramified at $w_2$, and an element $\textbf{z}_2=\psi_2\cup D_{i_{2}, j_{2}}\in \textup{Sel}_2(J^{\varphi_2})$ with $\textup{val}_{w_2}(\textbf{z}_2) = E_{i_{2}, j_{2}}$ and $\res_v(\textbf{z}_2) = 1$ for all $v \in T_1-\{w_2\}$. Let $\mathfrak{p}_{1,2}$ and $\mathfrak{p}_{2,2}$ be the two primes where $\varphi_2$ ramifies (noting that $\mathfrak{p}_{1,2}$ ramifies in $\psi_2$).

\textbf{Claim:} for $i=1,2$, we have $\res_{\mathfrak{p}_{i,2}}(\varphi_1) = 1 = \res_{\mathfrak{p}_{i,2}}(\psi_1)$.

Indeed, by reciprocity and properties of the characters $\psi_1$ and $\psi_2$, we have
\[
1 = \prod_v (\psi_1, \psi_2)_v = (\psi_1, \psi_2)_{\mathfrak{p}_{1,2}}.
\] 
It follows that $\res_{\mathfrak{p}_{1,2}}(\psi_1) = 1$. Given this, a similar argument applying reciprocity to the symbol $(\psi_1, \varphi_2)$ shows that $\res_{\mathfrak{p}_{2,2}}(\psi_1) = 1$. Similarly applying reciprocity first to $(\varphi_1, \psi_2)$ and then to $(\varphi_1, \varphi_2)$ completes the proof of the claim. 

We next prove that both $\textbf{z}_1$ and $\textbf{z}_2$ are in the $2$-Selmer group of $J^{\varphi_1 \cdot \varphi_2}.$ Indeed, $\varphi_2$ is trivial at all $v\in T_1- \{w_2\}$, so $\textbf{z}_1$ satisfies the Selmer conditions at all such places (since it does so for the twist $J^{\varphi_1}$). At the places $w_2$, $\mathfrak{p}_{1,2}$ and $\mathfrak{p}_{2,2}$, the character $\psi_1$ is trivial, hence so is $\textbf{z}_1$. At all remaining places, $\textbf{z}_1$ is unramified and $J^{\varphi_1 \cdot \varphi_2}$ has good reduction. 

To see that $\textbf{z}_2$ lies in the $2$-Selmer group of $J^{\varphi_1 \cdot \varphi_2},$ we argue similarly. Using the claim we have that $\varphi_1$ is a square locally at $w_2$, $\mathfrak{p}_{1,2}$ and $\mathfrak{p}_{2,2}$. Thus $\textbf{z}_2$ satisfies the Selmer conditions at these three places (since it does so for the twist $J^{\varphi_2}$). For the remaining places we either have that $\textbf{z}_2$ is trivial locally, or (at places outside $T_1 \cup \{\mathfrak{p}_{1,2},\mathfrak{p}_{2,2}\}$) that $\textbf{z}_2$ is unramified while $J^{\varphi_1 \cdot \varphi_2}$ has good reduction. 

We now set $T_2 = T_1 \cup \{\mathfrak{p}_{1,2},\mathfrak{p}_{2,2}\}$, and continue similarly, using Proposition \ref{finding_selmer_basis} to inductively define characters $\varphi_i$, and associated $1$-cocycles $\textbf{z}_i$, for $i = 1, \dots, 4g^2 + 2g - 1$. We then set $\kappa_0\in K^{\times}$ to be any element whose associated quadratic character is $\prod_{i = 1}^{4g^2 + 2g - 1} \varphi_i$. Arguing as in the case $i = 2$ above, we see that this satisfies the conditions (1) and (2) of the statement. Condition $(3)$ is satisfied by taking $\mathfrak{p}_0$ to be the prime $\mathfrak{p}_{2, 4g^2 + 2g - 1}$ (in the obvious notation). 
\end{proof}

Fix $\kappa_0$  as in Corollary \ref{cor:including_zs}. Let $\mathbf{T}$ be the union of $T$ and all primes where $\chi_{\kappa_0}$ ramifies. For $s\in \{1, \dots, 4g^2 + 2g - 1\}$, we thus have elements $\textbf{z}_s$ of the form $\textbf{z}_s=\psi_s\cup D_{i_{s}, j_{s}}$ as in the statement of Corollary \ref{cor:including_zs}. By construction, the characters $\psi_s$ are linearly independent. Write $K_s$ for the fixed field of the kernel of $\psi_s$. Let $F$ be the compositum of $K_s$ for all $s$ and write $E=K(J[4])F$. Note that $E/K$ is a multiquadratic extension. We extend $\{\psi_1, \dots, \psi_{4g^2 + 2g - 1}\}$ to a basis for $\textup{Hom}(\textup{Gal}(E/K), \mu_2)$ by adding in characters $\eta_1, \dots, \eta_m.$ For each $r \in \{ 1, \dots, m\}$, fix a prime $\mathfrak{p}_r \notin \mathbf{T}$ that splits completely in $F/K$ and is such that $\res_{\mathfrak{p}_r}(\eta_r)$ is non-trivial, yet $\res_{\mathfrak{p}_r}(\eta_{s})$ is trivial for all $s \neq r$.
Since, for all $s\in \{1, \dots, 4g^2 + 2g - 1\}$, and all $w'\in W'$, $\psi_s$ has trivial restriction to $w'$, we see that $F/K$ is unramified over $W'$.

\begin{mydef}
We call an element $\lambda \in K^\times/K^{\times 2}$ \textit{promising} if: $\lambda$ has even valuation over $W'$, is trivial at all $v \in \mathbf{T}- W'$, has odd valuation at $\mathfrak{p}_1, \dots, \mathfrak{p}_m$, and is such that if $\lambda$ has odd valuation at a prime $\mathfrak{p}\notin \mathbf{T}$, then $\mathfrak{p}$ splits completely in $F/K$. 
\end{mydef}

For an element $\lambda$, denote by $\textup{Sel}_2^E(J^\lambda)$ the subgroup of $\textup{Sel}_2(J^\lambda)$ consisting of elements with trivial restriction to $E$. 

\begin{remark} 
\label{rem:lin_disjoint}
Let $\lambda$ be promising. By construction, $\kappa_0$ has odd valuation at a prime $\mathfrak{p}_0 \notin T$ such that $\res_{\mathfrak{p}_0}(\textbf{z}_s) = 1$ for all $s\in \{1, \dots, 4g^2 + 2g - 1\}$. This prime is unramified in $E/K$. Since $\lambda$ is promising, we have $\res_{\mathfrak{p}_0}(\lambda) = 1$, hence $\kappa_0 \lambda$ has odd valuation at $\mathfrak{p}_0$ also. In particular, the extensions $E/K$ and $K(\sqrt{\kappa_0\lambda})/K$ are linearly disjoint.
\end{remark}

\begin{lemma} 
\label{lem:systematic_subspace}
Suppose that $\lambda$ is promising. 
\begin{itemize}
 \item[\textup{(1)}] Then $\textup{Sel}_2^{E}(J^{\kappa_0 \lambda})$ is generated by $\textbf{z}_1, \dots, \textbf{z}_{4g^2 + 2g - 1}$.
 \item[\textup{(2)}] Suppose that every element of $\textup{Sel}_2(J^{\kappa_0 \lambda})$ that is unramified over $W'$ is contained in $\textup{Sel}_2^{E}(J^{\kappa_0 \lambda})$. Then $\textup{Sel}_2(J^{\kappa_0 \lambda})$ is generated by $\delta_{\kappa_0 \lambda}(J[2])$ and $\textbf{z}_1, \dots, \textbf{z}_{4g^2 + 2g - 1}$.
\end{itemize}
\end{lemma}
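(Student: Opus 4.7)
The plan is to prove both parts by a mixture of inflation--restriction, careful tracking of the local Selmer conditions at the places $\mathfrak{p}_r$ and $w_s$, and a valuation-vector comparison at $W'$.

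For part (1), I would first show containment $\supseteq$. Each $\textbf{z}_s = \psi_s \cup D_{i_s, j_s}$ has trivial restriction to $G_E$, since $\psi_s$ factors through $\textup{Gal}(K_s/K)$ and $K_s \subseteq F \subseteq E$. We already know from Corollary \ref{cor:including_zs} that $\textbf{z}_s \in \textup{Sel}_2(J^{\kappa_0})$, and the Selmer conditions for $J^{\kappa_0}$ and $J^{\kappa_0\lambda}$ agree at every place where $\lambda$ is locally a square. The places where they differ are exhausted by $W'$, the primes $\mathfrak{p}_1, \dots, \mathfrak{p}_m$, and the remaining primes outside $\mathbf{T}$ where $\lambda$ has odd valuation; at each such place $\psi_s$ restricts trivially (for $w_i' \in T$, built into the construction in Proposition \ref{finding_selmer_basis}; for each of the remaining primes $\mathfrak{p}$, by splitting of $\mathfrak{p}$ in $F \supseteq K_s$), so $\textbf{z}_s \in \textup{Sel}_2(J^{\kappa_0\lambda})$.

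For the reverse inclusion, let $\alpha \in \textup{Sel}_2^E(J^{\kappa_0\lambda})$. Since all $a_i \in K$, we have $J[2] \subseteq J(K)$ and the $G_K$-module $J[2]$ is trivial; combined with $E \supseteq K(J[2])$, inflation--restriction yields a unique decomposition
\[
\alpha \;=\; \sum_{s=1}^{4g^2+2g-1} \psi_s \cup x_s \;+\; \sum_{r=1}^{m} \eta_r \cup y_r, \qquad x_s, y_r \in J[2].
\]
I would then squeeze the coefficients via two sets of local conditions. At each $\mathfrak{p}_r$, every $\psi_s$ is trivial (as $\mathfrak{p}_r$ splits in $F$), and every $\eta_{r'}$ for $r' \neq r$ is trivial at $\mathfrak{p}_r$ by choice; thus $\textup{res}_{\mathfrak{p}_r}(\alpha) = \textup{res}_{\mathfrak{p}_r}(\eta_r) \cup y_r$, which is unramified. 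On the other hand, $\chi_{\kappa_0 \lambda}$ is ramified at $\mathfrak{p}_r$ (since $\kappa_0$ is a unit and $\lambda$ has odd valuation there) and $J$ has good reduction, so Lemma \ref{lem:restriction_inertia} combined with Lemma \ref{lema:Selmer_conditions}(i) implies $\delta_{\kappa_0\lambda}(J[2]) \cap H^1_{\textup{ur}}(G_{K_{\mathfrak{p}_r}}, J[2]) = 0$. The Selmer condition therefore forces $\textup{res}_{\mathfrak{p}_r}(\alpha) = 0$, and since $\textup{res}_{\mathfrak{p}_r}(\eta_r) \neq 0$ we get $y_r = 0$. At each $w_s \in W$ (which remains multiplicative of type $\{i_s, j_s\}$ for $J^{\kappa_0\lambda}$ because $\kappa_0\lambda$ is locally square there), the other $\psi_{s'}$ and the $\eta_r$ all restrict trivially, so $\textup{res}_{w_s}(\alpha) = \textup{res}_{w_s}(\psi_s) \cup x_s$. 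Translating into tuples, this cocycle has $w_s$-valuation vector exactly equal to the support of $x_s$ under $J[2] \cong \Sigma$. Lemma \ref{lem:selmer_condtions_at_mult_places}(iii) forces this vector to lie in the line $\mathbb{F}_2 \cdot E_{i_s, j_s}$, giving $x_s \in \{0, D_{i_s, j_s}\}$ and hence $\alpha \in \textup{span}\{\textbf{z}_s\}$.

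For part (2), the idea is to use elements of $\delta_{\kappa_0\lambda}(J[2])$ to trivialize the ramification of a general Selmer element at $W'$. Since $\kappa_0$ is trivial at $W' \subseteq T - W$ and $\lambda$ has even valuation over $W'$, the character $\chi_{\kappa_0\lambda}$ is unramified at every $w_j'$, so Lemmas \ref{descent_lemma} and \ref{changes_under_twist_lemma} give
\[
\textup{val}_{w_j'}\!\bigl(\delta_{\kappa_0\lambda}(D_i)\bigr) \;=\; \begin{cases} E_{j, 2g+1} & i = j, \\ 0 & i \neq j. \end{cases}
\]
Hence the map $\delta_{\kappa_0\lambda}(J[2]) \to \bigoplus_{j=1}^{2g} \mathbb{F}_2 \cdot E_{j, 2g+1}$ reading off $w_j'$-valuation vectors is an isomorphism. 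Given $\alpha \in \textup{Sel}_2(J^{\kappa_0\lambda})$, I would subtract the unique $\beta \in \delta_{\kappa_0\lambda}(J[2])$ matching $\alpha$'s valuation data over $W'$, making $\alpha - \beta$ unramified at each $w_j'$. The hypothesis of (2) then places $\alpha - \beta$ in $\textup{Sel}_2^E(J^{\kappa_0\lambda})$, and part (1) finishes the proof.

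The main obstacle is the $\mathfrak{p}_r$ step in (1): one must ensure that, even after allowing the extra ambient dimensions $\eta_r \cup J[2]$ of $H^1(\textup{Gal}(E/K), J[2])$, the Selmer condition rigidly kills every $y_r$. This rests on the delicate interplay between the ramification of $\chi_\lambda$ at each $\mathfrak{p}_r$ (arranged by demanding $\lambda$ be promising) and the non-triviality of $\textup{res}_{\mathfrak{p}_r}(\eta_r)$ (arranged in the choice of the $\mathfrak{p}_r$); both are essential, and the whole setup of the preceding paragraphs is designed precisely to make this step work.
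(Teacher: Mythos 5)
Your proof is correct and follows essentially the same route as the paper's: $\supseteq$ by checking the Selmer conditions at the few places where $J^{\kappa_0}$ and $J^{\kappa_0\lambda}$ can differ; $\subseteq$ by an inflation--restriction decomposition in the basis $\{\psi_s,\eta_r\}$, killing the $\eta_r$-parts at the ramified primes $\mathfrak{p}_r$ and pinning the $\psi_s$-parts via the valuation criterion of Lemma \ref{lem:selmer_condtions_at_mult_places}(iii) at the $w_s$; and part (2) by subtracting the unique element of $\delta_{\kappa_0\lambda}(J[2])$ that trivialises ramification over $W'$. Two small imprecisions that do not affect the argument: over $W$ the character $\chi_{\kappa_0\lambda}$ is merely unramified rather than locally a square (which is all Remark \ref{rem:unramf_twist_of_mult} needs), and the claim that the $\eta_r$ restrict trivially at $w_s$ is neither needed nor justified in general, but is harmless since you have already established $y_r=0$ for all $r$ before invoking it.
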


\begin{proof}
(1). First note that, for each $i$, we have $\textbf{z}_i \in \textup{Sel}_2^{E}(J^{\kappa_0 \lambda})$. This follows immediately from the conditions on $\lambda$ and the fact that each $\textbf{z}_i$ is in $\textup{Sel}_2^{E}(J^{\kappa_0})$. (Note that for all $i$ and all $v\in W'$, we have $\res_v(\textbf{z}_i) = 1$.)

Next, take any $\alpha \in \textup{Sel}_2^{E}(J^{\kappa_0 \lambda})$ and represent it as a $2g + 1$ tuple $(\alpha_1, \dots,\alpha_{2g + 1})$ of elements of $K^{\times}/K^{\times 2}$. By assumption, each of the corresponding quadratic characters $\chi_{\alpha_i}$ factor through $\Gal(E/K)$. Fix $k \in \{1, \dots, 2g + 1\}$. Then we can write 
\[
\chi_{\alpha_k} = \prod_{l=1}^{4g^2 + 2g - 1} \psi_l^{\epsilon_{k,l}} \cdot \prod_{r = 1}^m \eta_r^{\epsilon_{k,r}'},
\]
for some $\epsilon_{k,l}, \epsilon_{k,r}'\in \{0,1\}.$ Take $s\in \{1, \dots, m\}$. Since $\chi_{\kappa_0 \lambda}$ is ramified at $\mathfrak{p}_s$, since $\mathfrak{p}_s \notin T$, and since $\alpha$ is unramified at $\mathfrak{p}_s$, the condition that $\alpha \in \textup{Sel}_2(J^{\kappa_0 \lambda})$ forces $\res_{\mathfrak{p}_s}(\alpha) = 1$ (cf.~Lemma \ref{lema:Selmer_conditions}). In particular, we have $\res_{\mathfrak{p}_s}(\chi_{\alpha_k}) = 1$. Now each character $\psi_l$ is trivial locally at $\mathfrak{p}_s$, as is $\eta_r$ for all $r\neq s$. On the other hand, $\res_{\mathfrak{p}_s}(\eta_s)$ is non-trivial. We conclude that $\epsilon_{k,s}' = 0$. Since $s$ and $k$ was chosen arbitrarily, we have $\epsilon_{k,r}' = 0$ for all $k,r$. 

Next, fix $s\in \{1, \dots, 4g^2 + 2g - 1\}$. Then we have 
\[
\res_{w_s}(\chi_{\alpha_k}) = \prod_{l=1}^{4g^2 + 2g - 1} \res_{w_s}(\psi_{l}^{\epsilon_{k,l}}) = \res_{w_s}(\psi_{s}^{\epsilon_{k,s}}).
\]
Now the character $\psi_s$ is ramified at $w_s$. Since $\alpha$ satisfies the Selmer conditions for $J^{\kappa_0}$ at $w_s$ (since $\lambda$ is trivial at $w_s$), we see from Lemma \ref{lem:selmer_condtions_at_mult_places}, Remark \ref{rem:unramf_twist_of_mult}, and the fact that $\kappa_0$ has even valuation at $w_s$, that we have $\epsilon_{k,s} = 0$ unless $k\in \{i_{s}, j_{s}\}$. Moreover, we see that $\epsilon_{i_s,s} = \epsilon_{j_s,s}$. Since this conclusion holds for all $s$, we have
\[
\alpha = \prod_{s=1}^{4g^2 + 2g - 1} \psi_{s}^{\epsilon_{i_s,s}} \cup D_{i_s, j_s} = \prod_{s=1}^{4g^2 + 2g - 1} \textbf{z}_{s}^{\epsilon_{i_s,s}},
\]
completing the proof.

(2). Take any $\alpha \in \textup{Sel}_2(J^{\kappa_0 \lambda})$. By Lemma \ref{lem:selmer_condtions_at_mult_places}, given any $i\in \{1, \dots, 2g\}$, precisely one of $\alpha$ and $\alpha \cdot \delta_{\kappa_0 \lambda}(D_i)$ is unramified at $w_i'$. Moreover, by Lemma \ref{descent_lemma}, for all $j \in \{1, \dots, 2g\}$ with $j \neq i$, we have that $\delta_{\kappa_0 \lambda}(D_j)$ is unramified at $w'_i$. From this we see that some element of the coset $\alpha \cdot \delta_{\kappa_0 \lambda}(J[2])$ is unramified over $W'$. This element lies in $\textup{Sel}_2^{E}(J^{\kappa_0 \lambda})$ by assumption. We now conclude from part (1).
\end{proof}

\begin{lemma} 
\label{lem:nice_chars_shrink}
Suppose that $\lambda$ is promising and that there exists an element $\alpha \in \textup{Sel}_2(J^{\kappa_0 \lambda})$ such that $\alpha$ is unramified over $W'$ and $\alpha \notin \textup{Sel}^E_2(J^{\kappa_0 \lambda})$. Then there is a promising element $\lambda'$ such that $\dim \textup{Sel}_2(J^{\kappa_0 \lambda'}) < \dim \textup{Sel}_2(J^{\kappa_0 \lambda})$.
\end{lemma}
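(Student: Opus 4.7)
The plan is to produce $\lambda' = \lambda\mu$, where $\mu \in O_K$ generates a product $\mathfrak{q}_1\mathfrak{q}_2$ of two Chebotarev-chosen primes and is a local square at every place of $\mathbf{T} \cup \{\mathfrak{p}_1, \ldots, \mathfrak{p}_m\}$, and then invoke Lemma \ref{lem:mazur_rubin_harpaz} with $X := \{\mathfrak{q}_1, \mathfrak{q}_2\}$ to extract the strict Selmer decrease. Writing $\chi = \chi_{\kappa_0\lambda}$, the first step is to find $\mathfrak{q}_1$. Since $\alpha \notin \textup{Sel}_2^E(J^{\kappa_0\lambda})$, the restriction $\beta := \alpha|_{G_E}$ is a non-trivial homomorphism $G_E \to J[2]$. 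Fixing a basis of $\textup{Sel}_2(J^{\kappa_0\lambda})$ containing $\alpha$ and applying the Chebotarev density theorem to the Galois closure over $K$ of the compositum of $E$, the quadratic extension cut out by $\chi$, and the extensions of $E$ cut out by the restrictions of the basis elements to $G_E$, one obtains a prime $\mathfrak{q}_1 \notin \mathbf{T} \cup \{\mathfrak{p}_1, \ldots, \mathfrak{p}_m\}$ that splits completely in $E/K$, has $\res_{\mathfrak{q}_1}(\alpha) \neq 0$, and on which all other basis elements have trivial restriction.

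Next, let $\mathfrak{m}$ be the formal product of $8$ and the places in $\mathbf{T} \cup \{\mathfrak{p}_1, \ldots, \mathfrak{p}_m\}$, with associated ray class field $K_\mathfrak{m}/K$. A second Chebotarev application, in a suitable extension containing $K_\mathfrak{m}$, $F$, and the splitting fields of all restrictions to $G_E$ of the basis of $\textup{Sel}_2(J^{\kappa_0\lambda})$, produces $\mathfrak{q}_2 \notin \mathbf{T} \cup \{\mathfrak{q}_1, \mathfrak{p}_1, \ldots, \mathfrak{p}_m\}$ with the following properties: $\mathfrak{q}_2$ splits in $F/K$ (so it is compatible with promising-ness); $\textup{Frob}_{\mathfrak{q}_2} = \textup{Frob}_{\mathfrak{q}_1}^{-1}$ in $\textup{Gal}(K_\mathfrak{m}/K)$ (so that $\mathfrak{q}_1\mathfrak{q}_2$ is principal with a generator $\mu$ that is a local square at every place of $\mathbf{T} \cup \{\mathfrak{p}_1, \ldots, \mathfrak{p}_m\}$); and every element of $\textup{Sel}_2(J^{\kappa_0\lambda})$ restricts trivially at $\mathfrak{q}_2$. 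Setting $\lambda' := \lambda\mu$, the promising conditions are routine to verify using that $\mu$ is a local square at $\mathbf{T} \cup \{\mathfrak{p}_1, \ldots, \mathfrak{p}_m\}$ and has odd valuation only at $\mathfrak{q}_1, \mathfrak{q}_2$, both of which split in $F$.

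Because $\chi_\mu$ is locally trivial outside $X$, the hypothesis of Lemma \ref{lem:mazur_rubin_harpaz} holds and one obtains
\[
\dim \textup{Sel}_2(J^{\kappa_0\lambda'}) - \dim \textup{Sel}_2(J^{\kappa_0\lambda}) = \dim V_X^{\chi_{\kappa_0\lambda'}} - \dim V_X^{\chi}.
\]
By the choice of $\mathfrak{q}_2$, every Selmer element of $J^\chi$ restricts trivially at $\mathfrak{q}_2$, so $V_X^\chi$ equals the image of $\textup{Sel}_2(J^\chi)$ under restriction at $\mathfrak{q}_1$, which by the choice of $\mathfrak{q}_1$ is one-dimensional and spanned by $\res_{\mathfrak{q}_1}(\alpha)$. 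The main obstacle, and the heart of the argument, is to show that $\dim V_X^{\chi_{\kappa_0\lambda'}} < \dim V_X^\chi = 1$. I expect this to follow by combining three ingredients: the disjointness $\delta(J^\chi(K_{\mathfrak{q}_i})) \cap \delta(J^{\chi_{\kappa_0\lambda'}}(K_{\mathfrak{q}_i})) = 0$ from Lemma \ref{lema:Selmer_conditions}(iii), which immediately forces $\alpha \notin \textup{Sel}_2(J^{\chi_{\kappa_0\lambda'}})$; the explicit formula for $\delta_{\chi_{\kappa_0\lambda'}}$ on $J[2]$ afforded by Lemma \ref{changes_under_twist_lemma}, which controls the contribution of the $2$-torsion classes to $V_X^{\chi_{\kappa_0\lambda'}}$; and Poitou--Tate reciprocity, which forces $V_X^\chi$ and $V_X^{\chi_{\kappa_0\lambda'}}$ to annihilate each other under the perfect local Tate pairing on $\bigoplus_{v \in X} H^1(G_{K_v}, J[2])$. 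Together these constraints pin down $V_X^{\chi_{\kappa_0\lambda'}}$ and yield the strict Selmer decrease.
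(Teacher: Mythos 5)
There is a genuine gap, and in fact the main claimed inequality goes the \emph{wrong} way. Your set $X = \{\mathfrak{q}_1, \mathfrak{q}_2\}$ consists entirely of good-reduction places at which $\chi := \chi_{\kappa_0\lambda}$ is unramified while $\chi' := \chi_{\kappa_0\lambda'}$ is ramified. Since $\textup{Sel}_2(J^{\chi'})$ always contains the $2g$-dimensional subgroup $\delta_{\chi'}(J[2])$, and since $\mathfrak{q}_1$ splits completely in $K(J[4])$ and $\chi'$ is ramified at $\mathfrak{q}_1$, Lemma \ref{changes_under_twist_lemma} together with Lemma \ref{lem:restriction_inertia} shows that $\res_{\mathfrak{q}_1}$ is injective on $\delta_{\chi'}(J[2])$ and lands in $\delta(J^{\chi'}(K_{\mathfrak{q}_1}))$. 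As $\delta(J^{\chi'}(K_{\mathfrak{q}_1}))\cap \delta(J^{\chi}(K_{\mathfrak{q}_1}))=0$ by Lemma \ref{lema:Selmer_conditions}(iii), the image of $\delta_{\chi'}(J[2])$ in the relevant quotient is $2g$-dimensional, whence $\dim V_X^{\chi'} \geq 2g$. Meanwhile, to make $V_X^\chi$ one-dimensional you need $\delta_\chi(J[2])$ to restrict trivially at $\mathfrak{q}_1$, which forces $\chi$ to be trivial there. But then $\dim \textup{Sel}_2(J^{\chi'}) - \dim \textup{Sel}_2(J^{\chi}) = \dim V_X^{\chi'} - \dim V_X^{\chi} \geq 2g - 1 > 0$: the Selmer rank \emph{increases}. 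The Poitou--Tate reciprocity you invoke only says $V_X^{\chi'}$ annihilates $V_X^{\chi}$ under the local pairing, which bounds $V_X^{\chi'}$ by the $(4g-1)$-dimensional annihilator of $V_X^\chi$; it cannot rescue the contribution forced on you by $\delta_{\chi'}(J[2])$.

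The paper's argument is structured precisely to neutralize this $2g$-dimensional torsion contribution. It takes $X = \{w_i', \mathfrak{q}_0, \mathfrak{q}_1\}$ where $w_i' \in W'$ is a \emph{multiplicative} place at which both twists remain unramified but their restrictions differ (so by Lemma \ref{lHarpazChange} that place contributes only $1$ to the bound in Lemma \ref{lem:mazur_rubin_harpaz}), and it chooses $\mathfrak{q}_0$ to be \emph{inert} in $K(\sqrt{\kappa_0\lambda})$ — not split, as you do. The inertness makes $\delta_\chi(J[2])$ restrict injectively at $\mathfrak{q}_0$, so that $V_X^\chi$ also picks up $2g$ dimensions from the $2$-torsion, matching the $\delta_{\chi'}(J[2])$ contribution to $V_X^{\chi'}$. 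The further requirement that $\mathfrak{q}_0$ is inert in $K(\sqrt{\alpha_i\alpha_{2g+1}})$ puts $\res_{\mathfrak{q}_0}(\alpha)$ into the image of $\delta_\chi(J[2])$ but with the property that the corresponding torsion element ramifies at $w_i'$ while $\alpha$ does not; this yields one extra independent dimension for $V_X^\chi$ via the ramified-quotient at $w_i'$. That gives $\dim V_X^{\chi} \geq 2g+1$ against a total bound of $4g+1$, hence $\dim V_X^{\chi'} \leq 2g$ and a strict Selmer decrease. Without a multiplicative place in $X$ and without forcing $\chi$ to be locally non-trivial at one of the new primes, there is no way to win the count.
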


\begin{proof}
We closely follow the proof of \cite{Har19}*{Proposition 4.10}. Let $\lambda$ and $\alpha$ be as in the statement of the lemma. 
Since $\alpha$ does not factor through $E/K$, one of the $\{\pm1\}$-valued quadratic characters $e(\alpha,D_{1,2g + 1}), \dots, e(\alpha, D_{2g,2g + 1})$ does not factor through $E/K$ (indeed, the points $D_{1, 2g + 1}, \dots, D_{2g, 2g + 1}$ form a basis for $J[2]$, so these characters determine $\alpha$). Choose $i \in \{1, \dots, 2g\}$ such that $e(\alpha, D_{i, 2g + 1})$ does not factor through $E/K$. Writing $\alpha$ as a tuple $(\alpha_1, \dots, \alpha_{2g + 1})$ of elements of $K^{\times}/K^{\times 2},$ the fixed field of the kernel of $e(\alpha, D_{i,2g + 1})$ is then equal to $K(\sqrt{\alpha_i\alpha_{2g + 1}})$.

Denote by $\textup{Ram}(\lambda)$ the set of places where $\lambda$ has odd valuation. Denote by $S$ a finite set of places containing $\mathbf{T}\cup \textup{Ram}(\lambda)$, chosen so that $S- W'$ contains a set of generators for the class group of $K$. This final assumption means that we can find a quadratic extension $L_i/K$ which is ramified at $w_i'$ and unramified outside $S$. 

Let $\mathfrak{q}_0 \notin S$ be a prime that splits completely in $E/K$, and is inert in both $K(\sqrt{\alpha_i\alpha_{2g + 1}})/K$ and $K(\sqrt{\kappa_0\lambda})/K$. The existence of such a prime is guaranteed by Remark \ref{rem:lin_disjoint} and the assumption that $e(\alpha, D_{i,2g + 1})$ does not factor through $E/K$. 

Let $\mathfrak{m}$ be the formal product of $8$ and all places $v \in S- \{w_i'\}$, and denote by $K_\mathfrak{m}/K$ the ray class field of conductor $\mathfrak{m}$. Choose a prime $\mathfrak{q}_1\notin S\cup \{\mathfrak{q}_0\}$ such that:
\begin{itemize}
 \item $\textup{Frob}_{\mathfrak{q}_1} = \textup{Frob}_{\mathfrak{q}_0}^{- 1}$ in $\textup{Gal}(FK_\mathfrak{m}/K)$,
 \item $\textup{Frob}_{\mathfrak{q}_1}\cdot \textup{Frob}_{\mathfrak{q}_0}$ is the non-trivial element of $\textup{Gal}(L_{i}/K)$.
\end{itemize} 
Such a prime exists since $FK_\mathfrak{m}/K$ is unramified at $w_i'$ while $L_{i}/K$ is ramified at $w_i'$.

By construction, the ideal $\mathfrak{q}_0\mathfrak{q}_1$ is principal, with a generator $b$ that is a square locally at all $v\in S- \{w_i'\}$. Thus $\chi_b$ is trivial at all $v \in S-\{w_i'\}$, and is ramified precisely at $\{\mathfrak{q}_0,\mathfrak{q}_1\}.$ Moreover, since $\mathfrak{q}_0$ splits completely in $F/K$, the same is true of $\mathfrak{q}_1$. Consequently, the element $\lambda' := \lambda b$ is promising. We will show that it satisfies the requirements of the statement. 
 
Write $L_i=K(\sqrt{\beta})$. By reciprocity we have 
\begin{eqnarray*}
1&=& \prod_{v \textup{ place of }K} (b, \beta)_v \\ 
&=& (b, \beta)_{w_i'} \cdot (b, \beta)_{\mathfrak{q}_0} \cdot (b, \beta)_{\mathfrak{q}_1} \\ &=&\chi_{b}(\textup{Frob}_{w_i'}) \cdot \chi_{\beta}(\textup{Frob}_{\mathfrak{q}_0}\textup{Frob}_{\mathfrak{q}_1}).
\end{eqnarray*}
Since $\textup{Frob}_{\mathfrak{q}_0}\textup{Frob}_{\mathfrak{q}_1}$ is non-trivial in $\textup{Gal}(L_{i}/K)$ by construction, we deduce that the restriction of $\chi_{b}$ to $w_i'$ is the unique non-trivial unramified character. 

For any $D \in J[2]$, we have 
\begin{equation} 
\label{local_cup_computation}
\res_{\mathfrak{q}_0} \delta_{\kappa_0 \lambda}(D) = \res_{\mathfrak{q}_0} \delta(D) \cdot \left(\res_{\mathfrak{q}_0}(\chi_{\kappa_0 \lambda}) \cup D\right) = \res_{\mathfrak{q}_0}(\chi_{\kappa_0 \lambda}) \cup D, 
\end{equation}
where the first equality is Lemma \ref{changes_under_twist_lemma}, and the second equality follows from the fact that $\mathfrak{q}_0$ splits completely in $K(J[4])/K$. By assumption, $\res_{\mathfrak{q}_0}(\chi_{\kappa_0 \lambda})$ is the unique non-trivial unramified quadratic character, hence $\res_{\mathfrak{q}_0} \delta_{\kappa_0 \lambda}(J[2])$ has dimension $2g$. In particular, we see that $\res_{\mathfrak{q}_0} \delta_{\kappa_0 \lambda}(J[2])$ generates $H^1_{\textup{ur}}(G_{K_{\mathfrak{q}_0}}, J[2])$. 

Now $\alpha$, as an element of $\textup{Sel}_2(J^{\kappa_0 \lambda})$, is unramified at $\mathfrak{q}_0$. Let $D \in J[2]$ be the unique element such that $\res_{\mathfrak{q}_0}(\alpha) = \res_{\mathfrak{q}_0}\delta_{\kappa_0 \lambda}(D)$. By construction, $\res_{\mathfrak{q}_0} e(\alpha,D_{i,2g + 1})$ is non-trivial, hence by \eqref{local_cup_computation} we have $e(D,D_{i,2g + 1}) =- 1$. In particular, when we write $D$ in the basis $D_1, \dots, D_{2g}$, the coefficient of $D_i$ is non-zero. It follows from Lemma \ref{lem:selmer_condtions_at_mult_places} that $\res_{w_i'}\delta_{\kappa_0 \lambda}(D)$ has non-trivial image in $\delta(J^{\kappa_0\lambda}(K_{w_i'}))/\delta(J^{\kappa_0\lambda}(K_{w_i'})) \cap H^1_{\textup{ur}}(G_{K_{w_i'}}, J[2])$,  while $\res_{w_i'}(\alpha)$ has trivial image in this quotient (by assumption, $\alpha$ is unramified at $w_i'$). Consequently, the image under restriction of $\left \langle \alpha, \delta_{\kappa_0 \lambda}(J[2]) \right \rangle$ in 
\[
H^1(G_{K_{\mathfrak{q}_0}}, J[2]) \oplus \frac{\delta(J^{\kappa_0\lambda}(K_{w_i'}))}{\delta(J^{\kappa_0\lambda}(K_{w_i'})) \cap H^1_{\textup{ur}}(G_{K_{w_i'}}, J[2])}
\]
is $2g + 1$ dimensional. We now apply Lemma \ref{lem:mazur_rubin_harpaz} with $X = \{w_i', \mathfrak{q}_0, \mathfrak{q}_1\}$. Combining the above discussion with Lemma \ref{lem:mazur_rubin_harpaz} and Lemma \ref{lHarpazChange}, we conclude that $\dim \textup{Sel}_2(J^{\kappa_0 \lambda'}) < \dim \textup{Sel}_2(J^{\kappa_0 \lambda})$, as desired.
\end{proof}

\begin{proof}[Proof of Theorem \ref{thm:almost_aux_twist}]
We first construct a promising element. Let $\mathfrak{m}$ be the formal product of $8$ and all places $v\in \textbf{T}$. Let $K_\mathfrak{m}$ denote the corresponding ray class field. With the primes $\mathfrak{p}_1, \dots, \mathfrak{p}_m$ as above, choose a prime $\mathfrak{p}_{m + 1} \notin \textbf{T}\cup \{\mathfrak{p}_1, \dots, \mathfrak{p}_{m}\}$ such that 
\[
\textup{Frob}_{\mathfrak{p}_{m + 1}} = \prod_{i=1}^{m}\textup{Frob}_{\mathfrak{p}_i}^{-1}
\]
in $\textup{Gal}(K_\mathfrak{m}F/K)$. Then $\mathfrak{p}_{m + 1}$ splits completely in $F/K$. Moreover, the ideal $\prod_{i=1}^{m + 1}\mathfrak{p}_i$ is principal, and has a generator $\lambda \in K^{\times}$ which is a square locally at all places $v\in \textbf{T}$. The element $\lambda$ is then promising. 

Inductively applying Lemma \ref{lem:nice_chars_shrink}, starting from the element $\lambda$ just constructed, we deduce the existence of a promising element $\lambda'$ such that every element $\alpha \in \textup{Sel}_2(J^{\kappa_0 \lambda'})$ that is unramified over $W'$ lies in $\textup{Sel}_2^E(J^{\kappa_0 \lambda'})$. By Lemma \ref{lem:systematic_subspace}, $\textup{Sel}_2(J^{\kappa_0 \lambda'})$ is generated by $\delta_{\kappa_0 \lambda'}(J[2])$ and $\textbf{z}_1, \dots, \textbf{z}_{4g^2 + 2g - 1}$. Moreover, as in Remark \ref{rem:lin_disjoint}, $\chi_{\kappa_0 \lambda'}$ is ramified at the prime $\mathfrak{p}_0$ afforded by Corollary \ref{cor:including_zs}, and all the $\textbf{z}_i$ have trivial restriction at this prime. The character $\chi := \chi_{\kappa_0 \lambda'}$ satisfies the requirements of the statement.
\end{proof}

\bibliography{bibliography}
\end{document}